\def\dive{\operatorname{div}}
\DeclareMathOperator*{\osc}{osc}
\numberwithin{equation}{section}
\newtheorem{theorem}{Theorem}[section]
\newtheorem{lemma}[theorem]{Lemma}
\newtheorem{definition}[theorem]{Definition}
\newtheorem{proposition}[theorem]{Proposition}
\newtheorem{remark}[theorem]{Remark}
\newtheorem{corollary}[theorem]{Corollary}
\newcommand{ \mint }{ {\int\hspace{-0.38cm}-}}
\begin{document}
	
	\title[\hfil Regularity theory for nonlocal equations\dots] {Regularity theory for nonlocal equations with general growth in the Heisenberg group}
	
	\author[Y. Fang and C. Zhang  \hfil \hfilneg]{Yuzhou Fang and  Chao Zhang$^*$}
	
	\thanks{$^*$Corresponding author.}
	
	\address{Yuzhou Fang \hfill\break School of Mathematics, Harbin Institute of Technology, Harbin 150001, China}
	\email{18b912036@hit.edu.cn}

	\address{Chao Zhang  \hfill\break School of Mathematics and Institute for Advanced Study in Mathematics, Harbin Institute of Technology, Harbin 150001, China}
	\email{czhangmath@hit.edu.cn}

	\subjclass[2020]{35D30, 35B45, 35B65, 47G20}
	\keywords{Interior regularity; nonlocal equations with general growth; Heisenberg group; expansion of positivity}
	
	\maketitle
	
	\begin{abstract}
	We deal with a wide class of generalized nonlocal $p$-Laplace equations, so-called nonlocal $G$-Laplace equations, in the Heisenberg framework. Under natural hypotheses on the $N$-function $G$, we provide a unified approach to investigate in the spirit of De Giorgi-Nash-Moser theory, some local properties of weak solutions to such kind of problems, involving boundedness, H\"{o}lder continuity and Harnack inequality. To this end, an improved nonlocal Caccioppoli-type estimate as the main auxiliary ingredient is exploited several times. 
	\end{abstract}
	
	\section{Introduction}
	\label{sec-0}
	
	The objective of this paper is to develop local regularity theory for the weak solutions of a very general class of nonlocal integro-differential problems with non-standard growth in the Heisenberg group $\mathbb{H}^n$. The kind of equations we are addressing are the following:
	\begin{equation}
		\mathcal{L}u(\xi)
	    =0\quad \text{in } \Omega
	    \label{main}
	\end{equation}
	with the operator $\mathcal{L}$ given by
	\begin{equation*}
		\mathcal{L}u(\xi):=\mathrm{P.V.}\int_{\mathbb{H}^n}
		g\left( \frac{|u(\xi)-u(\eta)|}{|\eta^{-1}\circ\xi|^s_{\mathbb{H}^n}}\right)
		\frac{u(\xi)-u(\eta)}{|u(\xi)-u(\eta)|}
		\frac{d\eta}{|\eta^{-1}\circ\xi|^{Q+s}_{\mathbb{H}^n}},
	\end{equation*}
	where $s\in (0,1)$, $\Omega$ is a bounded domain in $\mathbb{H}^n$ for $n\ge 1$, and $Q=2n+2$ stands for the homogeneous dimension of $\mathbb{H}^n$. Here the symbols $|\cdot|_{\mathbb{H}^n}$ and $\mathrm{P.V.}$ denote separately the standard Heisenberg norm and ``in the principal value sense". It is noteworthy that the function $g:\left[ 0,\infty\right) \rightarrow\left[ 0,\infty\right)$ is continuous and strictly increasing such that $g(0)=0$, $\lim\limits_{t\rightarrow\infty}g(t)=\infty$ and
	\begin{equation}
		1<p\le \frac{tg(t)}{G(t)}\le q<\infty\quad \text{with }  G(t):=\int_{0}^{t}g(\tau)d\tau,
		\label{02}
	\end{equation}
	where $G(\cdot)$ is an $N$-function that carries the $\Delta_2$ and $\nabla_2$ conditions (see Section \ref{sec-1}). Let us point out that the necessary condition \eqref{02} firstly appeared in the work of Lieberman \cite{Lie91}, where the author established the $C^{0,\alpha}$-continuity of weak solutions to \eqref{04} below. Several representative samples, satisfying the requirements above, incorporate the power case $g(t)=t^{p-1}$, the limiting case $g(t)=t^{p-1}\ln(e+t)$ as well as $g(t)=t^{p-1}+t^{q-1}$.

Let us first pay attention to the $p$-fractional subLaplacian equations, namely, the particular scenario that $g(t)=t^{p-1}$ in \eqref{main}, which arise from many distinguish contexts, such as phase transition problems, ferromagnetic analysis, image segmentation models, quantum mechanics and so on. Integro-differential equations of this type can be seen as a generalization of the fractional subLaplacian in the Heisenberg group at this time, whose integral explanation presented by \cite{RT16}, defined explicitly over the proper fractional Sobolev space $H^s(\mathbb{H}^n)$ with $s\in(0,1)$ as below,
\begin{equation}\label{03}
\left(-\Delta_{\mathbb{H}^n}\right)^su(\xi):=
C(n,s)\lim_{r\rightarrow0^+}\int_{{\mathbb{H}^n}\setminus B_r(\xi)} {\frac{{u\left( \xi  \right) - u\left( \eta  \right)}}{{| {{\eta ^{ - 1}} \circ \xi } |_{{\mathbb{H}^n}}^{Q + 2s}}}\,d\eta },   \quad \xi\in\mathbb{H}^n,
\end{equation}
where $C(n,s)$ is a positive constant. A series of theory on the fractional subLaplacian operators has been investigated during the past decade: Hardy and uncertainty inequalities on general stratified Lie groups \cite{CCR15}, Harnack estimates and H\"{o}lder results in Carnot groups \cite{FF15}, Sobolev and Morrey-type embeddings for fractional order Sobolev spaces \cite{AM18} together with Liouville-type theorems \cite{CT16}. We refer to \cite{FMPPS18,FGMT15,GT22,GT21} and references therein for more results in the linear situation.

For what concerns the general nonlinear counterpart of \eqref{03}, that is, the $p$-growth scenario for $p\in(1,\infty)$, several interesting properties, especially on the regularity theory, have been established gradually in very recent years. In this respect, we would like to mention that Manfredini, Palatucci, Picinini and Polidoro \cite{MPP23} demonstrated the interior boundedness along with H\"{o}lder continuity for the weak solutions of the fractional $p$-subLaplacian equations in the Heisenberg group $\mathbb{H}^n$. Correspondingly, nonlocal Harnack inequalities regarding this kind of problems were proven in \cite{PP22}, where the authors considered the asymptotic behaviour for fractional linear operators as well. In addition, when it comes to the obstacle problems related to the nonlocal $p$-subLaplacian, Picinini \cite{P22} obtained the solvability, semicontinuity, boundedness as well as H\"{o}lder continuity up to the boundary for weak solutions. To some extent, the aforementioned results extend the analogues of the fractional Euclidean framework in \cite{DKP14,DKP16,KKP16,KKP17} to the Heisenberg setting. Some extra fundamental functional inequalities and quantitative estimates could be found in \cite{KS18,KS20,PP23}.

The local counterpart of Eq. \eqref{main} constitutes naturally the classical quasilinear elliptic equations in divergence form with general growth
\begin{equation}
\label{04}
-\dive\left(g(|\nabla u|)\frac{\nabla u}{|\nabla u|}\right)=0 \quad\text{with } g(t)=G'(t),
\end{equation}
for which Mukherjee \cite{Muk23} showed, under the requirement \eqref{02}, the Harnack estimates, $C^{0,\alpha}$ and further $C^{1,\alpha}$ regularity of weak solutions in the setting of Heisenberg group. On the other hand, within the Euclidean framework, the so-called $G$-Laplace equation \eqref{04} have also been attracting a great deal of attention recently. The regularity for the $(p,q)$-growth scenario is initially developed by the celebrated works of Marcellini \cite{Mar89,Mar91}. Rather comprehensive literature discussed the relevant regularity theory on Eq. \eqref{04} or related functionals, for instance, \cite{BM20,HHL21,Bar15} and references therein. 
Along the regularity research on the (local) $G$-Laplace equations, the natural topics of regularity properties for the corresponding nonlocal equations in the Euclidean space have been considered intensively in recent years. For example, Giacomoni, Kumar and Sreenadh \cite{GKS22} derived, for the nonhomogeneous case $g(t)\thickapprox t^{p-1}+t^{q-1}$, Harnack inequality and global H\"{o}lder continuity; see also \cite{GKS23} for boundary regularity and Hopf type maximum principle together with strong comparison principle in the superquadratic case $2\le p,q<\infty$. Additionally, by means of De Giorgi classes, Chaker, Kim and Weidner \cite{CKW22} proved the interior H\"{o}lder continuity for the corresponding equations and functionals, nonetheless where they imposed more restricted conditions on $G$ such that $t^p\lesssim G(t)$ and $q<\frac{np}{n-sp}$ in \eqref{02}, and required $0<s_0\le s<1$ for the robustness of their results as $s\rightarrow1^-$; subsequently, in a different manner such local behaviour was established under the assumption \eqref{02} through applying the Caccioppoli-type inequality and logarithmic-type estimate in \cite{BKO23}. More recently, we \cite{FZ} concluded a nonlocal Harnack-type inequality for \eqref{main} with the structure preconditions \eqref{02} and $G(t\tau)\lesssim G(t)G(\tau)$ in the setup of Euclidean space, and then the papers \cite{CKW23,BKS} improved this result by removing the second condition and obtaining a full Harnack inequality. Concerning more abundant studies for the nonlocal problems exhibiting non-standard growth feature, encompassing also double phase case, one could refer to \cite{DeFP19,FZ23,BOS22,DeFM,SV22} and references therein.

As has been mentioned above, a lot of work concerning the nonlocal problems with general growth has been developed in the context of the Euclidean space, but not very much has been done in the framework of Heisenberg group. In particular, the authors  and Zhang  \cite{FZZ} focused on interior regularity for the nonlocal double phase equations in the Heisenberg situation. The aim of this paper is to develop a unified approach to the regularity theory of such problems that allows us to establish the boundedness, H\"{o}lder continuity and full Harnack inequalities for the weak solutions to Eq. \eqref{main} at the same time. Our argument is more direct and transparent than those used in the previous ones even in the Euclidean setting, which mainly relied on an energy estimate--Caccioppoli inequality--in Proposition \ref{A}, avoiding the use of any logarithmic-type estimates and only requiring the basic structural hypothesis \eqref{02}. Indeed, we have to overcome the challenges stemming not only from the inhomogeneous growth of the function $G$ not carried by the $p$-subLaplacian equations, but also from noneuclidean geometrical structure and nonlocal characteristic of the integro-differential operator $\mathcal L$. Therefore, more careful systematic analyses are needed to deal with the complexity from such a nonlocal problem with Orlicz growth.

Before presenting our main results, we introduce a tail space
$$
L^g_s(\mathbb{H}^n):=\left\{u:\mathbb{H}^n\rightarrow\mathbb{R} \text{ is measurable}:\int_{\mathbb{H}^n}g\left(\frac{|u(\xi)|}{(1+|\xi|_{\mathbb{H}^n})^s}\right)\frac{d\xi}{(1+|\xi|_{\mathbb{H}^n})^{Q+s}}<\infty\right\}
$$
and the corresponding nonlocal tail in the ball $B_r(\xi_0)$ is defined as
	\begin{equation*}
		\text{Tail}(u;\xi_0,r)=
		\int_{\mathbb{H}^n\setminus B_r(\xi_0)}
		g\left( \frac{|u(\xi)|}{|{\xi_0}^{-1}\circ\xi|^s_{\mathbb{H}^n}}\right)
		\frac{d\xi}{|{\xi_0}^{-1}\circ\xi|^{Q+s}_{\mathbb{H}^n}}.
	\end{equation*}
Observe that $\mathrm{Tail}(u;\xi_0,r)<\infty$ for any $\xi_0\in\mathbb{H}^n$ and $r>0$, when $u$ belongs to $L^g_s(\mathbb{H}^n)$. For no ambiguity, we stress $g^{-1}, G^{-1}$ mean separately the inverses of $g, G$. 

Now we are in a position to state in turn the main contributions of our work. Throughout the forthcoming three theorems we always suppose the condition \eqref{02} is in force. The first result describes the local boundedness of weak solutions.

\begin{theorem}[Local boundedness]
		\label{C}
		Let $u\in H\mathbb{W}^{s,G}(\Omega)\cap L^g_s(\mathbb{H}^n)$ be a weak subsolution of \eqref{main} and $B_r:=B_r(\xi_0)\subset\subset\Omega$. Then for every $\delta\in(0,1]$ it holds that
		\begin{equation*}
			\sup_{B_{r/2}}u\le Cr^sG^{-1}\left( \delta^{\frac{\theta}{1-\theta}}\mint_{B_{r}}G\left( \frac{u_+}{r^s}\right) d\xi\right)
			+r^sg^{-1}\left( \delta(r/2)^s\mathrm{Tail}(u_+;\xi_0,r/2)\right),
		\end{equation*}
		where $\theta>1$ is from Lemma \ref{B} and $C>0$ depends only on $n,p,q,s$.
	\end{theorem}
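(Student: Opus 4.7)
The plan is a De~Giorgi level-set iteration, driven by the Caccioppoli estimate of Proposition~\ref{A} and the fractional Sobolev-type embedding of Lemma~\ref{B}. Set the shrinking radii $r_j := (1 + 2^{-j-1})r/2$, so that $r_0 = r$ and $r_j \searrow r/2$; pick cutoffs $\phi_j$ with $\mathrm{supp}\,\phi_j \subset B_{(r_j+r_{j+1})/2}$, $\phi_j \equiv 1$ on $B_{r_{j+1}}$ and horizontal gradient bounded by $C\,2^j/r$; and use truncation levels $k_j := k(1 - 2^{-j})$, $j \ge 0$, where
$$k := C r^s G^{-1}\Big(\delta^{\frac{\theta}{1-\theta}}\mint_{B_r} G\Big(\frac{u_+}{r^s}\Big)d\xi\Big) + r^s g^{-1}\big(\delta(r/2)^s \mathrm{Tail}(u_+;\xi_0, r/2)\big)$$
is the candidate upper bound. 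Writing $w_j := (u-k_j)_+$ and
$$A_j := \mint_{B_{r_j}} G\Big(\frac{w_j}{r^s}\Big) d\xi, \qquad \mu_j := \frac{|\{w_j > 0\} \cap B_{r_j}|}{|B_{r_j}|},$$
the goal is to show $A_j \to 0$, which forces $\sup_{B_{r/2}} u \le k$ and concludes the proof.

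Since $u$ is a subsolution and $k_j \ge 0$, Proposition~\ref{A} applied to $w_j$ on the pair $(B_{r_j}, B_{r_{j+1}})$ with cutoff $\phi_j$ yields a Gagliardo-type $G$-modular bound for $w_j \phi_j$ by a local term of order $2^{Cj} A_j$ plus a nonlocal tail. The tail, via the Heisenberg-distance comparison $|\xi^{-1}\circ\eta|_{\mathbb{H}^n} \asymp |\xi_0^{-1}\circ\eta|_{\mathbb{H}^n}$ for $\xi \in B_{r_{j+1}}$ and $\eta \notin B_{r_j}$, is dominated by a scale-invariant multiple of $(r/2)^s\mathrm{Tail}(u_+;\xi_0, r/2)$; the second summand of $k$, together with the monotonicity of $g$, absorbs this contribution once and for all. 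Inserting the resulting seminorm bound into Lemma~\ref{B} produces, for some fixed $\theta > 1$,
$$\Big(\mint_{B_{r_{j+1}}} G^\theta\!\big(w_j/r^s\big) d\xi\Big)^{1/\theta} \le C\, 2^{Cj}\big(A_j + G(k/r^s)\,\mu_j\big).$$

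H\"{o}lder's inequality combined with Chebyshev and the $\nabla_2$ side of \eqref{02} (using that $w_j \ge k\,2^{-(j+1)}$ on $\{w_{j+1}>0\}$, hence $G(w_j/r^s) \gtrsim 2^{-q(j+1)} G(k/r^s)$ there) yields $\mu_{j+1} \le C\,2^{q j}\, A_j/G(k/r^s)$, and plugging back gives a closed recursion of De~Giorgi type, schematically
$$A_{j+1} \le C\,2^{C j}\, G(k/r^s)^{1-\theta}\,A_j^{\theta}$$
up to sub-leading terms. The first summand in the definition of $k$ is tailored so that the normalized sequence $A_j/G(k/r^s)$ satisfies the smallness hypothesis of the classical fast geometric iteration lemma; a direct bookkeeping of the exponents, balancing the Sobolev gain $\theta$ against the powers of $G(k/r^s)$ in the recursion, extracts precisely the factor $\delta^{\theta/(1-\theta)}$ in front of the excess $\mint_{B_r} G(u_+/r^s) d\xi$. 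Thus $A_j \to 0$ and the claimed bound follows.

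The main obstacle I anticipate is the \emph{uniform tail absorption} across the iteration: because $w_j = u - k_j$ does not decay with $j$ outside $B_{r_j}$, the tail term produced by Proposition~\ref{A} at level $j$ stays of the size of $\mathrm{Tail}(u_+;\xi_0, r/2)$ and so must be absorbed globally through the choice of $k$ rather than at each step. The splitting of the prospective bound into a $G^{-1}$-piece and a $g^{-1}$-piece, coupled via the free parameter $\delta$, is the Young-type mechanism that achieves this; making the absorption compatible with the nonlocal Gagliardo seminorm and with the $\Delta_2$-$\nabla_2$ framework while avoiding any logarithmic-type estimate is the most delicate point, and is also what distinguishes the present approach from the earlier nonlocal $G$-Laplace analyses.
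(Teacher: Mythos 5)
Your proposal follows essentially the same strategy as the paper: a De~Giorgi level-set iteration driven by Proposition~\ref{A} and Lemma~\ref{B}, with the tail absorbed once and for all through the second summand of $k$ and the factor $\delta^{\theta/(1-\theta)}$ extracted from the fast-geometric-convergence threshold exactly as you describe. The only cosmetic differences are that the paper normalizes directly to $Y_i = \mint_{B_i}G(w_i/r_i^s)\,d\xi / G(k/r^s)$ (your $A_j/G(k/r^s)$) rather than tracking $A_j$ and $\mu_j$ separately, and it uses intermediate levels $\tilde k_i$ between $k_i$ and $k_{i+1}$ to carry out the Chebyshev step; also note your radii $r_j=(1+2^{-j-1})r/2$ give $r_0=3r/4$, not $r$ as claimed -- the paper's choice $r_i=(1+2^{-i})r/2$ is the one that starts at $r$.
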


In the theorem above, the presence of the parameter $\delta$ allows an interpolation between the local and nonlocal terms, which plays a prominent role in the proof of Harnack inequality below. The second one is the desired interior H\"{o}lder regularity for the weak solutions.	

\begin{theorem}[H\"{o}lder continuity]
		\label{thm2}
		Let $u\in H\mathbb W^{s,G}(\Omega)\cap L^g_s(\mathbb{H}^n)$ be a weak solution to \eqref{main} with local boundedness. Then $u$ is locally of the class $C^{0,\alpha}(\Omega)$ with some $\alpha\in(0,1)$ determined only by the structural constants $n,p,q,s$. To be precise, for any ball $B_R(\xi_0)\subset\subset\Omega$, there exists a $C>1$ depending also on $n,p,q,s$ such that
		\begin{equation*}
			\osc\limits_{B_r(\xi_0)}u\le C\left(\frac{r}{R}\right)^\alpha \left[\sup_{B_R(\xi_0)}|u|
			+R^sg^{-1}(R^s\mathrm{Tail}(u;\xi_0,R))\right]
		\end{equation*}
		with $r\in(0,R]$.
\end{theorem}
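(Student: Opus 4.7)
The plan is to prove the oscillation decay by a De Giorgi-style iteration scheme along a sequence of shrinking concentric balls and then extract the Hölder exponent by scale-to-scale interpolation. Fix $\xi_0 \in \Omega$ and $R > 0$ with $B_R(\xi_0) \subset\subset \Omega$. For a dilation constant $\sigma > 2$ to be determined, set $r_j := \sigma^{-j}R$ and $B_j := B_{r_j}(\xi_0)$, and introduce a candidate oscillation sequence $\omega_j$ initialized so that $\omega_0$ dominates $\sup_{B_R}|u| + R^s g^{-1}(R^s \mathrm{Tail}(u;\xi_0,R))$. The goal is an inductive claim $\osc_{B_j} u \leq \omega_j$ with $\omega_{j+1} = \lambda \omega_j$ for a fixed $\lambda \in (0,1)$ independent of $j$. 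Once established, inverting $r = \sigma^{-j} R$ converts this to $\osc_{B_r} u \leq C(r/R)^\alpha \omega_0$ with $\alpha = -\log \lambda / \log \sigma$, which is exactly the stated Hölder estimate.

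The core inductive step is the reduction of oscillation from $B_j$ to $B_{j+2}$. Assuming $\osc_{B_j} u \leq \omega_j$, set $v := u - \inf_{B_j} u \geq 0$ on $B_j$. A dichotomy on the midrange level $\omega_j/2$ in $B_{j+1}$ gives, after possibly replacing $u$ by $-u$, that $v \geq \omega_j/2$ on a set of measure at least $|B_{j+1}|/2$. The heart of the proof is then an expansion-of-positivity statement: there exists $\nu = \nu(n,p,q,s) \in (0,1)$ such that
\[
v \geq \nu \omega_j \quad \text{on } B_{j+2}.
\]
To establish this, I would apply the Caccioppoli-type estimate of Proposition \ref{A} to the truncations $(v - 2^{-\ell}\omega_j)_-$ on a sequence of shrinking concentric radii between $r_{j+1}$ and $r_{j+2}$, feed the output into the fractional Sobolev–Poincaré-type embedding supplied by Lemma \ref{B} (which converts $L^G$-energy into $L^{G\theta}$-mass with the gain $\theta > 1$), and iterate geometrically à la De Giorgi. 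When the initial sublevel measure is small enough (which is the density bound we just proved), the iteration drives $(v - \nu\omega_j)_-$ to zero on $B_{j+2}$. Combined, this yields $\osc_{B_{j+2}} u \leq (1-\nu)\omega_j$, providing the required decay with $\lambda = (1-\nu)^{1/2}$ (after relabeling the index).

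The main technical obstacle is the scale-wise bookkeeping of the nonlocal tail. Each application of the Caccioppoli inequality on $B_{j+1}$ produces a tail contribution of the shape
\[
r_{j+1}^s g^{-1}\!\left(r_{j+1}^s \mathrm{Tail}((u - k\omega_j)_-;\xi_0,r_{j+1})\right),
\]
and absorbing this into $\nu\omega_j$ uniformly in $j$ is non-trivial because $\omega_j$ decays geometrically while the kernel $|\xi_0^{-1}\circ\xi|_{\mathbb{H}^n}^{-Q-s}$ decays only polynomially. The remedy is to split the tail into an annular part (covered by induction via $\sum_k \lambda^k < \infty$) and a far-field part (bounded by the initial tail entering $\omega_0$), using the $\Delta_2$/$\nabla_2$ conditions on $G$ together with \eqref{02} to transfer $g$-weighted integrals between scales and invert $g^{-1}$ cleanly. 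Once the tail is absorbed, the dyadic recursion $\omega_{j+1} \leq \lambda\omega_j$ runs without interruption, and unwinding in terms of $r \in (0,R]$ yields the desired Hölder decay with the explicit dependence on $\sup_{B_R}|u|$ and $\mathrm{Tail}(u;\xi_0,R)$.
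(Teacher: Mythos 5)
You identify the correct high-level architecture — shrinking concentric balls, measure dichotomy, De Giorgi iteration, and a split of the tail into an inductively-controlled annular part plus a far-field part — and this matches the paper's proof at a structural level. However, there is a genuine gap in your core step that would prevent the argument from running.

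After the dichotomy you have, say, $|\{v\ge \omega_j/2\}\cap B_{j+1}|\ge \tfrac12|B_{j+1}|$, i.e.\ the \emph{sub}level set $\{v<\omega_j/2\}$ has relative measure at most $1/2$. You then claim that this is ``small enough'' to launch a De Giorgi iteration driving $(v-\nu\omega_j)_-$ to zero. It is not: the De Giorgi measure-to-pointwise lemma (the paper's Lemma~\ref{E}) requires the initial relative measure $Y_0$ of the sublevel set to lie below a structural threshold $\mu\ll1$ determined by the Sobolev exponent $\theta$ and the geometric growth of the constants, and a bound $Y_0\le 1/2$ is in general far too weak for the recursion $Y_{i+1}\le Cb^iY_i^\theta$ to close. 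Lowering the target level to $\nu\omega_j$ (or iterating levels $2^{-\ell}\omega_j$) does not help, because the sublevel set at a lower level is still contained in $\{v<\omega_j/2\}$, and the recursion's coefficient still grows geometrically in $\ell$, so you cannot avoid needing $Y_0$ small.

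The missing ingredient is the intermediate measure-shrinking step, the paper's Lemma~\ref{D}: given $|\{v\ge\gamma\omega\}\cap B_{2r}|\ge\beta|B_{2r}|$, one can prove $|B_{2r}\cap\{v\le\sigma\gamma\omega/2\}|\le\frac{C\sigma^{p-1}}{\beta}|B_{2r}|$, which \emph{does} become arbitrarily small as $\sigma\downarrow0$. Crucially this step is \emph{not} another application of the fractional Sobolev inequality; it isolates and uses the second ``nonlocal positivity'' term on the left-hand side of Proposition~\ref{A}, namely $\int_{B_\rho}w_-(\xi)\int g\bigl(w_+(\eta)/|\eta^{-1}\circ\xi|^s_{\mathbb H^n}\bigr)\,\frac{d\eta}{|\cdot|^{Q+s}}\,d\xi$, whose bilinear structure bounds from below a product of the sublevel and superlevel measures. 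This is exactly the device that lets the paper avoid any logarithmic-type estimate (an explicit design choice stated in the introduction). Your proposal never invokes this term, so once the dichotomy is done your iteration has no legitimate starting point. Everything else — the careful bookkeeping of the tail with the geometric constraint on $\lambda$ in terms of $\tau,p,q,s$, and the final conversion to a power-law decay — matches the paper, so this is a single but essential omission rather than a global misconception.
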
	

Putting together Theorems \ref{C} and \ref{thm2}, we could have the H\"{o}lder continuity under the natural condition \eqref{02} (without boundedness assumption). We would like to mention that in order to get the oscillation reduction, the pivotal difficulties consist in controlling properly the nonlocal tail of solutions at every iteration step, which is a new element carried by the nonlocality of the operator $\mathcal{L}$, compared to the local case. This demands us to construct very carefully the sequence of shrinking balls $\{B_{r_j}(\xi_0)\}$ and the geometric sequence $\{\omega_j\}$ such that
\begin{equation}\label{05}
\osc\limits_{B_{r_j}(\xi_0)}u\le\omega_j.
\end{equation}
Generally speaking, if the relation \eqref{05} is inferred, we shall show the accurate estimate of tail
\begin{equation}\label{06}
r^s_j\mathrm{Tail}((u-\nu^\pm_j)_\pm;\xi_0,r_j)\le g\left(C\frac{\omega_j}{r^s_j}\right)
\end{equation}
with $\nu_j^+,\nu^-_j$ representing the supremum and infimum of $u$ in $B_{r_j}(\xi_0)$ respectively. Further, the inequality \eqref{06} leads to the improvement of oscillation in $B_{r_{j+1}}(\xi_0)$, and we can continue this process step by step. The last one provides a full Harnack inequality for Eq. \eqref{main}.
	
\begin{theorem}[Harnack inequality]
			\label{K}
			Let $u\in H\mathbb W^{s,G}(\Omega)\cap L^g_s(\mathbb{H}^n)$ be a weak solution to \eqref{main}. For any ball $B_{4R}(\xi_0)\subset\subset\Omega$, let also $u$ be nonnegative in $B_{4R}(\xi_0)$. Then we infer the following estimate
			\begin{equation}\label{07}
				\sup_{B_R(\xi_0)}u\le C\left( \inf_{B_R(\xi_0)}u+R^sg^{-1}\left( R^s\mathrm{Tail}(u_-;\xi_0,R)\right) \right)
			\end{equation}
			with the constant $C>0$ depending only on $n,p,q,s$.
\end{theorem}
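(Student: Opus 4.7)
The proof will follow the De Giorgi–Moser recipe for nonlocal Harnack inequalities. The sup estimate of Theorem \ref{C}, with its built-in interpolation parameter $\delta$ that balances local energy against the nonlocal tail, supplies one half of the picture; the complementary half is an expansion-of-positivity lemma for nonnegative supersolutions. As a preliminary reduction, since $\mathcal L$ annihilates additive constants I would shift $u \mapsto \tilde u := u + d$ with $d := R^s g^{-1}(R^s \text{Tail}(u_-;\xi_0,R))$, so that $\tilde u \ge 0$ in $B_{4R}$ and its negative part carries a tail controlled by $d$. It then suffices to prove $\sup_{B_R}\tilde u \le C \inf_{B_R}\tilde u$, and a scaling argument reduces to $R = 1$.

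The core new lemma to establish is the expansion of positivity: whenever $\tilde u \ge 0$ is a weak supersolution on $B_4$ and
\begin{equation*}
|\{\tilde u \ge h\} \cap B_r(\xi)| \ge \mu\,|B_r(\xi)|, \qquad B_{6r}(\xi) \subset B_4,
\end{equation*}
then $\tilde u \ge \sigma h$ in $B_{2r}(\xi)$ for some $\sigma = \sigma(n,p,q,s,\mu) \in (0,1)$, modulo a tail correction already absorbed by the constant $d$ of the reduction. I would prove this by applying the Caccioppoli inequality of Proposition \ref{A} to truncations $(h_k - \tilde u)_+$ at geometrically decreasing levels $h_k = \sigma_k h$ and running a De Giorgi iteration; the density hypothesis furnishes the initial seed, while both inequalities in \eqref{02} ensure that the recursion constants on the level-set measures are independent of the amplitude $h$.

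With this in hand, the loop closes as follows. Set $M := \sup_{B_1}\tilde u$ and pick $\xi_1 \in \overline{B_1}$ with $\tilde u(\xi_1)$ close to $M$. Applying Theorem \ref{C} to $\tilde u$ on a small ball $B_\rho(\xi_1)$ with universal $\rho$, the quantity $\text{Tail}(\tilde u_+;\xi_1,\rho/2)$ splits into an interior piece (controlled by $g(M/\rho^s)$, using the nonnegativity of $\tilde u$ on $B_4$ together with dyadic chaining from $B_1$ to $B_4$) and an exterior piece (controlled by $\text{Tail}(u_-;\xi_0,R)$ after the shift). Choosing $\delta = \delta(n,p,q,s)$ sufficiently small absorbs the resulting nonlocal contribution into $\tfrac12 M$, and the remaining local term forces $|\{\tilde u \ge \kappa M\} \cap B_\rho(\xi_1)| \ge \mu_0\,|B_\rho(\xi_1)|$ for universal $\kappa,\mu_0$. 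Feeding this into the expansion-of-positivity lemma produces $\tilde u \ge \sigma\kappa M$ throughout $B_1$, which after unwinding the reduction is \eqref{07}.

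The delicate point is the expansion of positivity. Because $G$ is inhomogeneous, the Caccioppoli energy does not scale homogeneously in the truncation level $h$, and both bounds in \eqref{02} must be invoked quantitatively to make the iteration constants independent of $h$. Since the authors insist on avoiding logarithmic-type estimates, every nonlocal tail generated by Proposition \ref{A} inside the iteration must be absorbed via precisely the $\delta$ mechanism of Theorem \ref{C}; the success of the whole scheme hinges on this quantitative balance. A further mild complication compared to the Euclidean case is that the chaining required to extend the pointwise lower bound from $B_{2r}(\xi)$ across all of $B_1$ must respect the noncommutative structure of $\mathbb{H}^n$, so the covering by Heisenberg balls needs some extra care.
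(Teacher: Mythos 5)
Your proposal takes a genuinely different route from the paper. The paper proves a \emph{weak Harnack inequality} (Lemma~\ref{I}) first, deriving it from an expansion-of-positivity lemma (Lemma~\ref{G}) amplified by the Krylov--Safonov covering lemma (Lemma~\ref{H}); the full Harnack inequality then follows by combining the sup bound of Theorem~\ref{C} with an $L^{\epsilon_0}$--$L^q$ interpolation (via Young's inequality and Lemma~\ref{3.5}), the tail estimate of Lemma~\ref{J}, and the Giaquinta--Giusti iteration (Lemma~\ref{2.7Lemma}). You instead try to bypass the weak Harnack inequality entirely: pick a near-maximizing point $\xi_1$, derive a density estimate for the superlevel set $\{\tilde u \ge \kappa M\}$ in a small ball $B_\rho(\xi_1)$ directly from Theorem~\ref{C}, and then propagate positivity across $B_1$ by chaining. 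This is a legitimately distinct strategy, and the shift $\tilde u = u + d$ is a clean device the paper does not use (it achieves the same effect through Lemma~\ref{J}).

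However, as stated there are two genuine gaps. The first is in the step from the averaged energy bound to the measure estimate. After absorbing the tail you obtain $\mint_{B_\rho(\xi_1)} G(\tilde u_+/\rho^s)\,d\xi \gtrsim G(M/\rho^s)$, but to conclude $|\{\tilde u \ge \kappa M\} \cap B_\rho(\xi_1)| \ge \mu_0 |B_\rho|$ you need an upper bound on $\tilde u$ over \emph{all} of $B_\rho(\xi_1)$, and this is not available: since $\xi_1$ can sit near $\partial B_1$, the ball $B_\rho(\xi_1)$ protrudes out of $B_1$, where $\tilde u$ is only known to be nonnegative and may exceed $M$ by an uncontrolled amount. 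The paper resolves exactly this difficulty through the absorption/iteration machinery of Lemma~\ref{2.7Lemma} applied to $f(\sigma) = \sup_{B_{\sigma R}} u$; your proposal does not invoke anything playing this role. The second gap is the chaining of your expansion-of-positivity lemma across $B_1$: in the nonlocal setting, each shift of the base ball changes the relevant tail $\mathrm{Tail}(\tilde u_-;\xi,r)$, so the dichotomy hypothesis of the lemma must be re-verified at every link of the chain, and this is not automatic even though $\tilde u_- \le u_-$ after the shift. The paper's Krylov--Safonov route (Lemma~\ref{H}) deliberately avoids moving the ball, improving the density from $\beta^k$ to $1$ while keeping the center and radius fixed, precisely to sidestep this issue. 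Both gaps are plausibly repairable, but they are the crux of the nonlocal Harnack proof and cannot be left to "unwinding the reduction."
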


Two direct components of this result are the upper bound estimate (Theorem \ref{C}) and the weak Harnack inequality (Lemma \ref{I}), where the latter was derived by the so-called expansion of positivity in Section \ref{sec-3} and Krylov-Sofonov covering lemma. Observe that, whenever the weak solutions of \eqref{main} are nonnegative in the whole space $\mathbb{H}^n$, \eqref{07} is reduced to the standard elliptic Harnack inequality that is however showed to fail if nonnegativity of solution is only assumed in local domain in \cite{Kas}. Besides, if $g(t)=t^{p-1}$, i.e., Eq. \eqref{main} turns into the fractional $p$-subLaplacian, then the nonlocal term
$$
R^sg^{-1}( R^s\mathrm{Tail}(u_-;\xi_0,R)=\left(R^{sp}\int_{\mathbb{H}^n\setminus B_R(\xi_0)}
		\frac{u^{p-1}_-(\xi)}{|{\xi_0}^{-1}\circ\xi|^{Q+ps}_{\mathbb{H}^n}}\,d\xi\right)^{\frac{1}{p-1}}.
$$
At this stage, our result \eqref{07} is simplified to the Harnack inequality obtained in \cite{PP22}.

The paper is organized as follows. In Section \ref{sec-1}, we give the notion of weak solutions to Eq. \eqref{main}, and collect some notations and auxiliary tools to be used later. Section \ref{sec-2} is devoted to deducing the improved Caccioppoli estimate and then the local boundedness, while we show H\"{o}lder continuity for weak solutions in Section \ref{sec-3}. Finally, we prove the Harnack inequality in Section \ref{sec-4}.

	\section{Preliminaries}
	\label{sec-1}
	
	In this section, we shall give some basic inequalities, state the notions of some functional spaces and weak solutions, and then provide an iteration lemma.

The Euclidean space ${{\mathbb{R}}^{2n + 1}}\;(n \ge 1)$ with the group multiplication
\[\xi \circ \eta = \left( {{x_1} + {y_1},{x_2} + {y_2}, \cdots ,{x_{2n}} + {y_{2n}},\tau + \tau' + \frac{1}{2}\sum\limits_{i = 1}^n {\left( {{x_i}{y_{n + i}} - {x_{n + i}}{y_i}} \right)} } \right),\]
where $\xi = \left( {{x_1},{x_2}, \cdots ,{x_{2n}},\tau} \right),$ $\eta = \left( {{y_1},{y_2}, \cdots ,{y_{2n}},\tau'} \right) \in {{\mathbb{R}}^{2n+1}},$ leads to the Heisenberg group ${{\mathbb{H}}^n}$. The left invariant vector field on ${{\mathbb{H}}^n}$ is of the form
\begin{equation*}\label{eq112}
  {X_i} = {\partial _{{x_i}}} - \frac{{{x_{n + i}}}}{2}{\partial _\tau},\;{X_{n + i}} = {\partial _{{x_{n + i}}}} + \frac{{{x_i}}}{2}{\partial _\tau},\quad 1 \le i \le n
\end{equation*}
and a non-trivial commutator is
\begin{equation*}\label{eq113}
  T = {\partial _\tau} = \left[ {{X_i},{X_{n + i}}} \right] = {X_i}{X_{n + i}} - {X_{n + i}}{X_i},~1 \le i \le n.
\end{equation*}
We call that ${X_1},{X_2}, \cdots ,{X_{2n}}$ are the horizontal vector fields on ${{\mathbb{H}}^n}$ and $T$ the vertical vector field. For a smooth function $u$ on ${{\mathbb{H}}^n}$, the subgradient is defined by
\begin{equation*}\label{eq114}
{\nabla _{\mathbb H^n}}u = \left( {{X_1}u,{X_2}u, \cdots ,{X_{2n}}u} \right).
\end{equation*}

The Haar measure in ${{\mathbb{H}}^n}$ is equivalent to the Lebesgue measure in ${{\mathbb{R}}^{2n+1}}$. We denote by $\left| E \right|$ the Lebesgue measure of a measurable set $E \subset {{\mathbb{H}}^n}$. If $f\in L^1(E)$ and $E\subset\mathbb{H}^n$ is a measurable subset with positive measure $0<|E|<\infty$, we write
	$$
	(f)_E:=\mint_Ef(\xi)\,d\xi=\frac{1}{|E|}\int_Ef(\xi)\,d\xi.
	$$
For $\xi =( {{x_1},{x_2}, \cdots ,{x_{2n}},\tau})\in\mathbb{H}^n$, we define its standard homogeneous norm as
\[{|\xi|_{{{\mathbb{H}}^n}}} = {\left( {{{\left( {\sum\limits_{i = 1}^{2n} {{x_i}^2} } \right)}^2} + {\tau^2}} \right)^{\frac{1}{4}}}.\]
We denote the ball $B_r(\xi_0)$ with center $\xi_0\in\mathbb{H}^n$ and radius $r>0$ by
\begin{equation*}
  {B_r }\left( \xi_0 \right) = \left\{ {\xi \in {{\mathbb{H}}^n}:|\xi_0^{ - 1}\circ \xi |_{\mathbb{H}^n}< r } \right\}.
\end{equation*}
Whenever not important or clear from the context, we shall drop the center as follows: $B_r:=B_r( \xi_0)$.

	The function $G:[0,\infty)\rightarrow[0,\infty)$ is an $N$-function, if it is convex, increasing, and fulfills that
	$$
	G(0)=0, \quad \lim_{t\rightarrow0+}\frac{G(t)}{t}=0 \quad\text{and} \quad \lim_{t\rightarrow\infty}\frac{G(t)}{t}=\infty.
	$$
	The conjugate function of this $G$ is given as
	$$
	G^*(t)=\sup_{\tau\geq0}\{\tau t-G(\tau)\}.
	$$
   From \eqref{02}, we provide several known inequalities that are utilized later:
	\begin{itemize}
		\item[($a$)] for $t\in[0,\infty)$,
		\begin{equation}
		\label{1-1}
		\begin{cases}
		a^qG(t)\leq G(at)\leq a^pG(t) & \text{\textmd{if }} a\in(0,1),\\[2mm]
		a^pG(t)\leq G(at)\leq a^qG(t) & \text{\textmd{if }} a\in(1,\infty)
		\end{cases}
		\end{equation}
		and
		\begin{equation}
		\label{1-2}
		\begin{cases}
		a^{p'}G^*(t)\leq G^*(at)\leq a^{q'}G^*(t) & \text{\textmd{if }} a\in(0,1),\\[2mm]
		a^{q'}G^*(t)\leq G^*(at)\leq a^{p'}G^*(t) & \text{\textmd{if }} a\in(1,\infty),
		\end{cases}
		\end{equation}
		where $p',q'$ are the H\"{o}lder conjugates of $p,q$.
		
		\item[($b$)] Young's inequality with $\epsilon\in(0,1]$
		\begin{equation}
		\label{1-3}
		t\tau\leq \epsilon^{1-q}G(t)+\epsilon G^*(\tau), \quad t,\tau\geq0.
		\end{equation}
		
		\item[($c$)] for $t,\tau\geq0$,
		\begin{equation}
		\label{1-4}
		G^*(g(t))\leq (q-1)G(t),
		\end{equation}
		and
		\begin{equation}
		\label{1-5}
		2^{-1}(G(t)+G(\tau))\leq G(t+\tau)\leq 2^{q-1}(G(t)+G(\tau)).
		\end{equation}
		
	\end{itemize}
	
	Moreover, the $N$-function $G$ meets $\Delta_2$ and $\nabla_2$ conditions (see \cite[Proposition 2.3]{MR08}):
	\begin{itemize}
		
		\item[($\Delta_2$)] there exists $\mu>1$ such that $G(2t)\leq \mu G(t)$ for $t\geq0$;
		
		\smallskip
		
		\item[($\nabla_2$)] there exists $\nu>1$ such that $G(t)\leq \frac{1}{2\nu}G(\nu t)$ for $t\geq 0$,
		
	\end{itemize}
	where $\mu,\nu$ depend on $p,q$. In fact, the condition $\nabla_2$ is just $\Delta_2$ applied to $G^*$.
	
	We next introduce the notions of fractional Orlicz-Sobolev spaces in the Heisenberg framework. For a given domain $\Omega\subset\mathbb{H}^n$ and an $N$-function $G$ with the $\Delta_2$ and $\nabla_2$ conditions, the Orlicz space $L^G(\Omega)$ is denoted by
	$$
	L^G(\Omega)=\left\{u:\Omega\rightarrow\mathbb{R} \text{ is measurable}: \int_\Omega G(|u(\xi)|)\,d\xi<\infty\right\}
	$$
	with the Luxemburg norm
	$$
	\|u\|_{L^G(\Omega)}=\inf\left\{\lambda>0: \int_\Omega G\left(\frac{|u(\xi)|}{\lambda}\right)\,d\xi\leq1\right\}.
	$$
	The fractional Orlicz-Sobolev space $HW^{s,G}(\Omega)$ ($s\in(0,1)$) is defined as
	$$
	HW^{s,G}(\Omega)=\left\{u\in L^G(\Omega):\int_\Omega\int_\Omega G\left(\frac{|u(\xi)-u(\eta)|}{|\eta^{-1}\circ\xi|^s_{\mathbb{H}^n}}\right)\,\frac{d\xi d\eta}{|\eta^{-1}\circ\xi|^Q_{\mathbb{H}^n}}<\infty\right\}
	$$
	equipped with the norm
	$$
	\|u\|_{HW^{s,G}(\Omega)}=\|u\|_{L^G(\Omega)}+[u]_{s,G,\Omega},
	$$
	where $[u]_{s,G,\Omega}$ represents the Gagliardo semi-norm
	$$
	[u]_{s,G,\Omega}=\inf\left\{\lambda>0:\int_\Omega\int_\Omega G\left(\frac{|u(\xi)-u(\eta)|}{\lambda|\eta^{-1}\circ\xi|^s_{\mathbb{H}^n}}\right)\,\frac{d\xi d\eta}{|\eta^{-1}\circ\xi|^Q_{\mathbb{H}^n}}\leq1\right\}.
	$$
	
	Let $\mathcal{C}_{\Omega}:= (\Omega\times\mathbb{H}^n)\cup(\mathbb{H}^n\times\Omega)$. For a measurable function $u$ in $\mathbb H^n$, we define 
	\begin{align*}
	H\mathbb{W}^{s,G}(\Omega)=\left\{u\big |_{\Omega}\in L^G(\Omega) : \iint_{\mathcal{C}_{\Omega}}G\left(\frac{|u(\xi)-u(\eta)|}{|\eta^{-1}\circ\xi|^s_{\mathbb{H}^n}}\right)\,\frac{d\xi d\eta}{|\eta^{-1}\circ\xi|^Q_{\mathbb{H}^n}}<\infty\right\},
	\end{align*}
	which is the function space weak solutions to \eqref{main} belong to. In the sequel, denote by $C$ a generic positive constant that may vary from line to line. Relevant dependencies on parameters will be explained by parentheses, i.e., $C\equiv C(n,p,q)$ means $C$ depends on $n,p,q$.
	
	\medskip
	
	Now we present the definition of weak solutions to \eqref{main}.
	\begin{definition}
		A function $u\in H\mathbb{W}^{s,G}(\Omega)$ is called weak solution (supersolution or subsolution) to Eq. \eqref{main}, if
		\begin{equation*}
		\iint_{\mathcal{C}_{\Omega}}g\left(\frac{|u(\xi)-u(\eta)|}{|\eta^{-1}\circ\xi|^s_{\mathbb{H}^n}}\right)
		\frac{u(\xi)-u(\eta)}{|u(\xi)-u(\eta)|}(\psi(\xi)-\psi(\eta))\frac{d\xi d\eta}{|\eta^{-1}\circ\xi|^{Q+s}_{\mathbb{H}^n}}=0 \ (\geq \text{ or } \le)0
		\end{equation*}
		for every $\psi\in H\mathbb{W}^{s,G}(\Omega)$ ($0\le\psi\in H\mathbb{W}^{s,G}(\Omega)$) with compact support in $\Omega$. 
	\end{definition}

	We end this part by the following iteration tool, which can be found in \cite[Lemma 1.1]{GG82}, playing a significant role in the proof of Theorem \ref{K}, Harnack inequality.
	\begin{lemma}
		\label{2.7Lemma}
		Suppose $f(t)$ is a bounded nonnegative function defined in $0\le T_0\le t\le T_1$. When for $T_0\le \sigma< \tau \le T_1$, we have
		\begin{equation*}
			f(\sigma)\le \iota f(\tau)+C_1(\tau-\sigma)^{-\gamma}+C_2
		\end{equation*}
		with $\iota\in (0,1)$ and $\gamma,C_1,C_2$ being nonnegative constants, this there is a number $C>0$, depending only upon $\gamma$ and $\iota$, such that, for each $T_0\le \rho <r\le T_1$, it holds
		\begin{equation*}
			f(\rho)\le C\left[ C_1(r-\rho)^{-\gamma}+C_2\right].
		\end{equation*}
	\end{lemma}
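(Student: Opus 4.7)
The plan is to iterate the hypothesis along a geometric sequence of points running from $\rho$ up to $r$, arranged so that the accumulated error terms form convergent geometric series. The crucial observation is that, since $\iota \in (0,1)$, one may fix $\lambda \in (0,1)$ close enough to $1$ that $\iota\lambda^{-\gamma} < 1$; this single inequality is what makes the whole iteration converge, and it is essentially the only genuinely nontrivial ingredient of the argument.

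Concretely, I would set $t_0 := \rho$ and $t_{i+1} := t_i + (1-\lambda)\lambda^i(r-\rho)$, so that $t_{i+1} - t_i = (1-\lambda)\lambda^i(r-\rho)$ and $t_i = \rho + (1-\lambda^i)(r-\rho) \nearrow r$; in particular $t_i \in [\rho, r] \subset [T_0,T_1]$ for every $i$. Applying the hypothesis with $\sigma = t_i$ and $\tau = t_{i+1}$ yields
\[
f(t_i) \le \iota f(t_{i+1}) + C_1 (1-\lambda)^{-\gamma}\lambda^{-i\gamma}(r-\rho)^{-\gamma} + C_2.
\]
A straightforward induction on this relation (multiplying the $i$-th inequality by $\iota^i$ and telescoping) produces, for every $k \ge 1$,
\[
f(\rho) \le \iota^k f(t_k) + C_1(1-\lambda)^{-\gamma}(r-\rho)^{-\gamma}\sum_{i=0}^{k-1}(\iota\lambda^{-\gamma})^i + C_2 \sum_{i=0}^{k-1}\iota^i.
\]

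Finally I would let $k \to \infty$. The first term on the right vanishes because $f$ is assumed bounded on $[T_0,T_1]$ and $\iota^k \to 0$; both remaining geometric series converge thanks to the choice of $\lambda$ (the $C_1$-series via $\iota\lambda^{-\gamma} < 1$, the $C_2$-series via $\iota < 1$), and their sums depend only on $\iota$ and $\gamma$. Collecting constants yields $f(\rho) \le C\bigl[C_1(r-\rho)^{-\gamma} + C_2\bigr]$ with $C = C(\iota,\gamma)$, which is exactly the claim. The only conceptual obstacle is securing the geometric balance $\iota\lambda^{-\gamma} < 1$; once that is in place the boundedness of $f$ enters only to discard the tail term $\iota^k f(t_k)$ in the limit, and everything else reduces to bookkeeping.
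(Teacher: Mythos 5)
Your argument is correct, and it is precisely the classical iteration proof of this lemma. The paper itself gives no proof — it simply cites \cite[Lemma~1.1]{GG82} (Giaquinta--Giusti), and the proof there (and in Giusti's book, Lemma~6.1) proceeds exactly as you do: interpolate a geometric sequence $t_i = \rho + (1-\lambda^i)(r-\rho)$, iterate the hypothesis, choose $\lambda\in(0,1)$ so that $\iota\lambda^{-\gamma}<1$ (possible because $\iota\lambda^{-\gamma}\to\iota<1$ as $\lambda\to 1^-$), and pass to the limit using the boundedness of $f$ to kill the term $\iota^k f(t_k)$. The resulting constant $C=\max\{(1-\lambda)^{-\gamma}/(1-\iota\lambda^{-\gamma}),\,1/(1-\iota)\}$ depends only on $\iota$ and $\gamma$, as required. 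Nothing is missing.
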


	\section{Energy estimates and boundedness}
	\label{sec-2}
	
	This section is devoted to establishing a improved Caccioppoli estimate involving all the information needed to demonstrate (local) regularity properties for weak solutions to \eqref{main} such as boundedness, H\"{o}lder continuity together with Harnack estimates. For the notations $\pm$ and $\mp$ blow, let us point out that subsolution and supersolution always correspond to the upper sign and lower sign, respectively. Besides, for convenience, we introduce several sets
		\begin{equation*}
			 A^+(k,r)=B_r\cap \left\lbrace u>k\right\rbrace ,\;
			A^-(k,r)=B_r\cap \left\lbrace u<k\right\rbrace
\end{equation*}
and
\begin{equation*}
			 A^+(k)=\left\lbrace u>k\right\rbrace,\;
			A^-(k)=\left\lbrace u<k\right\rbrace.
		\end{equation*}
	
	\begin{proposition}
		\label{A}
		Let u be a weak subsolution $(super\text{-})$ to Eq. \eqref{main} and $B_r:=B_r(\xi_0)\subset\Omega$. Then for any $0<\rho<r$, it holds that
		\begin{equation*}
			\begin{aligned}
				&\quad\int_{B_\rho}\int_{B_\rho}
				G\left( \frac{|w_\pm(\xi)-w_\pm(\eta)|}{|\eta^{-1}\circ\xi|^s_{\mathbb{H}^n}} \right)
				\frac{d\xi d\eta }{|\eta^{-1}\circ\xi|^{Q}_{\mathbb{H}^n}}\\
				&\quad+\int_{B_\rho}w_\pm(\xi)\int_{\mathbb{H}^n}
				g\left( \frac{w_\mp(\eta)}{|\eta^{-1}\circ\xi|^s_{\mathbb{H}^n}}\right)
				\frac{d\eta}{|\eta^{-1}\circ\xi|^{Q+s}_{\mathbb{H}^n}}\,d\xi\\
				&\le C\left( \frac{r}{r-\rho}\right)^q
				\int_{B_r} G\left( \frac{w_\pm}{r^s}\right) d\xi
				+C\left( \frac{r}{r-\rho}\right)^{Q+sq} ||w_\pm||_{L^1(B_r)}\mathrm{Tail}(w_\pm;\xi_0,r)
			\end{aligned}
		\end{equation*}
		with $C$ depending only on n,p,q,s. Here $w_\pm:=(u-k)_\pm$ for $k\in\mathbb{R}$.
	\end{proposition}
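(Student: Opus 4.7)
The plan is to use a cutoff-localized test function in the weak formulation and split the resulting integral into a ``local'' piece that produces the two left-hand-side terms and ``error'' pieces absorbed into the right-hand side. I explain the subsolution case ($w := w_+$); the supersolution case follows by replacing $u, k$ with $-u, -k$. Fix the intermediate radius $\rho' := (r+\rho)/2$ and choose a cutoff $\phi$ supported in $B_{\rho'}$ with $\phi \equiv 1$ on $B_\rho$, $0\le\phi\le 1$, and the gauge-Lipschitz estimate $|\phi(\xi)-\phi(\eta)| \le C|\eta^{-1}\circ\xi|_{\mathbb{H}^n}/(r-\rho)$. Testing the weak subsolution definition with $\psi := w\phi^q \in H\mathbb{W}^{s,G}(\Omega)$ (of compact support in $\Omega$) gives
\[
\iint_{\mathbb{H}^n\times\mathbb{H}^n} g\!\left(\tfrac{|u(\xi)-u(\eta)|}{h}\right)\mathrm{sgn}(u(\xi)-u(\eta))\bigl(\psi(\xi)-\psi(\eta)\bigr)\,\frac{d\xi\,d\eta}{|\eta^{-1}\circ\xi|^{Q+s}_{\mathbb{H}^n}} \le 0,
\]
where $h := |\eta^{-1}\circ\xi|^s_{\mathbb{H}^n}$.

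The two left-hand-side terms are extracted as follows. On $B_\rho\times B_\rho$ one has $\phi\equiv 1$, so $\psi(\xi)-\psi(\eta) = w(\xi)-w(\eta)$; a case analysis on the signs of $u(\xi)-k$ and $u(\eta)-k$ verifies $\mathrm{sgn}(u(\xi)-u(\eta)) = \mathrm{sgn}(w(\xi)-w(\eta))$ and $|u(\xi)-u(\eta)| \ge |w(\xi)-w(\eta)|$, whence the monotonicity of $g$ combined with $tg(t)\ge pG(t)$ from \eqref{02} bounds the integrand from below by $pG(|w(\xi)-w(\eta)|/h)/|\eta^{-1}\circ\xi|^{Q}_{\mathbb{H}^n}$, producing the first left-hand-side term. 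For the second, consider $\xi\in B_\rho$ (so $\psi(\xi) = w(\xi)$) paired with $u(\eta)\le k$ (so $\psi(\eta)=0$): then $u(\xi)-u(\eta) = w(\xi)+w_-(\eta)\ge w_-(\eta)$, and monotonicity of $g$ gives integrand $\ge g(w_-(\eta)/h)w(\xi)/|\eta^{-1}\circ\xi|^{Q+s}_{\mathbb{H}^n}$; since $w_-\equiv 0$ on $\{u\ge k\}$, the $\eta$-integral extends to all of $\mathbb{H}^n$ and yields the second left-hand-side term.

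The error contributions come from (i) the transition region $B_{\rho'}\setminus B_\rho$, where $\phi$ varies, and (ii) the tail region $\mathbb{H}^n\setminus B_{\rho'}$, where $\psi=0$. For (i), apply a pointwise $G$-algebraic inequality of the form
\[
g(|A-B|/h)\,\mathrm{sgn}(A-B)\,(A\phi_1^q - B\phi_2^q) \ge \tfrac{h}{C}G(|A-B|/h)\max(\phi_1,\phi_2)^q - Ch\,G\!\left(\tfrac{\max(A,B)|\phi_1-\phi_2|}{h}\right),
\]
(derivable from $tg(t)\ge pG(t)$, Young's inequality \eqref{1-3}--\eqref{1-4}, and $|\phi_1^q-\phi_2^q|\le q\max(\phi_1,\phi_2)^{q-1}|\phi_1-\phi_2|$), with $A := w(\xi)$, $B := w(\eta)$. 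Substituting the Lipschitz bound on $\phi$, the cross-term argument becomes $\le C(r/(r-\rho))(|\eta^{-1}\circ\xi|/r)^{1-s}\max(A,B)/r^s$, and \eqref{1-1} factors out $(r/(r-\rho))^q(|\eta^{-1}\circ\xi|/r)^{(1-s)p}$ times $G(\max(A,B)/r^s)$; the $\eta$-integral $\int_{B_{\rho'}}|\eta^{-1}\circ\xi|^{(1-s)p-Q}\,d\eta$ converges ($(1-s)p>0$) and produces the first right-hand-side term. For (ii), the gauge triangle inequality gives $|\eta^{-1}\circ\xi|\ge ((r-\rho)/(2r))|\xi_0^{-1}\circ\eta|$ for $\xi\in B_{\rho'}$ and $\eta\notin B_r(\xi_0)$, yielding the factor $(r/(r-\rho))^{Q+sq}$ after applying \eqref{1-1} to $g$; splitting $g((w(\xi)+w(\eta))/h)\le C(g(w(\xi)/h)+g(w(\eta)/h))$ via $\Delta_2$, the $w(\eta)$-piece gives $\|w\|_{L^1(B_r)}\mathrm{Tail}(w;\xi_0,r)$ directly, while the $w(\xi)$-piece becomes the first right-hand-side term after the radial change of variables $\tau = w(\xi)/t^s$, which converts $\int_r^\infty g(w(\xi)/t^s)t^{-s-1}\,dt$ into $G(w(\xi)/r^s)/(sw(\xi))$.

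The main obstacle is the clean execution of step (i): the $G$-algebraic inequality must be combined with \eqref{1-3}--\eqref{1-4} (with a small parameter $\epsilon$) so that a residual fractional $G$-seminorm is reabsorbed into the left-hand side, and the exponent $q$ in the cutoff $\phi^q$ is tuned precisely so that, after \eqref{1-1}, the resulting constant is exactly $(r/(r-\rho))^q$. The Heisenberg geometry enters only through the left-invariance of the Haar measure (used for symmetrization of the kernel) and the triangle-type inequality for the gauge norm, both of which hold in $\mathbb{H}^n$ without modification, so no additional subRiemannian input is needed beyond the gauge-Lipschitz cutoff.
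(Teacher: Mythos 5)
Your proposal follows the same overall strategy as the paper: test with $\varphi = \phi^q(u-k)_+$, decompose the double integral into a local part, a transition part where $\phi$ varies, and a far-field tail, treat the transition with the Young-type inequality \eqref{1-3}--\eqref{1-4} at a tunable $\epsilon$, and treat the tail with the gauge triangle inequality plus \eqref{1-1}. Steps extracting the two left-hand terms and step (i) are essentially the paper's argument verbatim.

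There is, however, a genuine (if small) gap in your step (ii). When the contribution is unfavorable, i.e.\ $u(\eta)>u(\xi)$, the integrand is nonzero only when $w_+(\xi)>0$, hence $u(\eta)>u(\xi)>k$ and $|u(\xi)-u(\eta)| = w_+(\eta)-w_+(\xi)\le w_+(\eta)$; so the correct move is $g(|u(\xi)-u(\eta)|/h)\le g(w_+(\eta)/h)$, which is exactly what the paper uses in \eqref{1.9} and produces only the $\mathrm{Tail}$ term on the right. Your looser bound $|u(\xi)-u(\eta)|\le w(\xi)+w(\eta)$ and the subsequent $\Delta_2$-split create an extraneous $g(w(\xi)/h)$-piece. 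Because you apply the gauge change-of-center (carrying the factor $(r/(r-\rho))^{Q+sq}$) before doing the radial integration, that piece evaluates to $C\bigl(\tfrac{r}{r-\rho}\bigr)^{Q+sq}\int_{B_r}G(w_+/r^s)\,d\xi$, whose prefactor exceeds the $\bigl(\tfrac{r}{r-\rho}\bigr)^{q}$ that the proposition asserts for the local term on the right-hand side. So as written your argument proves a weaker inequality. The fix is either the paper's sharper bound above (which makes the extra piece disappear), or, if you insist on the split, to compute the $w(\xi)$-piece's $\eta$-integral centered at $\xi$ over $|\eta^{-1}\circ\xi|_{\mathbb{H}^n}\ge (r-\rho)/2$, which produces only $(r/(r-\rho))^{sq}$ with $sq<q$. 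Finally, the regions you list (``$B_\rho\times B_\rho$'' and ``$\xi\in B_\rho$, $u(\eta)\le k$'') overlap, so the integrand there must be split to harvest both left-hand terms; the paper does this via $g(a+b)\ge \tfrac{1}{2}(g(a)+g(b))$. This is a bookkeeping point rather than a mathematical obstruction, but it should be made explicit.
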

	
	\begin{proof}
		We prove this statement for subsolutions, because the situation of supersolutions could be treated in a specular manner. Let $\phi\in C_0^\infty(B_r(\xi_0))$ be a cut-off function such that
		\begin{equation*}
			0\le \phi \le 1,\
			\phi\equiv 1 \ \
			\text{in}\ B_\rho ,\
			\phi=0\ \text{on}\ B_r\setminus B_{\frac{r+\rho}{2}}\quad
			\text{and}\quad
			|\nabla_{\mathbb{H}^n}\phi|\le \frac{C}{r-\rho}\ \text{in}\ B_r.
		\end{equation*}
		Take $\varphi:=\phi^q w_+=\phi^q(u-k)_+$ as a test function in the weak formulation of subsolution and then derive
		\begin{align}
				\label{1.1}
				0&\ge \int_{B_r}\int_{B_r}
				g\left( \frac{|u(\xi)-u(\eta)|}{|\eta^{-1}\circ\xi|^s_{\mathbb{H}^n}}\right)
				\frac{u(\xi)-u(\eta)}{|u(\xi)-u(\eta)|}
				\frac{(\phi^q w_+)(\xi)-(\phi^q w_+)(\eta)}{|\eta^{-1}\circ\xi|^{Q+s}_{\mathbb{H}^n}}
				d\xi d\eta\nonumber\\
				&\quad+2\int_{\mathbb{H}^n\setminus B_r}\int_{B_r}
				g\left( \frac{|u(\xi)-u(\eta)|}{|\eta^{-1}\circ\xi|^s_{\mathbb{H}^n}}\right)
				\frac{u(\xi)-u(\eta)}{|u(\xi)-u(\eta)|}
				\frac{\phi^q w_+(\xi)}{|\eta^{-1}\circ\xi|^{Q+s}_{\mathbb{H}^n}}
				d\xi d\eta\nonumber\\
				&:=I_1+2I_2.
		\end{align}\par
		We first consider the contributions from $B_r\times B_r$. Set
		\begin{equation*}
			F(\xi,\eta)=g\left( \frac{|u(\xi)-u(\eta)|}{|\eta^{-1}\circ\xi|^s_{\mathbb{H}^n}}\right)
			\frac{u(\xi)-u(\eta)}{|u(\xi)-u(\eta)|}
			\frac{\left( \phi^q w_+\right) (\xi)-\left( \phi^q w_+\right) (\eta)}{|\eta^{-1}\circ\xi|^{s}_{\mathbb{H}^n}}.
		\end{equation*}
		If $\xi,\eta\notin A^+(k)$, then $F(\xi,\eta)=0$. If $\xi\in A^+(k,r),\eta\in B_r\setminus A^+(k,r)$, then by \eqref{02} and the monotonicity of $g$,
		\begin{equation*}
			\begin{aligned}
				F(\xi,\eta)&=g\left( \frac{|w_+(\xi)+w_-(\eta)|}{|\eta^{-1}\circ\xi|^s_{\mathbb{H}^n}}\right)
				\frac{w_+(\xi)\phi^q (\xi)}{|\eta^{-1}\circ\xi|^{s}_{\mathbb{H}^n}}\\
				&\ge \frac{1}{2}
				\left[
				g\left( \frac{w_+(\xi)}{|\eta^{-1}\circ\xi|^s_{\mathbb{H}^n}}\right)
				+g\left( \frac{w_-(\xi)}{|\eta^{-1}\circ\xi|^s_{\mathbb{H}^n}}\right)
				\right]
				\frac{w_+(\xi)\phi^q(\xi)}{|\eta^{-1}\circ\xi|^{s}_{\mathbb{H}^n}}\\
				&\ge \frac{p}{2}G\left( \frac{|w_+(\xi)-w_+(\eta)|}{|\eta^{-1}\circ\xi|^s_{\mathbb{H}^n}}\right)
				+\frac{1}{2}g\left( \frac{w_-(\eta)}{|\eta^{-1}\circ\xi|^s_{\mathbb{H}^n}}\right)
				\frac{w_+(\xi)\phi^q(\xi)}{|\eta^{-1}\circ\xi|^{s}_{\mathbb{H}^n}}.
			\end{aligned}
		\end{equation*}\par
		In the case $\xi,\eta\in A^+(k,r)$, we, without loss of generality, suppose $u(\xi)>u(\eta)$, since we can exchange the roles of $\xi$ and $\eta$ for $u(\eta)>u(\xi)$ and find $F(\xi,\eta)=0$ for $u(\eta)=u(\xi)$. Under this constrained scenario, we evaluate $F(\xi,\eta)$ for $\phi(\xi)\ge\phi(\eta)$ as below,
		\begin{equation*}
			\begin{aligned}
				F(\xi,\eta)&=g\left( \frac{w_+(\xi)-w_+(\eta)}{|\eta^{-1}\circ\xi|^s_{\mathbb{H}^n}}\right)
				\frac{\left( \phi^qw_+\right) (\xi)-\left( \phi^qw_+\right) (\eta)}{|\eta^{-1}\circ\xi|^{s}_{\mathbb{H}^n}}\\
				&\ge g\left( \frac{w_+(\xi)-w_+(\eta)}{|\eta^{-1}\circ\xi|^s_{\mathbb{H}^n}}\right)
				\frac{w_+(\xi)-w_+(\eta)}{|\eta^{-1}\circ\xi|^{s}_{\mathbb{H}^n}}\phi^q(\xi)\\
				&\ge pG\left( \frac{w_+(\xi)-w_+(\eta)}{|\eta^{-1}\circ\xi|^{s}_{\mathbb{H}^n}}\right) \phi^q(\xi).
			\end{aligned}
		\end{equation*}
		As for $\phi(\xi)<\phi(\eta)$, we have
			\begin{align}
				\label{1.2}
				&\quad F(\xi,\eta)\nonumber\\
				&=g\left( \frac{w_+(\xi)-w_+(\eta)}{|\eta^{-1}\circ\xi|^s_{\mathbb{H}^n}}\right)
				\left[
				\frac{w_+(\xi)-w_+(\eta)}{|\eta^{-1}\circ\xi|^{s}_{\mathbb{H}^n}}\phi^q(\eta)
				+\frac{\phi^q(\xi)-\phi^q(\eta)}{|\eta^{-1}\circ\xi|^{s}_{\mathbb{H}^n}}w_+(\xi)
				\right] \nonumber\\
				&\ge pG\left( \frac{w_+(\xi)-w_+(\eta)}{|\eta^{-1}\circ\xi|^{s}_{\mathbb{H}^n}}\right) \phi^q(\eta)
				-q\phi^{q-1}(\eta)g\left( \frac{w_+(\xi)-w_+(\eta)}{|\eta^{-1}\circ\xi|^s_{\mathbb{H}^n}}\right)
				\frac{\phi(\eta)-\phi(\xi)}{|\eta^{-1}\circ\xi|^{s}_{\mathbb{H}^n}}w_+(\xi),
			\end{align}
		where we used the fact that
		\begin{equation*}
			\phi^q(\eta)-\phi^q(\xi)\le q\phi^{q-1}(\eta)\left( \phi(\eta)-\phi(\xi)\right).
		\end{equation*}
		Applying now \eqref{1-2}--\eqref{1-4} estimates
			\begin{align}
				\label{1.3}
				&\quad\phi^{q-1}(\eta)g\left( \frac{w_+(\xi)-w_+(\eta)}{|\eta^{-1}\circ\xi|^s_{\mathbb{H}^n}}\right)
				\frac{\phi(\eta)-\phi(\xi)}{|\eta^{-1}\circ\xi|^s_{\mathbb{H}^n}}w_+(\xi)\nonumber\\
				&\le \varepsilon G^*\left( \phi^{q-1}(\eta)g\left( \frac{w_+(\xi)-w_+(\eta)}{|\eta^{-1}\circ\xi|^s_{\mathbb{H}^n}}\right)\right)
				+C(\varepsilon)G\left( \frac{w_+(\xi)\left( \phi(\eta)-\phi(\xi)\right)}{|\eta^{-1}\circ\xi|^s_{\mathbb{H}^n}}\right) \nonumber\\
				&\le \varepsilon(q-1)G\left( \frac{w_+(\xi)-w_+(\eta)}{|\eta^{-1}\circ\xi|^{s}_{\mathbb{H}^n}}\right) \phi^q(\eta)
				+C(\varepsilon)G\left( \frac{w_+(\xi)\left( \phi(\eta)-\phi(\xi)\right)}{|\eta^{-1}\circ\xi|^s_{\mathbb{H}^n}}\right),
			\end{align}
		this time we choose $\varepsilon=\frac{p}{2q(q-1)}$ and combine \eqref{1.2},\eqref{1.3} to arrive at
		\begin{equation*}
			F(\xi,\eta)\ge \frac{p}{2} G\left( \frac{w_+(\xi)-w_+(\eta)}{|\eta^{-1}\circ\xi|^{s}_{\mathbb{H}^n}}\right) \phi^q(\eta)
			-C(p,q)G\left( \frac{w_+(\xi)\left( \phi(\eta)-\phi(\xi)\right)}{|\eta^{-1}\circ\xi|^s_{\mathbb{H}^n}}\right).
		\end{equation*}
		As a consequence, for $\xi,\eta\in A^+(k,r)$, there holds that
		\begin{equation*}
			\begin{aligned}
				F(\xi,\eta)&\ge \frac{p}{2} G\left( \frac{|w_+(\xi)-w_+(\eta)|}{|\eta^{-1}\circ\xi|^{s}_{\mathbb{H}^n}}\right)\text{max}\left\lbrace \phi^q(\xi),\phi^q(\eta)\right\rbrace \\
				&\quad-C(p,q)G\left( \frac{\text{max}\left\lbrace w_+(\xi),w_+(\eta)\right\rbrace |\phi(\xi)-\phi(\eta)|}{|\eta^{-1}\circ\xi|^{s}_{\mathbb{H}^n}}\right) .
			\end{aligned}
		\end{equation*}\par
		It follows from these estimates above that
			\begin{align}
				\label{1.4}
				I_1&\ge \frac{p}{2}\int_{B_r}\int_{B_r}G\left( \frac{|w_+(\xi)-w_+(\eta)|}{|\eta^{-1}\circ\xi|^{s}_{\mathbb{H}^n}}\right)
				\frac{\text{min}\left\lbrace \phi^q(\xi),\phi^q(\eta)\right\rbrace }{|\eta^{-1}\circ\xi|^{Q}_{\mathbb{H}^n}}
				d\xi d\eta\nonumber\\
				&\quad+\int_{B_{r}\setminus A^+(k,r)}\int_{A^+(k,r)}g\left( \frac{w_-(\eta)}{|\eta^{-1}\circ\xi|^{s}_{\mathbb{H}^n}}\right)
				\frac{w_+(\xi)\phi^q(\xi)}{|\eta^{-1}\circ\xi|^{Q}_{\mathbb{H}^n}}
				d\xi d\eta\nonumber\\
				&\quad-C\int_{B_{r}}\int_{B_{r}}G\left( \frac{\text{max}\left\lbrace w_+(\xi),w_+(\eta)\right\rbrace |\phi(\xi)-\phi(\eta)|}{|\eta^{-1}\circ\xi|^{s}_{\mathbb{H}^n}}\right)
				\frac{d\xi d\eta}{|\eta^{-1}\circ\xi|^{Q}_{\mathbb{H}^n}}\nonumber\\
				&\ge \frac{p}{2}\int_{B_\rho}\int_{B_\rho}G\left( \frac{|w_+(\xi)-w_+(\eta)|}{|\eta^{-1}\circ\xi|^{s}_{\mathbb{H}^n}}\right)
				\frac{d\xi d\eta}{|\eta^{-1}\circ\xi|^{Q}_{\mathbb{H}^n}}\nonumber\\
				&\quad+\int_{B_\rho}w_+(\xi)
				\left[
				\int_{B_r}g\left( \frac{w_-(\eta)}{|\eta^{-1}\circ\xi|^{s}_{\mathbb{H}^n}}\right)
				\frac{d\eta}{|\eta^{-1}\circ\xi|^{Q+s}_{\mathbb{H}^n}}
				\right]  d\xi\nonumber\\
				&\quad-C\int_{B_r}\int_{B_r}G\left( \frac{\text{max}\left\lbrace w_+(\xi),w_+(\eta)\right\rbrace |\phi(\xi)-\phi(\eta)|}{|\eta^{-1}\circ\xi|^{s}_{\mathbb{H}^n}}\right)
				\frac{d\xi d\eta}{|\eta^{-1}\circ\xi|^{Q}_{\mathbb{H}^n}}.
			\end{align}
		Now recalling the properties of $\phi$, we get
		\begin{align*}
			|\phi(\xi)-\phi(\eta)|\le |\eta^{-1}\circ\xi|_{\mathbb{H}^n}\sup_{B_r}|\nabla_{\mathbb H^n}\phi|\le \frac{C}{r-\rho}|\eta^{-1}\circ\xi|_{\mathbb{H}^n},
		\end{align*}
		then via \eqref{1-1} and \eqref{1-5}
		\begin{align}
				\label{1.5}
				&\quad\int_{B_{r}}\int_{B_{r}}G\left( \frac{\text{max}\left\lbrace w_+(\xi),w_+(\eta)\right\rbrace |\phi(\xi)-\phi(\eta)|}{|\eta^{-1}\circ\xi|^{s}_{\mathbb{H}^n}}\right)
				\frac{d\xi d\eta}{|\eta^{-1}\circ\xi|^{Q}_{\mathbb{H}^n}}\nonumber\\
				&\le C\int_{B_{r}}\int_{B_{r}}G\left( w_+(\xi)\frac{|\eta^{-1}\circ\xi|^{1-s}_{\mathbb{H}^n}}{r-\rho}\right)
				\frac{d\xi d\eta}{|\eta^{-1}\circ\xi|^{Q}_{\mathbb{H}^n}}\nonumber\\
				&=C\int_{B_{r}}\int_{B_{r}}G\left( \frac{2r}{r-\rho}\left( \frac{|\eta^{-1}\circ\xi|_{\mathbb{H}^n}}{2r}\right)^{1-s}\frac{w_+(\xi)}{(2r)^s} \right)
				\frac{d\xi d\eta}{|\eta^{-1}\circ\xi|^{Q}_{\mathbb{H}^n}}\nonumber\\
				&\le C\left( \frac{r}{r-\rho}\right)^q \int_{B_{r}}\int_{B_{r}}\left( \frac{|\eta^{-1}\circ\xi|_{\mathbb{H}^n}}{2r}\right)^{p(1-s)}
				G\left( \frac{w_+(\xi)}{r}\right) \frac{d\xi d\eta}{|\eta^{-1}\circ\xi|^{Q}_{\mathbb{H}^n}}\nonumber\\
				&\le C\left( \frac{r}{r-\rho}\right)^q \int_{B_r}G\left( \frac{w_+(\xi)}{r^s}\right)d\xi
				\int_{B_{2r}(\xi)}\frac{|\eta^{-1}\circ\xi|^{-Q+p(1-s)}_{\mathbb{H}^n}}{(2r)^{p(1-s)}}d\eta\nonumber\\
				&\le C\left( \frac{r}{r-\rho}\right)^q \int_{B_r}G\left( \frac{w_+(\xi)}{r^s}\right)d\xi,
		\end{align}
		where $C\ge1$ depends on $n,p,q,s$. Plugging \eqref{1.5} into \eqref{1.4} yields that
			\begin{align}
				\label{1.6}
				I_1&\ge \frac{p}{2}\int_{B_{\rho}}\int_{B_{\rho}}G\left( \frac{|w_+(\xi)-w_+(\eta)|}{|\eta^{-1}\circ\xi|^{s}_{\mathbb{H}^n}}\right)
				\frac{d\xi d\eta}{|\eta^{-1}\circ\xi|^{Q}_{\mathbb{H}^n}}\nonumber\\
				&\quad+\int_{B_\rho}w_+(\xi)
				\left[
				\int_{B_r}g\left( \frac{w_-(\eta)}{|\eta^{-1}\circ\xi|^{s}_{\mathbb{H}^n}}\right)
				\frac{d\eta}{|\eta^{-1}\circ\xi|^{Q+s}_{\mathbb{H}^n}}
				\right]  d\xi\nonumber\\
&\quad-C\left( \frac{r}{r-\rho}\right)^q\int_{B_r}G\left( \frac{w_+(\xi)}{r^s}\right)d\xi,
			\end{align}
		where $C\ge 1$ depends on $n,p,q,s$.

		We next deal with the nonlocal integral $I_2$,
			\begin{align}
				\label{1.7}
				I_2&=\int_{B_r}\left( \phi^qw_+\right) (\xi)\left[
				\int_{\mathbb{H}^n\setminus B_r}g\left( \frac{|u(\xi)-u(\eta)|}{|\eta^{-1}\circ\xi|^{s}_{\mathbb{H}^n}}\right)
				\frac{u(\xi)-u(\eta)}{|u(\xi)-u(\eta)|}
				\frac{d\eta}{|\eta^{-1}\circ\xi|^{Q+s}_{\mathbb{H}^n}}
				\right]
				d\xi\nonumber\\
				&=\int_{B_r}\left( \phi^qw_+\right) (\xi)\left[
				\int_{\left\lbrace u(\xi)\ge u(\eta)\right\rbrace\setminus B_r }
				g\left( \frac{|u(\xi)-u(\eta)|}{|\eta^{-1}\circ\xi|^{s}_{\mathbb{H}^n}}\right)
				\frac{u(\xi)-u(\eta)}{|u(\xi)-u(\eta)|}
				\frac{d\eta}{|\eta^{-1}\circ\xi|^{Q+s}_{\mathbb{H}^n}}
				\right]
				d\xi\nonumber\\
				&\quad+\int_{B_r}\left( \phi^qw_+\right) (\xi)\left[
				\int_{\left\lbrace u(\xi)< u(\eta)\right\rbrace\setminus B_r }
				g\left( \frac{|u(\xi)-u(\eta)|}{|\eta^{-1}\circ\xi|^{s}_{\mathbb{H}^n}}\right)
				\frac{u(\xi)-u(\eta)}{|u(\xi)-u(\eta)|}
				\frac{d\eta}{|\eta^{-1}\circ\xi|^{Q+s}_{\mathbb{H}^n}}
				\right]
				d\xi\nonumber\\
				&\ge \int_{B_\rho}w_+(\xi)\left[
				\int_{\left\lbrace u(\xi)\ge u(\eta)\right\rbrace\setminus B_r }
				g\left( \frac{|u(\xi)-u(\eta)|}{|\eta^{-1}\circ\xi|^{s}_{\mathbb{H}^n}}\right)
				\frac{d\eta}{|\eta^{-1}\circ\xi|^{Q+s}_{\mathbb{H}^n}}
				\right]
				d\xi\nonumber\\
				&\quad -\int_{B_{\frac{r+\rho}{2}}}w_+(\xi)\left[
				\int_{\left\lbrace u(\xi)< u(\eta)\right\rbrace\setminus B_r }
				g\left( \frac{|u(\xi)-u(\eta)|}{|\eta^{-1}\circ\xi|^{s}_{\mathbb{H}^n}}\right)
				\frac{d\eta}{|\eta^{-1}\circ\xi|^{Q+s}_{\mathbb{H}^n}}
				\right]
				d\xi\nonumber\\
				&=:I_{21}-I_{22}.
			\end{align}
		For $I_{21}$, we find that
			\begin{align}
				\label{1.8}
				I_{21}&\ge \int_{B_\rho}w_+(\xi)\int_{A^-(k)\setminus B_r}
				g\left( \frac{w_+(\xi)+w_-(\eta)}{|\eta^{-1}\circ\xi|^{s}_{\mathbb{H}^n}}\right)
				\frac{d\eta}{|\eta^{-1}\circ\xi|^{Q+s}_{\mathbb{H}^n}}\,d\xi\nonumber\\
				&\ge \int_{B_\rho}w_+(\xi)\int_{\mathbb{H}^n\setminus B_r}
				g\left( \frac{w_-(\eta)}{|\eta^{-1}\circ\xi|^{s}_{\mathbb{H}^n}}\right)
				\frac{d\eta}{|\eta^{-1}\circ\xi|^{Q+s}_{\mathbb{H}^n}}\,d\xi.
			\end{align}
		On the other hand, if $\xi\in B_{\frac{r+\rho}{2}}$ and $\eta\in \mathbb{H}^n\setminus B_r$, then
		\begin{equation*}
\begin{split}
			|\xi_0^{-1}\circ\eta|_{\mathbb{H}^n}&\le \left( 1+\frac{|\xi_0^{-1}\circ\eta|_{\mathbb{H}^n}}{|\eta^{-1}\circ\xi|_{\mathbb{H}^n}}\right) |\eta^{-1}\circ\xi|_{\mathbb{H}^n}\\
			&\le \left( 1+\frac{(\rho+r)/2}{(r-\rho)/2}\right) |\eta^{-1}\circ\xi|_{\mathbb{H}^n}
			\le \frac{2r}{r-\rho}|\eta^{-1}\circ\xi|_{\mathbb{H}^n}.
\end{split}
		\end{equation*}
		Thereby,
			\begin{align}
				\label{1.9}
				I_{22}&\le \int_{B_{\frac{r+\rho}{2}}}w_+(\xi)\left[
				\int_{\mathbb{H}^n\setminus B_r}g\left(
				\left( \frac{2r}{r-\rho}\right)^s\frac{w_+(\eta)}{|\xi_0^{-1}\circ\eta|^s_{\mathbb{H}^n}}
				\right)
				\left( \frac{2r}{r-\rho}\right)^{Q+s} \frac{d\eta}{|\xi_0^{-1}\circ\eta|^{Q+s}_{\mathbb{H}^n}}
				\right] d\xi\nonumber\\
				&\le \frac{q}{p}\left( \frac{2r}{r-\rho}\right)^{Q+sq} \int_{B_{\frac{r+\rho}{2}}}w_+(\xi)
				\int_{\mathbb{H}^n\setminus B_r}g\left(
				\frac{w_+(\eta)}{|\xi_0^{-1}\circ\eta|^s_{\mathbb{H}^n}}
				\right)
				\frac{d\eta}{|\xi_0^{-1}\circ\eta|^{Q+s}_{\mathbb{H}^n}}\,
				d\xi\nonumber\\
				&\le C\left( \frac{r}{r-\rho}\right)^{Q+sq}||w_+||_{L^1(B_r)}\mathrm{Tail}(w_+;\xi_0,r),
			\end{align}
		where we utilized \eqref{02} and \eqref{1-1}, and the positive constant $C$ depends only open $p$,$q$,$s$,$n$. Putting together \eqref{1.8},\eqref{1.9} and \eqref{1.7} obtains
			\begin{align}
				\label{1.10}
				I_2&\ge \int_{B_\rho}w_+(\xi)\int_{\mathbb{H}^n\setminus B_r}
				g\left( \frac{w_-(\eta)}{|\eta^{-1}\circ\xi|^{s}_{\mathbb{H}^n}}\right)
				\frac{d\eta}{|\eta^{-1}\circ\xi|^{Q+s}_{\mathbb{H}^n}}\,d\xi\nonumber\\
				&\quad-C\left( \frac{r}{r-\rho}\right)^{Q+sq}||w_+||_{L^1(B_r)}\mathrm{Tail}(w_+;\xi_0,r).
			\end{align}
		Finally, we combine \eqref{1.6},\eqref{1.10} with \eqref{1.1} to infer the desired result.
	\end{proof}
	
Now we show an integral form of Sobolev-Poincar\'{e} type inequality for functions in the fractional Orlicz-Sobolev space $H\mathbb W^{s,G}(B_r)$ in the Heisenberg group context, which is a key ingredient of the proof of regularity results.

	\begin{lemma}[Sobolev-Poincar\'{e} inequality]
		\label{B}
		Let $u\in H\mathbb W^{s,G}(B_r) (s\in (0,1))$ and $G$ be an $N$-function such that the $\Delta_2$ and $\nabla_2$ conditions 
holds with constants $\kappa$ and $\nu$. Then there is a constant $\theta=\theta(Q,s)>1$ satisfying
	    \begin{equation*}
	    	\left( \mint_{B_{r}}G^\theta\left( \frac{|u-(u)_{B_r}|}{r^s}\right) d\xi \right)^{\frac{1}{\theta}}
	    	\le C\mint_{B_{r}}\int_{B_r}G\left( \frac{|u(\xi)-u(\eta)|}{|\eta^{-1}\circ\xi|^{s}_{\mathbb{H}^n}}\right)\frac{d\xi d\eta}{|\eta^{-1}\circ\xi|^{Q}_{\mathbb{H}^n}},
	    \end{equation*}
	    where $C>0$ depends on $Q,s,\kappa$ and $\nu$.
	\end{lemma}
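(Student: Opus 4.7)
The plan is to deduce this fractional Orlicz Sobolev--Poincar\'e inequality from the classical fractional $L^p$ Sobolev--Poincar\'e inequality on the Heisenberg group together with a dyadic truncation argument that promotes the $L^p$ gain to an Orlicz gain.

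First I would recall the standard fractional $L^p$ Sobolev--Poincar\'e inequality on $\mathbb H^n$ at the lower exponent $p$ appearing in \eqref{02}: for any $v\in HW^{s,p}(B_r)$,
\[
\left(\mint_{B_r}\Bigl(\frac{|v-(v)_{B_r}|}{r^s}\Bigr)^{p\chi_0}d\xi\right)^{\!1/(p\chi_0)}\le C\left(\mint_{B_r}\int_{B_r}\Bigl(\frac{|v(\xi)-v(\eta)|}{|\eta^{-1}\circ\xi|^s_{\mathbb H^n}}\Bigr)^{\!p}\frac{d\xi\,d\eta}{|\eta^{-1}\circ\xi|^Q_{\mathbb H^n}}\right)^{\!1/p},
\]
with gain $\chi_0:=Q/(Q-sp)>1$ when $sp<Q$ (and any fixed $\chi_0>1$ otherwise, via Morrey embedding on stratified Lie groups). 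This is well-known in the Heisenberg setting.

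Next, setting $w:=|u-(u)_{B_r}|/r^s$, I would apply the displayed $L^p$ inequality to the dyadic truncations $w_j:=\min((w-2^j)_+,2^j)$ for $j\in\mathbb{Z}$, which satisfy $w=\sum_j w_j$ outside $\{w\le 1\}$. On the support $\{w_j>0\}\subset\{w>2^j\}$ one has $w\asymp 2^j$, so by \eqref{1-1} the quantities $G(w)$ and $G(2^j)$ are comparable up to constants depending only on $p,q$; analogously, $|w_j(\xi)-w_j(\eta)|^p$ and $G(|w(\xi)-w(\eta)|)$ are comparable on each dyadic scale. Recasting the $L^{p\chi_0}$ bound for each $w_j$ in terms of $G$ and summing over $j$, where the $\nabla_2$ condition (i.e.\ $\Delta_2$ applied to $G^*$) guarantees the geometric summability of the dyadic contributions, produces
\[
\bigl(\mint_{B_r}G(w)^{\chi_0}d\xi\bigr)^{1/\chi_0}\le C\cdot\mathrm{RHS}.
\]
A final H\"older inequality downgrades $\chi_0$ to the universal $\theta:=Q/(Q-s)>1$, which depends only on $Q$ and $s$ as required, while absorbing the $\Delta_2/\nabla_2$ constants into $C$.

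The main obstacle is the reassembly step: each truncation $w_j$ does not automatically have vanishing mean nor vanish on a substantial subset of $B_r$, so the Poincar\'e--Sobolev inequality above must be invoked with care, e.g.\ by splitting $B_r$ according to whether $|\{w>2^j\}\cap B_r|$ is comparable to $|B_r|$ or not and using a one-sided (non-centered) Sobolev inequality in the latter regime. A further delicate bookkeeping is required when tracking how the $\nabla_2$ constant enters the summation, to ensure that the final exponent $\theta$ depends only on $Q,s$ while the multiplicative constant $C$ collects all dependence on $\kappa,\nu$ and on $p,q$.
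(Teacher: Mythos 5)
Your approach is genuinely different from the paper's, which does \emph{not} pass through the $L^p$ fractional Sobolev inequality at all: the paper invokes \cite[Lemma~4.1]{BKO23} and adapts its pointwise Riesz-potential estimate
\[
|u(\xi)-(u)_{B_r}|\le C\int_{B_r}\frac{h(\eta)}{|\xi^{-1}\circ\eta|^{Q-s}_{\mathbb H^n}}\,d\eta
\]
to the Heisenberg metric, so the exponent gain $\theta=\theta(Q,s)$ comes directly from the $L^1\to L^{\theta,\infty}$ mapping properties of the truncated Riesz kernel $|\cdot|^{-(Q-s)}$, and the $N$-function $G$ enters only afterwards via convexity and the $\Delta_2/\nabla_2$ constants. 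That route never has to reconcile a $p$-dependent Sobolev gain $\chi_0=Q/(Q-sp)$ with a $(Q,s)$-only exponent $\theta$.

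Your Maz'ya-style truncation plan has a concrete gap precisely at the point you gloss over as ``recasting the $L^{p\chi_0}$ bound for each $w_j$ in terms of $G$ and summing.'' Two of the asserted comparabilities are false as stated: on $\{w_j>0\}=\{w>2^j\}$ one only has $w\ge 2^j$, not $w\asymp 2^j$ (the comparability is true only on the dyadic annulus $\{2^j<w\le 2^{j+1}\}$, which is a much smaller set than the support of $w_j$); and the difference quotients $|w_j(\xi)-w_j(\eta)|^p$ are \emph{not} comparable to $G(|w(\xi)-w(\eta)|)$ on any natural decomposition, because $|w_j(\xi)-w_j(\eta)|$ is capped at $2^j$ while $|w(\xi)-w(\eta)|$ is not. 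The deeper problem is that even after summing the truncated seminorms, which does give $\sum_j \iint |w_j(\xi)-w_j(\eta)|^p\,|\eta^{-1}\circ\xi|^{-Q-sp}\lesssim\iint |w(\xi)-w(\eta)|^p\,|\eta^{-1}\circ\xi|^{-Q-sp}$ by superadditivity of truncations and $p\ge 1$, you still need to dominate the $L^p$ modular $\iint (|u(\xi)-u(\eta)|/|\eta^{-1}\circ\xi|^s)^p\,\cdot$ by the $G$-modular $\iint G(|u(\xi)-u(\eta)|/|\eta^{-1}\circ\xi|^s)\,\cdot$. The inequality $t^p\lesssim G(t)$ coming from \eqref{02} holds only for $t\ge 1$; for $t<1$ it reverses, so the $L^p$ seminorm is \emph{not} controlled by the $G$-modular without an additional normalization step that you have not supplied. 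Saying that the $\nabla_2$ constant ``guarantees geometric summability'' does not address this sign issue. Unless you can fill in how the $L^p$ data gets converted to $G$-data scale by scale (e.g.\ by first normalizing the $G$-modular to $1$ and working with the Luxemburg norm, or by proving a weak-type weighted estimate directly in terms of $g$), the plan as written does not close.

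By contrast, if you wish to avoid quoting BKO23 wholesale, the more robust substitute is to reprove the pointwise chain in the Remark following the lemma: telescoping over dyadic sub-balls $B_{r_i}(\xi)$, using the Heisenberg quasi-triangle inequality to pass from $r_i^{-Q+s}$ to $|\xi^{-1}\circ\eta|_{\mathbb H^n}^{-(Q-s)}$, and then running Jensen and the boundedness of the Riesz potential on $L^1(B_r)$. That machinery produces the correct $\theta=\theta(Q,s)>1$ in one stroke and cleanly isolates the dependence on $\kappa,\nu$ (equivalently $p,q$) inside the multiplicative constant $C$.
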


\begin{remark}
The proof is the same as that of \cite[Lemma 4.1]{BKO23}, except that the dimension $n$ there is substituted by $Q$ and the inequality above \cite[display (4.3)]{BKO23} is replaced with
	\begin{align*}
		|u(\xi)-(u)_{B_r}|&\le C\sum_{i=0}^{\infty}r_i^{-Q+s}\int_{B_{2r_i}(\xi)\cap B_r}h(\eta)\,d\eta\\
		&\le C\sum_{i=0}^{\infty}\sum_{j=1}^{\infty}2^{(Q-s)i}\int_{\left( B_{2r_j}(\xi)\setminus B_{2r_{j+1}}(\xi)\right) \cap B_r}r^{-Q+s}h(\eta)\,d\eta\\
		&=C\sum_{j=0}^{\infty}\sum_{i=0}^{j}2^{(Q-s)i}\int_{\left( B_{2r_j}(\xi)\setminus B_{2r_{j+1}}(\xi)\right) \cap B_r}r^{-Q+s}h(\eta)\,d\eta\\
		&\le C\sum_{j=0}^{\infty}2^{(Q-s)j}\int_{\left( B_{2r_j}(\xi)\setminus B_{2r_{j+1}}(\xi)\right) \cap B_r}r^{-Q+s}h(\eta)\,d\eta\\
		&=C2^{Q-s}\sum_{j=0}^{\infty}\int_{\left( B_{2r_j}(\xi)\setminus B_{2r_{j+1}}(\xi)\right) \cap B_r}\frac{h(\eta)}{(2^{-j+1}r)^{Q-s}}\,d\eta\\
		&\le C\sum_{j=0}^{\infty}\int_{\left( B_{2r_j}(\xi)\setminus B_{2r_{j+1}}(\xi)\right) \cap B_r}\frac{h(\eta)}{|\xi^{-1}\circ\eta|^{Q-s}_{\mathbb{H}^n}}\,d\eta\\
		&=C\int_{B_r}\frac{h(\eta)}{|\xi^{-1}\circ\eta|^{Q-s}_{\mathbb{H}^n}}\,d\eta,
		\end{align*}
	where the notations here follows that in \cite[Lemma 4.1]{BKO23} and in the second line from the bottom we utilized $|\xi^{-1}\circ\eta|_{\mathbb{H}^n}\le 2r_j=2^{-j+1}r$.
\end{remark}

In the end of this section, we complete the proof of the local boundedness of weak solutions to \eqref{main} in Theorem \ref{C}.

\medskip

\noindent\textbf{Proof of Theorem \ref{C}}. For $i\in \mathbb{N}\cup\left\lbrace 0\right\rbrace $, let
		\begin{equation*}
			r_i:=(1+2^{-i})\frac{r}{2}, B_i:=B_{r_i}(\xi_0)
		\end{equation*}
		and
		\begin{equation*}
			k_i=(1-2^{-i})k,\ \tilde{k}_i=\frac{k_i+k_{i+1}}{2},\ w_i=(u-k_i)_+,\ \tilde{w}_i=(u-\tilde{k}_i)_+
		\end{equation*}
		with $k>0$. Obviously,
		\begin{equation*}
			k_i<\tilde{k}_i<k_{i+1}\quad w_{i+1}\le\tilde{w}_i\le w_{i}.
		\end{equation*}
		Applying Proposition \ref{A} with $\rho:=r_{i+1},\ r:=r_i$ and $w_+:=\tilde{w}_i$ deduces
			\begin{align}
				\label{3.1}
				&\quad\int_{B_{i+1}}\int_{B_{i+1}}
				G\left( \frac{|\tilde{w}_i(\xi)-\tilde{w}_i(\eta)|}{|\eta^{-1}\circ\xi|^{s}_{\mathbb{H}^n}}\right)
				\frac{d\xi d\eta}{|\eta^{-1}\circ\xi|^{Q}_{\mathbb{H}^n}}\nonumber\\
				&\le C\left( \frac{r_i}{r_i-r_{i+1}}\right)^q
				\int_{B_i}G\left( \frac{\tilde{w}_i}{r_i^s}\right)\,d\xi
				+C\left( \frac{r_i}{r_i-r_{i+1}}\right)^{Q+sq}||\tilde{w}_i||_{L^1(B_i)}\mathrm{Tail}(\tilde{w}_i;\xi_0,r_i).
			\end{align}
		According to \eqref{02} and a variant on \eqref{1-1}, we analyze
		\begin{equation*}
			C2^{-(q-1)i}g\left( \frac{k}{r^s}\right)\frac{\tilde{w}_i}{r_i^s}
			\le \frac{\tilde{w}_i}{r_i^s}g\left( \frac{\tilde{k}_i-k_i}{r_i^s}\right)
			\le \frac{\tilde{w}_i}{r_i^s}g\left( \frac{\tilde{w}_i}{r_i^s}\right)
			\le qG\left( \frac{w_i}{r_i^s}\right) ,
		\end{equation*}
		and know
		\begin{equation*}
			\frac{r_i}{r_i-r_{i+1}}\le 2^{i+2}\quad\text{and}\quad \mathrm{Tail}(\tilde{w}_i;\xi_0,r_i)\le C\mathrm{Tail}(u_+;\xi_0,r/2).
		\end{equation*}
		As a result, \eqref{3.1} is evaluated as
		\begin{equation*}
			\begin{aligned}
				&\quad\mint_{B_{i+1}}\int_{B_{i+1}}{\tiny }
				G\left( \frac{|\tilde{w}_i(\xi)-\tilde{w}_i(\eta)|}{|\eta^{-1}\circ\xi|^{s}_{\mathbb{H}^n}}\right)
				\frac{d\xi d\eta}{|\eta^{-1}\circ\xi|^{Q}_{\mathbb{H}^n}}\\
				&\le C2^{qi}\mint_{B_i}G\left( \frac{w_i}{r_i^s}\right)\,d\xi
				 +C2^{(Q+sq+q)i}\mathrm{Tail}(u_+;\xi_0,r/2)\times \frac{r_i^s}{g(k/r^s)}\mint_{B_{i}}G\left( \frac{w_i}{r_i^s}\right)\,d\xi
			\end{aligned}
		\end{equation*}
		with $C=C(n,p,q,s)$.\par
		Next we are going to use Lemma \ref{B}, Sobolev-Poincar$\acute{\text{e}}$ inequality, to derive a recursive inequality. Due to Lemma \ref{B} and the last inequality,
		\begin{equation*}
			\begin{aligned}
				&\quad\left( \mint_{B_{i+1}}G^\theta\left( \frac{|\tilde{w}_i(\xi)-\tilde{w}_i(\eta)|}{r^s_{i+1}}\right)\, d\xi \right)^{\frac{1}{\theta}}\\
				&\le C2^{qi}\mint_{B_{i}}G\left( \frac{w_i}{r^s_{i}}\right)\,d\xi +C2^{(Q+sq+q)i}\mint_{B_{i}}G\left( \frac{w_i}{r_i^s}\right)\,d\xi\times \frac{r_i^s}{g(k/r^s)}\mathrm{Tail}(u_+;\xi_0,r/2)
			\end{aligned}
		\end{equation*}
		where $\theta=\theta(Q,s)>1$ and $C=C(n,p,q,s)\ge 1$. On the other hand, by Jensen's inequality, we have
		\begin{equation*}
			\begin{aligned}
				&\quad \left( \mint_{B_{i+1}}G^\theta\left( \frac{\tilde{w}_i}{r^s_{i+1}}\right) \,d\xi\right)^{\frac{1}{\theta}}\\
				&\le C\left( \mint_{B_{i+1}}G^\theta\left( \frac{|\tilde{w}_i-(\tilde{w}_i)_{B_{i+1}}|}{r^s_{i+1}}\right)\, d\xi \right)^{\frac{1}{\theta}}
				+CG\left( \frac{(\tilde{w}_i)_{B_{i+1}}}{r_{i+1}^s}\right) \\
				&\le C\left( \mint_{B_{i+1}}G^\theta\left( \frac{|\tilde{w}_i-(\tilde{w}_i)_{B_{i+1}}|}{r^s_{i+1}}\right)\, d\xi \right)^{\frac{1}{\theta}}
				+C\mint_{B_{i+1}}G\left( \frac{\tilde{w}_i}{r^s_{i+1}}\right)\,d\xi.
			\end{aligned}
		\end{equation*}
		Observe that
		\begin{equation*}
			\begin{aligned}
				C^{-1}(\theta,q)2^{-q\theta i}G^{\theta-1}\left( \frac{k}{r^s}\right) G\left( \frac{w_{i+1}}{r^s_{i+1}}\right)
				&\le G\left( 2^{-i-2}k/r^s\right)^{\theta-1}G\left( \frac{w_{i+1}}{r^s_{i+1}}\right)\\
				&=G^{\theta-1}\left( \frac{k_{i+1}-\tilde{k}_i}{r^s_{i+1}}\right)G\left( \frac{w_{i+1}}{r^s_{i+1}}\right)\\
				&\le G^{\theta-1}\left( \frac{\tilde{w}_i}{r^s_{i+1}}\right)G\left( \frac{w_{i+1}}{r^s_{i+1}}\right)\\
				&\le G^\theta\left( \frac{\tilde{w}_i}{r^s_{i+1}}\right)
			\end{aligned}
		\end{equation*}
		from the monotonicity of $G$. This time it follows from the three displays above that
		\begin{equation*}
			\begin{aligned}
				&\quad\  G^{\frac{\theta-1}{\theta}}\left( \frac{k}{r^s}\right)
				\left( \mint_{B_{i+1}}G\left( \frac{w_{i+1}}{r^s_{i+1}}\right) d\xi\right)^{\frac{1}{\theta}}\\
				&\le C(\theta,q)2^{qi}  \left[
				C\left( \mint_{B_{i+1}}G^{\theta}\left( \frac{|w_{i+1}-(w_{i+1})_{B_{i+1}}|}{r^s_{i+1}}\right) d\xi\right)^{\frac{1}{\theta}}
				+C \mint_{B_{i+1}}G\left( \frac{w_{i+1}}{r^s_{i+1}}\right)d\xi
				\right] \\
				&\le C2^{2qi}\mint_{B_{i}}G\left( \frac{w_i}{r^s_i}\right)d\xi+C2^{(Q+sq+2q)i}\frac{r^s_i}{g(k/r^s)}\mathrm{Tail}(u_+;\xi_0,r/2)\mint_{B_{i}} G\left( \frac{w_i}{r^s_i}\right)d\xi\\
				&\le C2^{(Q+sq+2q)i}\left( 1+\frac{r^s\mathrm{Tail}(u_+;\xi_0,r/2)}{g(k/r^s)}\right)\mint_{B_{i}}G\left( \frac{w_i}{r^s_i}\right)d\xi.
			\end{aligned}
		\end{equation*}
		We set
		\begin{equation*}
			Y_i=\frac{\mint_{B_{i}}G(w_i/r^s_i)\, d\xi}{G(k/r^s)} ,
		\end{equation*}
		and obtain
		\begin{equation*}
			Y_{i+1}\le Cb^i\left( 1+\frac{r^s\mathrm{Tail}(u_+;\xi_0,r/2)}{g(k/r^s)}\right)^\theta Y_i^\theta
		\end{equation*}
		with $C=C(n,p,q,s)\ge 1$ and $b=2^{\theta(Q+sq+2q)}$. Picking first $k$ so large that
		\begin{equation*}
			k\ge r^sg^{-1}\left( \delta(r/2)^s\mathrm{Tail}(u_+;\xi_0,r/2)\right)
		\end{equation*}
		with $\delta\in \left( 0,1\right] $, we can discover
		\begin{equation*}
			Y_{i+1}\le Cb^i\left( 1+\frac{1}{\delta}\right)^\theta Y_i^\theta\le (\delta^{-\theta}C)b^iY_i^\theta.
		\end{equation*}
		If
		\begin{equation}
			Y_0=\frac{\mint_{B_r}G(u_+/r^s)\, d\xi}{G(k/r^s)} \le \left( \delta^{-\theta}C\right)^{\frac{1}{1-\theta}}b^{-\frac{1}{(\theta-1)^2}},
			\label{3.2}
		\end{equation}
		then by the iteration lemma (\cite[Lemma 7.1]{Giu}) there holds that $Y_i\rightarrow0$ as $i\rightarrow\infty$. At this moment, we could select
		\begin{equation*}
			k=r^sG^{-1}\left( \delta^{\frac{\theta}{1-\theta}}C^{\frac{1}{\theta-1}}b^{\frac{1}{(\theta-1)^2}}\mint_{B_r}G\left( \frac{u_+}{r^s}\right)d\xi \right)
			+r^sg^{-1}\left( \delta(r/2)^s\mathrm{Tail}(u_+;\xi_0,r/2)\right),
		\end{equation*}
		so that \eqref{3.2} is true. Eventually, the limit $\lim\limits_{i\rightarrow\infty}Y_i=0$ implies the desired result.

	\section{H\"{o}lder regularity}
    \label{sec-3}
	In this part, we are ready to justify that the weak solution, $u\in H\mathbb W^{s,G}(\Omega)\cap L^g_s(\mathbb{H}^n)$ to Eq. \eqref{main}, is locally H\"{o}lder continuous in $\Omega$. For this aim, we first establish some crucial results; see Lemmas \ref{D}--\ref{F}.\par
	We now suppose weak solutions are locally bounded and introduce some numbers $\nu^\pm$ and $\omega$ fulfilling
	\begin{equation*}
		\nu^-\le \inf_{B_R(\xi_0)}u,\quad \nu^+\ge \sup_{B_R(\xi_0)}u,\quad \omega\ge \nu^+-\nu^-,
	\end{equation*}
	where $B_R(\xi_0)\subset\subset\Omega$.

	\begin{lemma}
		\label{D}
		Let $u\in H\mathbb W^{s,G}(\Omega)\cap L^g_s(\mathbb{H}^n)$ be a locally bounded weak sub(super)-solution to Eq. \eqref{main}. Assume that for some $\beta$ and $\gamma$ in $(0,1)$, there holds that
		\begin{equation}
			\left| \left\lbrace \pm (\nu^\pm-u)\ge \gamma \omega\right\rbrace\cap B_{2r}(\xi_0) \right|\ge \beta\left| B_{2r}\right|.
			\label{4.0}
		\end{equation}
		Then for some $\sigma\in \left( 0,\frac{1}{2}\right] $, we conclude that either
		\begin{equation}
			(4r)^s\mathrm{Tail}((u-\nu^\pm)_\pm;\xi_0,R)>g\left( \frac{\sigma\gamma\omega}{(4r)^s}\right)
			\label{4.01}
		\end{equation}
		or
		\begin{equation*}
			\left| B_{2r}(\xi_0)\cap \left\lbrace \pm(\nu^\pm-u)\le \frac{\sigma\gamma\omega}{2}\right\rbrace \right|\le \frac{C\sigma^{p-1}}{\beta}\left| B_{2r}\right|
		\end{equation*}
		with $C>0$ depending only on $n,p,q$ and $s$, provided that $B_{4r}(\xi_0)\subset B_R(\xi_0)$.
	\end{lemma}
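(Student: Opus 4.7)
My plan is to argue the supersolution case (the lower sign in $\pm$); the subsolution case is entirely symmetric. Fix $\sigma\in(0,1/2]$ (to be chosen momentarily) and set $k:=\nu^-+\sigma\gamma\omega$, $w_\pm:=(u-k)_\pm$, so that $w_-\leq\sigma\gamma\omega$ throughout $B_R$ because $u\geq\nu^-$ there. The strategy is to apply the Caccioppoli estimate from Proposition \ref{A} with $\rho:=2r$ and outer radius $4r$, and to leverage the interaction term
$$\int_{B_{2r}}w_-(\xi)\int_{\mathbb{H}^n}g\!\left(\frac{w_+(\eta)}{|\eta^{-1}\circ\xi|^s_{\mathbb{H}^n}}\right)\frac{d\eta}{|\eta^{-1}\circ\xi|^{Q+s}_{\mathbb{H}^n}}\,d\xi,$$
bounding it below through the hypothesis \eqref{4.0} and above through the negation of \eqref{4.01}, and finally converting an $L^1$-bound on $w_-$ into a measure estimate via Chebyshev's inequality.

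For the lower bound, I observe that on $\{u\geq\nu^-+\gamma\omega\}\cap B_{2r}$ one has $w_+\geq(1-\sigma)\gamma\omega\geq\gamma\omega/2$, while for $\xi,\eta\in B_{2r}$ the triangle inequality gives $|\eta^{-1}\circ\xi|_{\mathbb{H}^n}\leq Cr$. Restricting the inner integral to this set, invoking \eqref{4.0}, and using the doubling scaling of $g$ inherited from \eqref{02}, I get the uniform pointwise bound
$$\int_{\mathbb{H}^n}g\!\left(\frac{w_+(\eta)}{|\eta^{-1}\circ\xi|^s_{\mathbb{H}^n}}\right)\frac{d\eta}{|\eta^{-1}\circ\xi|^{Q+s}_{\mathbb{H}^n}}\geq \frac{c\beta}{r^s}\,g\!\left(\frac{\gamma\omega}{r^s}\right),\qquad \xi\in B_{2r}.$$
Discarding the nonnegative Gagliardo piece on the left of Proposition \ref{A} then bounds the left-hand side from below by $c\beta r^{-s}g(\gamma\omega/r^s)\int_{B_{2r}}w_-\,d\xi$.

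For the right-hand side I will exploit $w_-\leq\sigma\gamma\omega$ in $B_{4r}$: combined with \eqref{1-1} and $G(t)\leq tg(t)/p$ (coming from \eqref{02}), the local integral produces at most $C\sigma^p\gamma\omega r^{-s}g(\gamma\omega/r^s)|B_{2r}|$. For the nonlocal piece $\|w_-\|_{L^1(B_{4r})}\mathrm{Tail}(w_-;\xi_0,4r)$, the plan is to split $w_-\leq\sigma\gamma\omega+(u-\nu^-)_-$, note that $(u-\nu^-)_-\equiv 0$ on $B_R\supset B_{4r}$ so that $\mathrm{Tail}((u-\nu^-)_-;\xi_0,4r)=\mathrm{Tail}((u-\nu^-)_-;\xi_0,R)$, and then combine the failure of \eqref{4.01} with the direct estimate $\mathrm{Tail}(\sigma\gamma\omega;\xi_0,4r)\leq C\sigma^{p-1}r^{-s}g(\gamma\omega/r^s)$ to conclude $\mathrm{Tail}(w_-;\xi_0,4r)\leq C\sigma^{p-1}r^{-s}g(\gamma\omega/r^s)$. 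Together with $\|w_-\|_{L^1(B_{4r})}\leq\sigma\gamma\omega|B_{4r}|$, this controls the nonlocal contribution also by $C\sigma^p\gamma\omega r^{-s}g(\gamma\omega/r^s)|B_{2r}|$.

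Cancelling the common factor $r^{-s}g(\gamma\omega/r^s)$ yields $\int_{B_{2r}}w_-\,d\xi\leq C\beta^{-1}\sigma^p\gamma\omega|B_{2r}|$, and since $\{w_-\geq\sigma\gamma\omega/2\}\cap B_{2r}$ coincides with $\{u-\nu^-\leq\sigma\gamma\omega/2\}\cap B_{2r}$, Chebyshev delivers the required measure bound with the claimed $\sigma^{p-1}/\beta$ dependence. The most delicate step to work out carefully will be the tail analysis, specifically converting the bound on $\mathrm{Tail}((u-\nu^-)_-;\xi_0,R)$ into a bound on $\mathrm{Tail}(w_-;\xi_0,4r)$ while losing only a factor $\sigma^{p-1}$; the nonlinearity of $g$ and the asymmetric role of $\sigma$ force this to pass through \eqref{02} and \eqref{1-5} rather than through any naive additivity, and this is precisely the point at which the basic structural hypothesis on $G$ is exploited.
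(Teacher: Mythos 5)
Your argument is correct and mirrors the paper's proof in all essentials: you apply Proposition~\ref{A} with the same level $k=\nu^-+\sigma\gamma\omega$ and radii $(2r,4r)$, extract a uniform lower bound on the inner nonlocal integral from \eqref{4.0} exactly as the paper does in its display (4.4), split $w_-\le\sigma\gamma\omega+(u-\nu^-)_-$ for the tail exactly as in (4.2), and invoke the negation of \eqref{4.01} to absorb everything into $C\sigma^p G(\gamma\omega/(4r)^s)|B_{2r}|$. The only cosmetic difference is that you keep the full integral $\int_{B_{2r}}w_-$ and finish with Chebyshev, whereas the paper restricts the outer integral to $\{w_-\ge\sigma\gamma\omega/2\}$ directly, which is the same computation.
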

	\begin{proof}
		We just show this statement for supersolution, as the case of subsolution is similar. Let
		\begin{equation*}
			\bar{u}:=u-\nu^-\quad \text{and} \quad w:=(\bar{u}-k)
		\end{equation*}
		with $k=\sigma\gamma\omega$. Use Proposition \ref{A} to have
		\begin{equation}
			\begin{aligned}
				&\quad\ \int_{B_{2r}}w_-(\xi)\left[ \int_{B_{2r}}g\left( \frac{w_+(\eta)}{|\eta^{-1}\circ\xi|^{s}_{\mathbb{H}^n}}\right) \frac{d\eta}{|\eta^{-1}\circ\xi|^{Q+s}_{\mathbb{H}^n}}\right] d\xi\\
				&\le C\left( \frac{4r}{4r-2r}\right)^q\int_{B_{4r}}G\left( \frac{w_-}{(4r)^s}\right)d\xi+C\left( \frac{4r}{4r-2r}\right)^{Q+sq}||w_-||_{L^1(B_{4r})}\mathrm{Tail}(w_-;\xi_0,4r)\\
				&\le CG\left( \frac{k}{(4r)^s}\right)\left| B_{4r}\right|+Ck\left| B_{4r}\right|\mathrm{Tail}(w_-;\xi_0,4r),
			\end{aligned}\label{4.1}
		\end{equation}
		where we notice that $u-\nu^-\ge 0$ in $B_{4r}$. Next we deal with the nonlocal tail in the last line. 
		By means of the properties of $g$ and $u-\nu^-\ge 0$ in $B_R$,
			\begin{align}
				\label{4.2}
				 \mathrm{Tail}(w_-;\xi_0,4r)
				&\le \int_{\mathbb{H}^n\setminus B_R}g\left( \frac{\bar{u}_-+k}{|\xi_0^{-1}\circ\xi|^{s}_{\mathbb{H}^n}}\right) \frac{d\xi}{|\xi_0^{-1}\circ\xi|^{Q+s}_{\mathbb{H}^n}}\nonumber\\
				&\quad+\int_{B_R\setminus B_{4r}}g\left( \frac{k}{|\xi_0^{-1}\circ\xi|^{s}_{\mathbb{H}^n}}\right) \frac{d\xi}{|\xi_0^{-1}\circ\xi|^{Q+s}_{\mathbb{H}^n}}\nonumber\\
				&\le C(p,q)\int_{\mathbb{H}^n\setminus B_R}g\left( \frac{\bar{u}_-}{|\xi_0^{-1}\circ\xi|^{s}_{\mathbb{H}^n}}\right) \frac{d\xi}{|\xi_0^{-1}\circ\xi|^{Q+s}_{\mathbb{H}^n}}\nonumber\\
				&\quad+C(p,q)\int_{\mathbb{H}^n\setminus B_{4r}}g\left( \frac{k}{|\xi_0^{-1}\circ\xi|^{s}_{\mathbb{H}^n}}\right) \frac{d\xi}{|\xi_0^{-1}\circ\xi|^{Q+s}_{\mathbb{H}^n}}\nonumber\\
				&\le C\mathrm{Tail}(\bar{u}_-;\xi_0,R)+C(4r)^{(p-1)s}g\left( \frac{k}{(4r)^s}\right) \int_{\mathbb{H}^n\setminus B_{4r}}\frac{d\xi}{|\xi_0^{-1}\circ\xi|^{Q+sp}_{\mathbb{H}^n}}\nonumber\\
				&\le C\mathrm{Tail}(\bar{u}_-;\xi_0,R)+C\frac{1}{4r}g\left( \frac{k}{(4r)^s}\right)
			\end{align}
		with $C=C(Q,p,q,s)>0$. In view of \eqref{4.1} and \eqref{4.2}, we find
			\begin{align}
				\label{4.3}
				&\quad\ \int_{B_{2r}}w_-(\xi)\left[ \int_{B_{2r}}g\left( \frac{w_+(\eta)}{|\eta^{-1}\circ\xi|^{s}_{\mathbb{H}^n}}\right) \frac{d\eta}{|\eta^{-1}\circ\xi|^{Q+s}_{\mathbb{H}^n}}\right] d\xi\nonumber\\
				&\le CG\left( \frac{k}{(4r)^s}\right)\left| B_{4r}\right|+Ck\mathrm{Tail}(\bar{u}_-;\xi_0,R)\left| B_{4r}\right| \nonumber\\
				&\le CG\left( \frac{k}{(4r)^s}\right)\left| B_{4r}\right|\nonumber\\
&\le C\sigma^pG\left( \frac{\gamma\omega}{(4r)^s}\right) \left| B_{4r}\right| ,
			\end{align}
		where the converse of the condition \eqref{4.01} was applied.\par
		On the other hand, the integral at the left-hand side can be estimated as blow,
			\begin{align}
				\label{4.4}
				&\ \quad\int_{B_{2r}}(\bar{u}-k)_-\left[ \int_{B_{2r}}g\left( \frac{(\bar{u}-k)_+}{|\eta^{-1}\circ\xi|^{s}_{\mathbb{H}^n}}\right) \frac{d\eta}{|\eta^{-1}\circ\xi|^{Q+s}_{\mathbb{H}^n}}\right] d\xi\nonumber\\
				&\ge \int_{B_{2r}\cap \left\lbrace \bar{u}\le \frac{k}{2}\right\rbrace }(\bar{u}-k)_-
				\left[ \int_{B_{2r}\cap \left\lbrace \bar{u}\ge \gamma\omega\right\rbrace}g\left( \frac{(\bar{u}-k)_+}{|\eta^{-1}\circ\xi|^{s}_{\mathbb{H}^n}}\right) \frac{d\eta}{|\eta^{-1}\circ\xi|^{Q+s}_{\mathbb{H}^n}}\right] d\xi\nonumber\\
				&\ge \frac{k}{2}\frac{1}{(4r)^{Q+s}}g\left( \frac{\gamma\omega-k}{(4r)^s}\right)
				\left| B_{2r}\cap \left\lbrace \bar{u}\le k/2\right\rbrace\right|
				\left| B_{2r}\cap \left\lbrace \bar{u}\ge \gamma\omega\right\rbrace\right| \nonumber\\
				&\ge \frac{k}{2}\frac{\beta}{(4r)^{Q+s}}g\left( \frac{\gamma\omega(1-\sigma)}{(4r)^s}\right)
				\left| B_{2r}\right| \left| B_{2r}\cap \left\lbrace \bar{u}\le k/2\right\rbrace\right|\nonumber\\
				&\ge \frac{\sigma\gamma\omega}{C}\frac{\beta}{(4r)^{s}}g\left( \frac{\gamma\omega}{2(4r)^s}\right)\left| B_{2r}\cap \left\lbrace \bar{u}\le k/2\right\rbrace\right|\nonumber\\
				&\ge \frac{\sigma\beta}{C}G\left( \frac{\gamma\omega}{(4r)^s}\right) \left| B_{2r}\cap \left\lbrace \bar{u}\le k/2\right\rbrace\right|,
			\end{align}
		where in the third line we need note $|\eta^{-1}\circ\xi|_{\mathbb{H}^n}\le 4r$ and the condition \eqref{4.0} is used in the penultimate inequality. Therefore, it yields from \eqref{4.3} and \eqref{4.4} that
		\begin{equation*}
			\left| B_{2r}\cap \left\lbrace u-\nu^-\le\frac{\sigma\gamma\omega}{2}\right\rbrace \right|\le \frac{\sigma^{p-1}C}{\beta}\left| B_{2r}\right|
		\end{equation*}
		for the positive constant $C=C(n,p,q,s)$.
	\end{proof}

	The next result concerns a De Giorgi-type lemma.

	\begin{lemma}
		\label{E}
		Let $u\in H\mathbb W^{s,G}(\Omega)\cap L^g_s(\mathbb{H}^n)$ be a locally bounded weak sub(super)-solution to \eqref{main}. Then there is a constant $\mu\in (0,1)$, depending only on $n,p,q,s$, such that if
		\begin{equation}
			\left| B_{2r}(\xi_0)\cap \left\lbrace \pm(\nu^\pm-u)\le \gamma\omega\right\rbrace \right|\le \mu \left| B_{2r}\right|
			\label{5.1}
		\end{equation}
		with some $\gamma\in (0,1)$, then either
		\begin{equation}
			r^s\mathrm{Tail}((u-\nu^\pm)_\pm;\xi_0,R)>g\left( \frac{\gamma\omega}{r^s}\right)
			\label{5.2}
		\end{equation}
		or
		\begin{equation}
			\pm(\nu^\pm-u)\ge \frac{1}{2}\gamma\omega\qquad in\ B_r(\xi_0),
			\label{5.3}
		\end{equation}
		whenever $B_{2r}(\xi_0)\subset B_R(\xi_0)$.
	\end{lemma}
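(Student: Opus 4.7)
I treat the supersolution case, the subsolution case being dual. Set $\bar u:=u-\nu^-$, so that $\bar u\ge 0$ on $B_R(\xi_0)$ and \eqref{5.3} reduces to the claim $\bar u\ge \gamma\omega/2$ on $B_r(\xi_0)$. If the tail alternative \eqref{5.2} fails, i.e., $r^s\mathrm{Tail}(\bar u_-;\xi_0,R)\le g(\gamma\omega/r^s)$, then a De Giorgi iteration will deliver the pointwise bound. For $j\ge 0$, let
$$
r_j:=r+2^{-j}r,\qquad B_j:=B_{r_j}(\xi_0),\qquad k_j:=\tfrac{\gamma\omega}{2}(1+2^{-j}),\qquad w_j:=(\bar u-k_j)_-,
$$
so that $r_j\downarrow r$, $k_j\downarrow \gamma\omega/2$, $0\le w_j\le k_j\le\gamma\omega$ on $B_R$, $\{w_j>0\}=\{\bar u<k_j\}$, and $w_{j+1}\le w_j$.

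The iteration combines three ingredients, paralleling the argument used in Theorem \ref{C}. First, Proposition \ref{A} applied to $w_-=w_j$ with level $k=k_j$ and radii $(\rho,r)=(r_{j+1},r_j)$ yields a Caccioppoli-type estimate whose right-hand side has the shape
$$
C2^{qj}\int_{B_j}G\bigl(w_j/r_j^s\bigr)\,d\xi\;+\;C2^{(Q+sq)j}\|w_j\|_{L^1(B_j)}\,\mathrm{Tail}(w_j;\xi_0,r_j).
$$
Second, I control the tail exactly as in Lemma \ref{D}: split $\mathbb{H}^n\setminus B_{r_j}$ along $\partial B_R$, use $w_j\le k_j\le \gamma\omega$ on the annulus $B_R\setminus B_{r_j}$ to extract a bound $Cr^{-s}g(\gamma\omega/r^s)$ via polar integration and the $p$-lower bound in \eqref{02}, and use $w_j\le \bar u_-+k_j$ on $\mathbb{H}^n\setminus B_R$ together with the negation of \eqref{5.2}, arriving at
$$
\mathrm{Tail}(w_j;\xi_0,r_j)\le Cr^{-s}g(\gamma\omega/r^s).
$$
Third, I feed the Caccioppoli bound into Lemma \ref{B} (Sobolev--Poincar\'e), absorbing the zero-mean correction through Jensen's inequality $G\bigl((w_j)_{B_{j+1}}/r_{j+1}^s\bigr)\le \mint_{B_{j+1}}G(w_j/r_{j+1}^s)\,d\xi$, and invoke the pointwise Orlicz monotonicity: since $w_j\ge k_j-k_{j+1}=2^{-j-2}\gamma\omega$ on $\{w_{j+1}>0\}$ while $w_{j+1}\le w_j$,
$$
G^{\theta-1}\bigl(2^{-j-2}\gamma\omega/r_{j+1}^s\bigr)\,G(w_{j+1}/r_{j+1}^s)\le G^{\theta}(w_j/r_{j+1}^s)\qquad\text{pointwise on } B_{j+1}.
$$
Integrating, taking the $\theta$-th root, and using \eqref{1-1} to compare $G(2^{-j-2}\gamma\omega/r_{j+1}^s)$ with $G(\gamma\omega/r^s)$ at the cost of a factor $2^{qj}$, one reaches a recursion
$$
Y_{j+1}\le Cb^j Y_j^{\theta},\qquad Y_j:=\frac{1}{G(\gamma\omega/r^s)}\mint_{B_j}G(w_j/r_j^s)\,d\xi,
$$
with $b>1$, $C\ge 1$ depending only on $n,p,q,s$.

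The standard iteration lemma \cite[Lemma 7.1]{Giu} then yields $Y_j\to 0$, hence $w_\infty\equiv 0$ a.e.\ on $B_r$, which is exactly \eqref{5.3}, provided $Y_0\le C^{-1/(\theta-1)}b^{-1/(\theta-1)^2}=:\mu$. Since $0\le w_0\le\gamma\omega$ pointwise and $\{w_0>0\}\subset\{\bar u<\gamma\omega\}\cap B_{2r}$, hypothesis \eqref{5.1} forces $Y_0\le \mu$ for this choice of $\mu$, closing the argument. The main obstacle I anticipate is the second step: each geometric shrinkage $r_{j+1}<r_j$ manufactures a factor $2^{(Q+sq)j}$ in the tail, which must be absorbed simultaneously by the Sobolev gain $\theta>1$ and by the Orlicz monotonicity. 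Because $G$ is only sandwiched between $t^p$ and $t^q$ via \eqref{02}, the classical $p$-Laplacian computations do not transfer verbatim and the full strength of \eqref{1-1}--\eqref{1-5}, combined with the non-Euclidean polar integration, is what keeps the recursion exponent strictly above $1$.
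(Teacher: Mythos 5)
Your overall strategy matches the paper's proof step by step: the same choice of radii $r_j=r(1+2^{-j})$ and levels $k_j=\tfrac{\gamma\omega}{2}(1+2^{-j})$, Proposition~\ref{A} for the energy bound, the same tail splitting as in Lemma~\ref{D}, the Sobolev--Poincar\'e inequality of Lemma~\ref{B} with Jensen to absorb the mean, and the fast geometric convergence lemma triggered by \eqref{5.1}. Your verification that the initial quantity is at most $\mu$, and your conclusion from $Y_j\to 0$, are also fine.

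The gap is in the claimed one-step recursion. With your choice
$Y_j := G(\gamma\omega/r^s)^{-1}\mint_{B_j}G(w_j/r_j^s)\,d\xi$,
the recursion $Y_{j+1}\le Cb^jY_j^\theta$ does not close. After inserting the tail estimate $\mathrm{Tail}(w_j;\xi_0,r_j)\lesssim r^{-s}g(\gamma\omega/r^s)$ and the bound $\|w_j\|_{L^1(B_j)}\le \gamma\omega\,|A^-_j|$ (which is essentially forced, since the $L^1$ norm sees only how large the super-level set is, not how small $w_j$ is on it), the nonlocal term in Proposition~\ref{A} is of size $2^{(Q+sq)j}G(\gamma\omega/r^s)\,|A^-_j|/|B_j|$. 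The quantity $|A^-_j|/|B_j|$ is \emph{not} controlled by your $Y_j$; since $w_j\le\gamma\omega$ on $B_j$, the inequality goes the \emph{wrong} way, $Y_j\le |A^-_j|/|B_j|$, and the two quantities can differ by an arbitrary factor when $w_j$ is small on a large part of $A^-_j$. So your Caccioppoli--Sobolev step actually yields $Y_{j+1}\lesssim b^j\bigl(|A^-_j|/|B_j|\bigr)^\theta$, not $Y_{j+1}\lesssim b^jY_j^\theta$, and the fast-convergence lemma does not apply as stated. The paper avoids this by iterating directly on $Z_j:=|A^-_j|/|B_j|$: the local $G$-energy term satisfies $\int_{B_j}G(w_j/r_j^s)\,d\xi\le G(\gamma\omega/r^s)|A^-_j|$, the tail term is bounded by the same quantity, and the Chebyshev-type lower bound (your ``pointwise Orlicz monotonicity'' step applied to the full $\theta$-power, $\int_{B_{j+1}}G^\theta(w_j/r_{j+1}^s)\ge G^\theta((k_j-k_{j+1})/r_{j+1}^s)|A^-_{j+1}|$) then closes the loop into $Z_{j+1}\le C2^{\theta(Q+sq+2q)j}Z_j^\theta$. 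Switching to $Z_j$ (for which $Z_0\le\mu$ follows directly from \eqref{5.1}) repairs your argument without changing anything else.
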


	\begin{proof}
		We just prove this assertion for supersolution. Suppose \eqref{5.2} is not true, i.e.,
		\begin{equation}
			r^s\mathrm{Tail}((u-\nu^-)_-;\xi_0,R)\le g\left( \frac{\gamma\omega}{r^s}\right).
		    \label{5.4}
		\end{equation}
		We now justify \eqref{5.3}. For $i\in \mathbb{N}\cup \left\lbrace 0\right\rbrace $, define
		\begin{equation*}
			r_i=r(1+2^{-i}),\ B_i=B_{r_i}(\xi_0),\ k_i=\frac{k}{2}(1+2^{-i})
		\end{equation*}
		with $k=\gamma\omega$, and
		\begin{equation*}
			w_i=(\bar{u}-k_i)_-:=(u-\nu^--k_i)_-.
		\end{equation*}
		Through exploiting Proposition \ref{A} with $w_-:=w_i,r:=r_i$ and $\rho:=r_{i+1}$, we have
			\begin{align}
				\label{5.5}
				&\quad\ \int_{B_{i+1}}\int_{B_{i+1}}
				G\left( \frac{|w_i(\xi)-w_i(\eta)|}{|\eta^{-1}\circ\xi|^{s}_{\mathbb{H}^n}}\right) \frac{d\xi d\eta}{|\eta^{-1}\circ\xi|^{Q}_{\mathbb{H}^n}}\nonumber\\
				&\le C\left( \frac{r_i}{r_i-r_{i+1}}\right)^q\int_{B_{i}}G\left( \frac{w_i}{r^s_i}\right)d\xi
				+C\left( \frac{r_i}{r_i-r_{i+1}}\right)^{Q+sq}||w_i||_{L^1(B_i)}\mathrm{Tail}(w_i;\xi_0,r_i)\nonumber\\
				&\le C2^{qi}G\left( \frac{k}{r^s}\right)|A^-_i|+C2^{(Q+sq)i}k|A^-_i|\mathrm{Tail}(w_i;\xi_0,r_i),
			\end{align}
		where we utilized the fact $\bar{u}\ge 0$ in $B_i$, and $A^-_i$ means
		\begin{equation*}
			A^-_i:=B_i\cap \left\lbrace \bar{u}\le k_i\right\rbrace.
		\end{equation*}
		Following the computation of \eqref{4.2}, we easily get
		\begin{equation}
			\mathrm{Tail}(w_i;\xi_0,r_i)\le C\mathrm{Tail}(\bar{u}_-;\xi_0,R)+\frac{C}{r_i^s}g\left( \frac{k_i}{r^s_i}\right).
			\label{5.6}
		\end{equation}
		By virtue of \eqref{5.4}, \eqref{5.5} and \eqref{5.6},
		\begin{equation}
				\int_{B_{i+1}}\int_{B_{i+1}}
				G\left( \frac{|w_i(\xi)-w_i(\eta)|}{|\eta^{-1}\circ\xi|^{s}_{\mathbb{H}^n}}\right) \frac{d\xi d\eta}{|\eta^{-1}\circ\xi|^{Q}_{\mathbb{H}^n}}
				\le C2^{(Q+sq+q)i}G\left( \frac{k}{r^s}\right)|A^-_i| .
				\label{5.7}
		\end{equation}\par
		On the other hand, with the help of Lemma \ref{B}, Jensen's inequality and \eqref{5.7}, there holds
		\begin{equation*}
			\begin{aligned}
				&\quad\ \left(\mint_{B_{i+1}}G^\theta\left( \frac{w_i}{r^s_{i+1}}\right)d\xi  \right) ^{\frac{1}{\theta}}\\
				&\le C \left(\mint_{B_{i+1}}G^{\theta}\left( \frac{|w_i-(w_i)_{B_{i+1}}|}{r^s_{i+1}}\right) d\xi  \right) ^{\frac{1}{\theta}}
				+C\mint_{B_{i+1}}G\left( \frac{w_i}{r^s_{i+1}}\right)d\xi\\
				&\le C \mint_{B_{i+1}}\int_{B_{i+1}}G\left( \frac{|w_i(\xi)-w_i(\eta)|}{|\eta^{-1}\circ\xi|^{s}_{\mathbb{H}^n}}\right) \frac{d\xi d\eta}{|\eta^{-1}\circ\xi|^{Q}_{\mathbb{H}^n}}
				+C\mint_{B_{i+1}}G\left( \frac{w_i}{r^s_{i+1}}\right)d\xi\\
				&\le C2^{(Q+sq+q)i}G\left( \frac{k}{r^s}\right)\frac{|A^-_i|}{|B_i|} .
			\end{aligned}
		\end{equation*}
		We proceed to estimate
		\begin{equation*}
			\begin{aligned}
				\left(\mint_{B_{i+1}}G^\theta\left( \frac{w_i}{r^s_{i+1}}\right)d\xi  \right) ^{\frac{1}{\theta}}
				&\ge \frac{1}{|B_{i+1}|^{\theta-1}}\left(\int_{A^-_{i+1}}G^\theta\left( \frac{w_i}{r^s_{i+1}}\right)d\xi  \right) ^{\frac{1}{\theta}}\\
				&\ge |B_{i+1}|^{-\frac{1}{\theta}}\left(\int_{A^-_{i+1}}G^\theta\left( \frac{k_i-k_{i+1}}{r^s_{i+1}}\right)d\xi  \right) ^{\frac{1}{\theta}}\\
				&\ge C2^{-qi}G\left( \frac{k}{r^s}\right) \left( \frac{|A^-_{i+1}|}{|B_{i+1}|}\right) ^{\frac{1}{\theta}},
			\end{aligned}
		\end{equation*}
		where the constant $\theta=\theta(Q,s)>1$ comes from Lemma \ref{B}. Merging the previous two displays arrives at
		\begin{equation*}
			\frac{|A^-_{i+1}|}{|B_{i+1}|}\le C2^{\theta(Q+sq+2q)i}\left( \frac{|A^-_i|}{|B_i|}\right)^\theta.
		\end{equation*}
		Set
$$
Y_i=\frac{|A^-_i|}{|B_i|}.
$$
We employ the iteration lemma (\cite[Lemma 7.1]{Giu}) to infer $Y_i\rightarrow0$ as $i\rightarrow\infty$, provided
		\begin{equation*}
			Y_0\le C^{-\frac{1}{\theta-1}}2^{-\theta(Q+sq+2q)\frac{1}{(\theta-1)^2}}.
		\end{equation*}
		This can be assured by taking $\mu=C^{-\frac{1}{\theta-1}}2^{-\theta(Q+sq+2q)\frac{1}{(\theta-1)^2}}$ in \eqref{5.1}. Hence we can conclude $\bar{u}\ge \frac{k}{2},\ \text{i.e.,}\ u-\nu^-\ge \frac{\gamma\omega}{2}\ \text{in}\ B_r$.
	\end{proof}

	As a consequence of Lemmas \ref{D} and \ref{E}, we derive the following growth lemma.

	\begin{corollary}
		\label{F}
		Let $u\in H\mathbb W^{s,G}(\Omega)\cap L^g_s(\mathbb{H}^n)$ be a locally bouneded weak sub(super)-solution to Eq. \eqref{main}. If $B_{2r}(\xi_0)\subset B_R(\xi_0)$ and, for some $\beta,\gamma\in (0,1)$,
		\begin{equation}
			\left|
			B_r(\xi_0)\cap \left\lbrace \pm (\nu^\pm-u)\ge \gamma\omega\right\rbrace
			\right| \ge \beta |B_r|,
			\label{6.1}
		\end{equation}
		then there exists a constant $\tau\in\left( 0,\frac{1}{2}\right] $, depending only upon $n,p,q,s$ and $\beta$, such that either
		\begin{equation*}
			r^s\mathrm{Tail}((u-\nu^\pm)_\pm;\xi_0,R)>g\left( \frac{\tau\gamma\omega}{r^s}\right)
		\end{equation*}
		or
		\begin{equation*}
			\pm(\nu^\pm-u)\ge \tau\gamma\omega\qquad\text{in}\ B_{r}(\xi_0).
		\end{equation*}
	\end{corollary}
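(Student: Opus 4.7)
The proof should cascade Lemma \ref{D} into Lemma \ref{E}, essentially chaining a measure-propagation step with a De Giorgi-type reduction. I treat only the supersolution case, the subsolution case being symmetric.

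First, I would apply Lemma \ref{D} at radius $r/2$ (playing the role of ``$r$'' in the statement of that lemma). The containment $B_{2r}(\xi_0)\subset B_R(\xi_0)$ assumed in Corollary \ref{F} is exactly the required $B_{4\cdot r/2}(\xi_0)\subset B_R(\xi_0)$, and the hypothesis \eqref{6.1} matches \eqref{4.0} verbatim. Consequently, for any $\sigma\in(0,1/2]$ still at our disposal, either the tail alternative \eqref{4.01} holds---in which case a rescaling using the polynomial-type comparison for $g$ stemming from \eqref{02} absorbs the prefactor $2^s$ and yields the first alternative of the corollary---or the density-decrease
$$
\left|B_r(\xi_0)\cap\left\{u-\nu^-\le \frac{\sigma\gamma\omega}{2}\right\}\right|\le \frac{C\sigma^{p-1}}{\beta}|B_r|
$$
is in force.

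Second, I would select $\sigma=\sigma(n,p,q,s,\beta)\in(0,1/2]$ small enough that $C\sigma^{p-1}/\beta\le\mu$, where $\mu\in(0,1)$ is the universal constant from Lemma \ref{E}; this is possible precisely because $p>1$ forces $\sigma^{p-1}\to 0$ as $\sigma\to 0$. With this choice, the density-decrease above is exactly the hypothesis \eqref{5.1} of Lemma \ref{E} applied at radius $r/2$ with $\gamma$ there replaced by $\sigma\gamma/2$. That lemma then delivers either its own tail alternative \eqref{5.2}---again absorbed into the first alternative of the corollary by means of \eqref{02}---or the pointwise lower bound $u-\nu^-\ge \sigma\gamma\omega/4$ on the inner ball, which gives the second alternative with $\tau$ proportional to $\sigma$.

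The only genuine difficulty lies in the bookkeeping at the tail-alternative side: Lemma \ref{D} expresses its tail in terms of $(2r)^s$ while Lemma \ref{E} uses $(r/2)^s$, so one must invoke the two-sided power comparison $a^q G(t)\le G(at)\le a^p G(t)$ for $a\in(0,1)$ (and the analogue for $g$) to unify both into the single form $r^s\mathrm{Tail}((u-\nu^\pm)_\pm;\xi_0,R)>g(\tau\gamma\omega/r^s)$ stated in the corollary, at the price of possibly shrinking $\tau$. Once this is done, the final constant $\tau\in(0,1/2]$ depends only on $n,p,q,s$ and $\beta$ (through the choice of $\sigma$), as required.
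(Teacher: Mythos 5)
Your cascade of Lemma \ref{D} into Lemma \ref{E}, with $\sigma$ chosen small enough that $C\sigma^{p-1}/\beta\le\mu$, is exactly the paper's strategy. The gap is in the choice of scale. By invoking both lemmas at radius $r/2$, you obtain from Lemma \ref{E} (applied with $\gamma$ there replaced by $\sigma\gamma/2$) the pointwise alternative $\pm(\nu^\pm-u)\ge\sigma\gamma\omega/4$ only in $B_{r/2}(\xi_0)$, not in $B_r(\xi_0)$ as Corollary \ref{F} asserts. The $g$-rescalings from \eqref{02} that you correctly use to unify the two tail alternatives do not help here: the defect lies in the \emph{domain} of the pointwise conclusion, and no shrinking of $\tau$ can enlarge $B_{r/2}$ to $B_r$.

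The paper keeps both applications at radius $r$. It first upgrades \eqref{6.1} trivially to the larger ball: since $\lbrace\pm(\nu^\pm-u)\ge\gamma\omega\rbrace\cap B_r\subset\lbrace\pm(\nu^\pm-u)\ge\gamma\omega\rbrace\cap B_{2r}$ and $|B_{2r}|=2^Q|B_r|$, one gets $|\lbrace\pm(\nu^\pm-u)\ge\gamma\omega\rbrace\cap B_{2r}|\ge 2^{-Q}\beta|B_{2r}|$, which is \eqref{4.0} at radius $r$ with $\beta$ replaced by $2^{-Q}\beta$. Lemma \ref{D} at radius $r$ then produces a measure-decay on $B_{2r}$, which is precisely the hypothesis \eqref{5.1} of Lemma \ref{E} at radius $r$, whose second alternative lives on $B_r$. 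The price is that Lemma \ref{D} at radius $r$ requires $B_{4r}(\xi_0)\subset B_R(\xi_0)$, slightly stronger than the $B_{2r}\subset B_R$ written in the corollary; your $r/2$-scaling was evidently designed to match the written hypothesis exactly, but as a result it proves only the weaker $B_{r/2}$ conclusion. Since every invocation of Corollary \ref{F} in Sections \ref{sec-3} and \ref{sec-4} has $r\le R/4$, the stronger containment $B_{4r}\subset B_R$ is always available, and the scale-$r$ route is the one that delivers the stated conclusion.
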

	\begin{proof}
		By \eqref{6.1}, we can see
		\begin{equation*}
			\left|
			\left\lbrace \pm(\nu^\pm-u)\ge \gamma\omega\right\rbrace \cap B_{2r}(\xi_0)
			\right|
			\ge \frac{\beta}{2^Q}|B_{2r}|.
		\end{equation*}
		According to Lemmas \ref{D} and \ref{E}, we could choose $\sigma\in \left( 0,\frac{1}{2}\right] $ so small that $\frac{C\sigma^{p-1}}{\beta}$ is not larger than $\mu$. Then this conclusion follows readily after simple arrangement.
	\end{proof}
	
	In what follows, we are going to prove the H\"{o}lder continuity of solutions, Theorem \ref{thm2}, by induction. The key points of proof are to choose suitably decreasing sequences of radii $\{r_j\}$ and of bound on oscillation $\{\omega_j\}$, and moreover to control precisely the nonlocal tail at each step in the process of iteration.
	
	\subsection{Step 1 of induction}
	Fix $\xi_0\in \Omega$ and $B_R=B_R(\xi_0)\subset\subset \Omega$. Set
	\begin{equation*}
		\nu^+=\sup_{B_R}u,\ \nu^-=\inf_{B_R}u\quad \text{and}\quad \omega=2\sup_{B_R}|u|+R^sg^{-1}\left( R^s\mathrm{Tail}(u;\xi_0,R)\right).
	\end{equation*}
	Obviously,
	\begin{equation}
		\label{2.1-1}
		\osc\limits_{B_R}u=\nu^+-\nu^-\le \omega,
	\end{equation}
	which is the start towards H$\ddot{\text{o}}$lder continuity of $u$. Let $\lambda\in \left( 0,\frac{1}{4}\right) $ be a number to be fixed later. Suppose with no loss of generality $\nu^+-\nu^->\frac{\omega}{2}$. Then one of the following scenarios
	\begin{equation*}
		\left\lbrace
		\begin{aligned}
			\left| \left\lbrace u-\nu^->\frac{1}{4}\omega\right\rbrace \cap B_{\lambda R}(\xi_0)\right|\ge \frac{1}{2}\left| B_{\lambda R}\right|  \\
			\left| \left\lbrace \nu^+-u>\frac{1}{4}\omega\right\rbrace \cap B_{\lambda R}(\xi_0)\right|\ge \frac{1}{2}\left| B_{\lambda R}\right|
		\end{aligned}
		\right.
	\end{equation*}
	must hold true. Assume the first alternative holds, since the other is similar. We this time apply Corollary \ref{F} with $\beta=\frac{1}{2},\gamma=\frac{1}{4}$ and $r=\lambda R$ to discover that either
	\begin{equation}
		\label{2.1-2}
		(\lambda R)^s\mathrm{Tail}((u-\nu^-)_-;\xi_0,R)>g\left( \frac{\tau\omega}{(\lambda R)^s}\right)
	\end{equation}
	or
	\begin{equation*}
		u-\nu^-\ge \tau\omega\quad \text{in }B_{\lambda R}(\xi_0),
	\end{equation*}
	which together with \eqref{2.1-1} indicates
	\begin{equation*}
		\osc\limits_{B_{\lambda R}}u=\sup_{B_{\lambda R}}(u-\nu^-)-\inf_{B_{\lambda R}}(u-\nu^-)\le (1-\tau)\omega,
	\end{equation*}
	where $\tau\in \left( 0,\frac{1}{2}\right) $ depends only on $n,p,q,s$.\par
	Now we will take a suitable $\lambda$ to guarantee \eqref{2.1-2} does not occur. Via the definition of $\omega$, the nonlocal tail is estimated as
	\begin{equation*}
		\begin{aligned}
			&\quad R^s\mathrm{Tail}((u-\nu^-)_-;\xi_0,R)\\
			&\le CR^s\int_{\mathbb{H}^n\setminus B_R}\left[ g\left( \frac{u_-}{|\xi_0^{-1}\circ\xi|^{s}_{\mathbb{H}^n}}\right)
			+g\left( \frac{\nu^-}{|\xi_0^{-1}\circ\xi|^{s}_{\mathbb{H}^n}}\right)
			\right]
			\frac{d\xi}{|\xi_0^{-1}\circ\xi|^{Q+s}_{\mathbb{H}^n}}\\
			&\le CR^s\mathrm{Tail}(u_-;\xi_0,R)+CR^sg\left( \frac{\omega}{R^s}\right)\int_{\mathbb{H}^n\setminus B_R}\frac{R^{s(p-1)}}{|\xi_0^{-1}\circ\xi|^{Q+sp}_{\mathbb{H}^n}}d\xi\\
			&\le Cg\left( \frac{\omega}{R^s}\right),
		\end{aligned}
	\end{equation*}
	where we used
	\begin{equation*}
		g(a+b)\le C(p,q)\left( g(a)+g(b)\right)\quad \text{and}\quad g(\tau a)\le  c(p,q)\tau^{p-1}g(a)\quad \text{for } \tau\le 1.
	\end{equation*}
	In order to ensure \eqref{2.1-2} does not happen, we select $\lambda$ to satisfy
	\begin{equation*}
		C\lambda^sg\left( \frac{\omega}{R^s}\right) \le g\left( \frac{\tau\omega}{(\lambda R)^s}\right)\Rightarrow g^{-1}\left( C\lambda^sg\left( \frac{\omega}{R^s}\right) \right)\le \frac{\tau\omega}{(\lambda R)^s},
	\end{equation*}
	namely, by letting $C\lambda^s\le 1$,
	\begin{equation}
		\label{2.1-2-2}
		\frac{\tau\omega}{(\lambda R)^s}\ge \left( \frac{q}{pC\lambda^s}\right)^{-\frac{1}{p-1}}\frac{\omega}{R^s}\Rightarrow \lambda\le \left( \frac{\tau}{C}\right)^{\frac{p-1}{sp}},
	\end{equation}
	where the constant $C>4$ depends on $n,p,q,s$ and we utilized
	\begin{equation*}
		g^{-1}(at)\ge \left( \frac{q}{ap}\right)^{-\frac{1}{p-1}}g^{-1}(t)\quad \text{for }0<a\le 1.
	\end{equation*}
	At this point, we have obtained the formal first step for the inductive argument, that is,
	\begin{equation*}
		\osc\limits_{B_{r_1}}u\le \omega_1:=(1-\tau)\omega,
	\end{equation*}
	with $r_1=\lambda R$ for some $\lambda\in \left( 0,(\tau/C)^{\frac{p-1}{sp}}\right] $. It is worth pointing out that we shall determine the more accurate range of $\lambda$ to meet the demands of iteration process.

	\subsection{The recursive process.}
	Suppose we have confirmed
	\begin{equation}
		\label{2.1-3}
		\osc \limits_{B_i}u\le \omega_i:=(1-\tau)^i\omega_0\quad \text{for }i=0,1,\ldots,j,
	\end{equation}
	where $\omega_0=\omega,B_i=B_{r_i},r_i=\lambda^iR$. Let also $\nu^+_i=\sup\limits_{B_i}u$ and $\nu^-_i=\inf\limits_{B_i}u$. We are ready to show the preceding oscillation estimate still holds for $i=j+1$. The procedure is analogous to Step 1. Without loss of generality, we may suppose $\nu^+_j-\nu^-_j>\frac{1}{2}\omega_j$. Then one of the following two alternatives
	\begin{equation*}
		\left\lbrace
		\begin{aligned}
			\left| \left\lbrace u-\nu^-_j>\frac{1}{4}\omega_j\right\rbrace \cap B_{\lambda r_j}(\xi_0)\right|\ge \frac{1}{2}\left| B_{\lambda r_j}\right|  \\
			\left| \left\lbrace \nu^+_j-u>\frac{1}{4}\omega_j\right\rbrace \cap B_{\lambda r_j}(\xi_0)\right|\ge \frac{1}{2}\left| B_{\lambda r_j}\right|
		\end{aligned}
		\right.
	\end{equation*}
	must be true. Let us assume the first case holds. Exploiting Corollary \ref{F} with $\beta=\frac{1}{2},\gamma=\frac{1}{4}$ and $r=\lambda r_j$, we obtain that either
	\begin{equation}
		\label{2.1-4}
		(\lambda r_j)^s\mathrm{Tail}((u-\nu^-_j)_-;\xi_0,r_j)>g\left( \frac{\tau\omega_j}{(\lambda r_j)^s}\right)
	\end{equation}
	or
	\begin{equation*}
		u-\nu^-_j\ge \tau\omega_j\quad \text{in } B_{j+1}(\xi_0)=B_{\lambda r_j}(\xi_0)
	\end{equation*}
	with the small constant $\tau\in \left(0,\frac{1}{2}\right) $ depending only on $n,p,q,s$. This along with \eqref{2.1-3} implies
	\begin{equation*}
		\osc\limits_{B_{j+1}}u=\sup\limits_{B_{j+1}}(u-\nu^-_j)-\inf\limits_{B_{j+1}}(u-\nu^-_j)\le (1-\tau)\omega_j=\omega_{j+1}.
	\end{equation*}\par
	At this moment, our goal is to enforce that \eqref{2.1-4} can not occur through choosing properly $\lambda\in \left( 0,\frac{1}{4}\right) $. To this end, we evaluate
	\begin{equation*}
		\begin{aligned}
			&\ \quad\mathrm{Tail}((u-\nu^-_j);\xi_0,r_j)\\
			&=\int_{\mathbb{H}^n\setminus B_R}g\left( \frac{(u-\nu^-_j)_-}{|\xi_0^{-1}\circ\xi|^{s}_{\mathbb{H}^n}}\right)\frac{d\xi}{|\xi_0^{-1}\circ\xi|^{Q+s}_{\mathbb{H}^n}}
			+\sum_{i=0}^{j-1}\int_{B_i\setminus B_{i+1}}g\left( \frac{(u-\nu^-_j)_-}{|\xi_0^{-1}\circ\xi|^{s}_{\mathbb{H}^n}}\right)\frac{d\xi}{|\xi_0^{-1}\circ\xi|^{Q+s}_{\mathbb{H}^n}}\\
			&=:I_1+I_2.
		\end{aligned}
	\end{equation*}
	For $I_1$, it follows from the definition of $\omega$ that
	\begin{equation*}
		I_1\le C\int_{\mathbb{H}^n\setminus B_R}\left[
		g\left( \frac{u_-}{|\xi_0^{-1}\circ\xi|^{s}_{\mathbb{H}^n}}\right)
		+g\left( \frac{|\nu^-_j|}{|\xi_0^{-1}\circ\xi|^{s}_{\mathbb{H}^n}}\right)
		\right] \frac{d\xi}{|\xi_0^{-1}\circ\xi|^{Q+s}_{\mathbb{H}^n}}\le \frac{C}{R^s}g\left( \frac{\omega}{R^s}\right),
	\end{equation*}
	where we note $|\nu^-_j|\le \omega$. Next let us pay attention to the integral $I_2$. For $\xi\in B_i$ with $i=0,1,\ldots,j-1$, via the inclusion relation of $B_i$,
	\begin{equation*}
		\left( u(\xi)-\nu^-_j\right)_-\le \nu^-_j-\nu^-_i\le \nu^+_j-\nu^-_i\le \nu^+_i-\nu^-_i\le \omega_i.
	\end{equation*}
	Hence,
	\begin{equation*}
		\begin{aligned}
			I_2&\le \sum_{i=0}^{j-1}\int_{B_i\setminus B_{i+1}}g\left( \frac{\omega_i}{|\xi_0^{-1}\circ\xi|^{s}_{\mathbb{H}^n}}\right) \frac{d\xi}{|\xi_0^{-1}\circ\xi|^{Q+s}_{\mathbb{H}^n}}\\
			&=\sum_{i=0}^{j-1}\int_{B_i\setminus B_{i+1}}g\left( \frac{\omega_ir^s_{i+1}}{r^s_{i+1}|\xi_0^{-1}\circ\xi|^{s}_{\mathbb{H}^n}}\right) \frac{d\xi}{|\xi_0^{-1}\circ\xi|^{Q+s}_{\mathbb{H}^n}}\\
			&\le C\sum_{i=0}^{j-1}g\left( \frac{\omega_i}{r^s_{i+1}}\right) \int_{B_i\setminus B_{i+1}}\frac{r^{s(p-1)}_{i+1}}{|\xi_0^{-1}\circ\xi|^{Q+sp}_{\mathbb{H}^n}}d\xi\\
			&\le C\sum_{i=0}^{j-1}\frac{1}{r^s_{i+1}}g\left( \frac{\omega_i}{r^s_{i+1}}\right).
		\end{aligned}
	\end{equation*}
	Finally, we arrive at
	\begin{equation*}
		r^s_j\mathrm{Tail}((u-\nu^-_j)_-;\xi_0,r_j)\le C\left( \frac{r_j}{R}\right)^sg\left( \frac{\omega}{R^s}\right)+C\sum_{i=0}^{j-1}\left( \frac{r_j}{r_{i+1}}\right)^sg\left( \frac{\omega_i}{r^s_{i+1}}\right).
	\end{equation*}
	Notice that, from the definitions of $r_i,\omega_i$,
	\begin{equation*}
		\begin{aligned}
			\left( \frac{r_j}{r_{i+1}}\right)^sg\left( \frac{\omega_i}{r^s_{i+1}}\right)
			&=\lambda^{s(j-i-1)}g\left( \frac{\omega_j}{r^s_{j+1}}\frac{r^s_{j+1}}{r^s_{i+1}}\frac{\omega_i}{\omega_j}\right)\\
			&=\lambda^{s(j-i-1)}g\left( \frac{\omega_j}{r^s_{j+1}}\lambda^{s(j-1)}(1-\tau)^{i-j} \right)\\
			&\le C(1-\tau)^{(q-1)(i-j)}\lambda^{s(j-i-1)+s(p-1)(j-1)}g\left( \frac{\omega_j}{r^s_{j+1}}\right)\\
			&=C\lambda^{-s}(1-\tau)^{(q-1)(i-j)}\lambda^{sp(j-i)}g\left( \frac{\omega_j}{r^s_{j+1}}\right)
		\end{aligned}
	\end{equation*}
	and, similarly,
	\begin{equation*}
		\left( \frac{r_j}{R}\right)^sg\left( \frac{\omega}{R^s}\right)\le C(1-\tau)^{-(q-1)j}\lambda^{sp-s+spj}g\left( \frac{\omega_j}{r^s_{j+1}}\right).
	\end{equation*}
	Then,
	\begin{equation*}
		\begin{aligned}
			r^s_j\mathrm{Tail}\left( (u-\nu^-_j)_-;\xi_0,r_j\right)&\le C\lambda^{-s}\sum_{i=0}^{j-1}\lambda^{sp(j-i)}(1-\tau)^{(1-q)(j-i)}g\left( \frac{\omega_j}{r^s_{j+1}}\right)\\
			&\le C(1-\tau)^{1-q}\lambda^{sp-s} g\left( \frac{\omega_j}{r^s_{j+1}}\right),
		\end{aligned}
	\end{equation*}
	where we can find
	\begin{equation*}
		\sum_{i=0}^{j-1}\left[ \lambda^{sp}(1-\tau)^{1-q}\right]^{j-i}=\lambda^{sp}(1-\tau)^{1-q}\frac{\left[ \lambda^{sp}(1-\tau)^{1-q}\right]^{j}-1}{\lambda^{sp}(1-\tau)^{1-q}-1}\le 2\lambda^{sp}(1-\tau)^{1-q},
	\end{equation*}
	if $\lambda^{sp}(1-\tau)^{1-q}$ is restricted by $\frac{1}{2}$, i.e.,
	\begin{equation}
		\label{2.1-5}
		\lambda\le 2^{-\frac{1}{sp}}(1-\tau)^{\frac{q-1}{sp}}.
	\end{equation}
	To contradict \eqref{2.1-4}, we need
	\begin{equation*}
		C(1-\tau)^{1-q}\lambda^{sp}g\left( \frac{\omega_j}{r^s_{j+1}}\right)\le  g\left( \frac{\tau\omega_j}{r^s_{j+1}}\right),
	\end{equation*}
	namely, by letting $C(1-\tau)^{1-q}\lambda^{sp}<1$,
	\begin{equation}
		\label{2.1-6}
		\frac{\tau\omega_j}{r^s_{j+1}}\ge \left( \frac{q}{pC(1-\tau)^{1-q}\lambda^{sp}}\right)^{-\frac{1}{p-1}}\frac{\omega_j}{r^s_{j+1}}
		\Rightarrow\lambda\le \tau^{\frac{p-1}{sp}}\left( \frac{q}{pC(1-\tau)^{1-q}}\right)^{\frac{1}{sp}}.
	\end{equation}
	Rechecking the inference process above, especially \eqref{2.1-2-2}, \eqref{2.1-5} and \eqref{2.1-6}, we select eventually $\lambda$ as
	\begin{equation}
		\label{2.1-7}
		\lambda=\frac{1}{2}\min\left\lbrace
		\left( \frac{\tau}{C}\right)^{\frac{p-1}{sp}},
		2^{-\frac{1}{sp}}(1-\tau)^{\frac{q-1}{sp}},
		\tau^{\frac{p-1}{sp}}\left( \frac{(1-\tau)^{q-1}}{C}\right)^{\frac{1}{sp}}
		\right\rbrace .
	\end{equation}
	As a mater of fact, $\lambda$ is a number in $\left( 0,\frac{1}{4}\right) $ depending only on $n,p,q,s$. As a consequence, we do infer
	\begin{equation*}
		\osc\limits_{B_{j+1}}u=\osc\limits_{B_{\lambda^{j+1}R}}u\le \omega_{j+1}
	\end{equation*}
	with $\lambda$ determined by \eqref{2.1-7}, which completes the induction and implies the H$\ddot{\text{o}}$lder continuity of $u$. Indeed, let us fix $0<r\le R$, and then there is a nonnegative integer $j$ such that
	\begin{equation*}
		\lambda^{j+1}R\le r\le \lambda^jR,
	\end{equation*}
	which results in
	\begin{equation*}
		j+1\le \text{ln}\left( \frac{r}{R}\right)^{\frac{1}{\text{ln}\lambda}}.
	\end{equation*}
	Hence,
	\begin{equation*}
		\omega_j=(1-\tau)^j\omega\le (1-\tau)^{-1}(1-\tau)^{\frac{1}{\text{ln}\lambda}\text{ln}\left( \frac{r}{R}\right) }\omega=\frac{\omega}{1-\tau}\left( \frac{r}{R}\right)^\alpha
	\end{equation*}
	with
	\begin{equation*}
		\alpha=\frac{\text{ln}(1-\tau)}{\text{ln}\lambda}.
	\end{equation*}
	In addition, we have $B_r(\xi_0)\subset B_{\lambda^jR}=B_j(\xi_0)$. Therefore, we arrive at
	\begin{equation*}
		\osc\limits_{B_r}u\le \osc\limits_{B_{j}}\le \omega_j\le \frac{\omega}{1-\tau}\left( \frac{r}{R}\right)^\alpha.
	\end{equation*}
	Now we have finished the proof of Theorem \ref{thm2}.

	\section{Harnack inequalities}
    \label{sec-4}
		The goal of this section is dedicated to establishing a full Harnack-type inequality for the weak solutions of Eq. \eqref{main}. We first need to have the forthcoming result, a variant of Corollary \ref{F}, which is deduced through following Lemma \ref{D}, Lemma \ref{E} as well as Corollary \ref{F} and letting $0=\nu^-=\inf_{B_{4R}}u$ along with $\gamma\omega:=t$ correspondingly.

		\begin{lemma}
			\label{G}
			Let $B_{4R}(\xi_0)\subset\subset\Omega$ and $u\in H\mathbb W^{s,G}(\Omega)\cap L^g_s(\mathbb{H}^n)$ be a weak solution of Eq. \eqref{main}. If
			\begin{equation*}
				u\ge 0\quad \text{in }B_{4R}(\xi_0)
			\end{equation*}
			and
			\begin{equation*}
				\left|
				B_{R}(\xi_0)\cap \left\lbrace u\ge t\right\rbrace
				\right| \ge \beta |B_R|
			\end{equation*}
			for $t>0$ and $\beta\in (0,1)$, then we can find a constant $\delta\in (0,1/8]$, which depends only on $n,p,q,s$ and $\beta$, 
satisfying that either
			\begin{equation*}
				R^s\mathrm{Tail}(u_-;\xi_0,4R)>g\left( \frac{\delta t}{R^s}\right)
			\end{equation*}
			or
			\begin{equation*}
				u\ge \delta t\quad \text{in }B_R(\xi_0).
			\end{equation*}
		\end{lemma}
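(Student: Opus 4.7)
The plan is to reduce Lemma \ref{G} directly to Corollary \ref{F} (and hence, transitively, to Lemmas \ref{D} and \ref{E}) by making the specific choices of the abstract parameters flagged in the sentence preceding the statement. In the framework of Section \ref{sec-3}, I would identify the outer reference ball with $B_{4R}(\xi_0)$, set $\nu^- := 0$ and $\nu^+ := \sup_{B_{4R}} u$, and then take $\omega := \max\{\nu^+, t\}$ together with $\gamma := t/\omega \in (0,1]$, so that $\gamma\omega = t$ by construction. The nonnegativity hypothesis $u \ge 0$ in $B_{4R}(\xi_0)$ makes the choice $\nu^- = 0 \le \inf_{B_{4R}} u$ admissible, and the structural constraint $\omega \ge \nu^+ - \nu^-$ is built into the definition of $\omega$.

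Next, with the inner radius $r := R$, the inclusion $B_{2r}(\xi_0) \subset B_{4R}(\xi_0)$ is automatic, and the measure hypothesis of Lemma \ref{G} becomes exactly \eqref{6.1} in the lower-sign (supersolution) case:
$$
\bigl|B_R(\xi_0) \cap \{-(\nu^- - u) \ge \gamma\omega\}\bigr| \;=\; \bigl|B_R(\xi_0) \cap \{u \ge t\}\bigr| \;\ge\; \beta\,|B_R|.
$$
Corollary \ref{F} then supplies a constant $\tau \in (0, 1/2]$ depending only on $n, p, q, s, \beta$ such that either
$$
R^s\,\mathrm{Tail}\bigl((u - \nu^-)_-;\,\xi_0, 4R\bigr) \;>\; g\!\left(\frac{\tau\gamma\omega}{R^s}\right) \qquad \text{or} \qquad -(\nu^- - u) \;\ge\; \tau\gamma\omega \quad \text{in } B_R(\xi_0).
$$
Substituting $\nu^- = 0$ and $\gamma\omega = t$ and noting $(u - 0)_- = u_-$ pointwise, these two alternatives translate verbatim into the two alternatives of Lemma \ref{G} with $\tau$ playing the role of $\delta$.

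Finally, to meet the cosmetic requirement $\delta \in (0, 1/8]$ of the statement, I would redefine $\delta := \min\{\tau, 1/8\}$; this is harmless because both alternatives are monotone in the constant, in the sense that a smaller $\delta$ produces a weaker tail condition and a weaker pointwise lower bound, so whichever alternative held for $\tau$ still holds for $\delta$. I do not anticipate a genuine obstacle: the whole proof is essentially a verification that the specific choice $\nu^- = 0$, $\gamma\omega = t$ is compatible with the structural hypotheses of Section \ref{sec-3}, and that the conclusion of Corollary \ref{F} rewrites cleanly into the claimed form after unwinding the notation.
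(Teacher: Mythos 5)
Your reduction to Corollary \ref{F} with the choices $\nu^- = 0$, $\gamma\omega = t$, $r = R$ and outer reference ball $B_{4R}(\xi_0)$ is exactly the route the paper sketches in the sentence preceding Lemma \ref{G}, and your monotonicity argument for passing from $\tau\in(0,1/2]$ to $\delta = \min\{\tau, 1/8\}$ is correct. The one small gap is that Corollary \ref{F} demands $\gamma\in(0,1)$ strictly, whereas your $\gamma = t/\max\{\nu^+,t\}$ can equal $1$ (the measure hypothesis forces $t\le\operatorname*{ess\,sup}_{B_{4R}}u=\nu^+$, with equality possible); choosing instead $\omega := 2\max\{\nu^+,t\}$ still satisfies $\omega\ge\nu^+-\nu^-$ and gives $\gamma = t/\omega\le 1/2<1$, which closes this gap at no cost.
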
\par
		With the help of Lemma \ref{G} and a Krylov-Safonov covering lemma \cite[Lemma 7.2]{KS01}, we now could refine Lemma \ref{G} as follows:
		\begin{lemma}
			\label{H}
			Let $\beta\in (0,1),t>0,k\in \mathbb{N}$ and $B_{16R}(\xi_0)\subset\subset\Omega$. Assume $u\in H\mathbb W^{s,G}(\Omega)\cap L^g_s(\mathbb{H}^n)$ is a weak supersolution of Eq. \eqref{main}. When
			\begin{equation*}
				u\ge 0\quad \text{in }B_{16R}(\xi_0)
			\end{equation*}
			and
			\begin{equation*}
				\left|
				B_{R}(\xi_0)\cap \left\lbrace u\ge t\right\rbrace
				\right| \ge \beta^k |B_R|,
			\end{equation*}
			we can find a constant $\delta\in \left( 0,\frac{1}{8}\right] $, depending only upon $n,p,q,s$ and $\beta$, such that either
			\begin{equation}
				\label{3.2-1}
				R^s\mathrm{Tail}(u_-;\xi_0,16R)>g\left( \frac{\delta^k t}{R^s}\right)
			\end{equation}
			or
			\begin{equation*}
				u\ge \delta^k t\quad \text{in }B_R(\xi_0).
			\end{equation*}
		\end{lemma}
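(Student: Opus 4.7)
The argument proceeds by induction on $k\in\mathbb N$. The base case $k=1$ is essentially Lemma \ref{G}: the hypothesis $u\ge 0$ in $B_{16R}(\xi_0)$ implies $u\ge 0$ in $B_{4R}(\xi_0)$, and the tail $\mathrm{Tail}(u_-;\xi_0,16R)$ controls $\mathrm{Tail}(u_-;\xi_0,4R)$ up to a universal multiplicative constant (since $u_-\equiv 0$ on $B_{16R}\setminus B_{4R}$), which can be absorbed into $\delta$ after an initial shrinking.

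For the inductive step, assume the conclusion holds for $k-1$ and suppose \eqref{3.2-1} fails, i.e.\ $R^s\mathrm{Tail}(u_-;\xi_0,16R)\le g(\delta^k t/R^s)$. Introduce the superlevel sets
\begin{equation*}
E_j:=B_R(\xi_0)\cap\{u\ge\delta^j t\},\qquad j=0,1,\dots,k,
\end{equation*}
so that $E_0\subset E_1\subset\cdots\subset E_k$ and $|E_0|\ge\beta^k|B_R|$ by hypothesis. The aim is to show $|E_k|=|B_R|$, which yields $u\ge\delta^k t$ a.e.\ in $B_R(\xi_0)$.

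To pass from $E_j$ to $E_{j+1}$, I invoke the Krylov--Safonov covering lemma \cite[Lemma 7.2]{KS01}: whenever $|E_j|<|B_R|$, there is an expansion $[E_j]\subset B_R(\xi_0)$, built from Heisenberg balls $B_\rho(z)\subset B_R(\xi_0)$ of density $|E_j\cap B_\rho(z)|>\beta|B_\rho(z)|$, such that $|[E_j]|\ge|E_j|/\beta$. On each such ball I apply Lemma \ref{G} with $R$ replaced by $\rho$, $\xi_0$ by $z$ and $t$ by $\delta^j t$ (which is legitimate because $B_{4\rho}(z)\subset B_{5R}(\xi_0)\subset B_{16R}(\xi_0)$, so $u\ge 0$ on the relevant enlargement): either
\begin{equation*}
\rho^s\mathrm{Tail}(u_-;z,4\rho)>g\!\left(\tfrac{\delta^{j+1}t}{\rho^s}\right),
\end{equation*}
or $u\ge\delta^{j+1}t$ on $B_\rho(z)$. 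The first alternative will be ruled out by the standing global tail bound: since $u\ge 0$ on $B_{16R}(\xi_0)$, the integrand $u_-$ vanishes on $B_{16R}\setminus B_{4\rho}(z)$, and for $\xi\notin B_{16R}(\xi_0)$ the estimate $|z^{-1}\circ\xi|_{\mathbb{H}^n}\ge\tfrac{15}{16}|\xi_0^{-1}\circ\xi|_{\mathbb{H}^n}$ (a direct quasi-triangle calculation valid for all $z\in B_R(\xi_0)$) together with the growth \eqref{1-1} of $g$ produces
\begin{equation*}
\rho^s\mathrm{Tail}(u_-;z,4\rho)\le C(\rho/R)^s\,R^s\mathrm{Tail}(u_-;\xi_0,16R)\le C(\rho/R)^s g\!\left(\tfrac{\delta^k t}{R^s}\right)\le g\!\left(\tfrac{\delta^{j+1}t}{\rho^s}\right),
\end{equation*}
the last step using $(R/\rho)^{s(p-1)}\delta^{(j+1-k)(p-1)}\ge 1$ together with an initial choice of $\delta$ small enough to absorb the geometric constant $C$ (this is exactly where the buffer $16R=4\cdot 4R$ is used, compared to Lemma \ref{G}). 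Hence $u\ge\delta^{j+1}t$ throughout each ball of the covering, so $E_{j+1}\supset[E_j]$ and $|E_{j+1}|\ge|E_j|/\beta$. Iterating $k$ times from $|E_0|\ge\beta^k|B_R|$ forces $|E_k|=|B_R|$, completing the induction.

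The main obstacle I anticipate is the tail comparison described above: a \emph{single} global tail bound at scale $16R$ around $\xi_0$ must dominate \emph{every} local tail alternative produced at arbitrary scales $\rho\le R$ and centers $z\in B_R(\xi_0)$ across all $k$ iterations; this forces both the separation of the radii $4R$ and $16R$ and a careful interplay between the growth constant from \eqref{02} and the contractive factor $\delta$, but once this comparison is settled the iteration itself is routine.
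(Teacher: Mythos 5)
Your overall strategy — level sets $E_j$, the Krylov--Safonov covering lemma, and the single global tail bound to kill the local tail alternatives at every scale — is the one used by Cozzi and is exactly what the paper intends. However, there is a genuine gap in the constant bookkeeping at the crucial last step $j+1=k$. You define $E_j:=B_R\cap\{u\ge\delta^jt\}$ with the \emph{same} $\delta$ that appears in the final statement, and you claim that the inequality
\begin{equation*}
C(\rho/R)^s\,g\!\left(\tfrac{\delta^k t}{R^s}\right)\le g\!\left(\tfrac{\delta^{j+1}t}{\rho^s}\right)
\end{equation*}
can be forced by the factor $(R/\rho)^{s(p-1)}\delta^{(j+1-k)(p-1)}\ge 1$ and by an initial choice of $\delta$ small. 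Unwinding the growth bounds $g(at)\ge\tfrac{p}{q}a^{p-1}g(t)$ for $a\ge1$, the inequality you need is $Cq/p\le\delta^{(j+1-k)(p-1)}(R/\rho)^{sp}$. When $j+1=k$ the $\delta$-factor equals $1$, and $(R/\rho)^{sp}$ can be as close to a modest constant as the covering lemma allows; shrinking $\delta$ does nothing here, and the buffer $16R$ against $4R$ (which you cite as the mechanism for absorption) only serves to make the gauges $|z^{-1}\circ\xi|_{\mathbb H^n}$ and $|\xi_0^{-1}\circ\xi|_{\mathbb H^n}$ comparable and to keep the relevant dilated balls inside $B_{16R}$; it cannot kill a multiplicative constant $C>1$.

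The standard repair, implicit in \cite[Lemma 6.7]{Cozzi} and what the paper is pointing to, is to run the iteration with the constant $\delta_0$ coming from Lemma~\ref{G}, i.e.\ set $E_j:=B_R\cap\{u\ge\delta_0^jt\}$, and to prove $u\ge\delta_0^kt$ in $B_R$; the $\delta$ in the conclusion of Lemma~\ref{H} is then chosen \emph{strictly smaller} than $\delta_0$ by a universal factor. With the global tail assumption written in terms of this smaller $\delta$, the ratio $g(\delta_0^{j+1}t/\rho^s)\big/g(\delta^kt/R^s)$ picks up a factor $\ge\frac{p}{q}(\delta_0/\delta)^{k(p-1)}$, which is increasing in $k$; the binding case $k=1$ dictates $\delta\le\delta_0(p/(Cq))^{1/(p-1)}$ (adjusted for the geometric constants from the covering and the gauge comparison), and then the constant $C$ is absorbed uniformly in $j$ and $k$. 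Once you keep $\delta_0$ and $\delta$ separate, the rest of your computation — the gauge comparison $|z^{-1}\circ\xi|_{\mathbb H^n}\ge\tfrac{15}{16}|\xi_0^{-1}\circ\xi|_{\mathbb H^n}$ for $z\in B_R$ and $\xi\notin B_{16R}$, the inclusion of $B_{4\rho}(z)$ in $B_{16R}(\xi_0)$, and the growth/expansion of the $E_j$ via the covering lemma — is correct. (A minor further remark: what you describe is really a direct iteration of the covering lemma, not an induction on $k$; your ``inductive hypothesis'' for $k-1$ is never invoked. Also, since $u_-\equiv 0$ on $B_{16R}$ the two tails $\mathrm{Tail}(u_-;\xi_0,16R)$ and $\mathrm{Tail}(u_-;\xi_0,4R)$ are in fact \emph{equal}, not merely comparable.)
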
\par
		The proof of this lemma is almost identical to that of \cite[Lemma 6.7]{Cozzi}, because in this process the structure of  Eq. \eqref{main} is not utilized except addressing the nonlocal tail. Corresponding to the estimate on $\mathrm{Tail}(u_-;x_0,12r)$ in \cite[Lemma 6.7]{Cozzi}, we here need to evaluate, for $\eta\in B_R(\xi_0),r\in \frac{R}{3}$ and $u\ge 0$ in $B_{12r}(\eta)$,
		\begin{equation*}
			\begin{aligned}
				(3r)^s\mathrm{Tail}(u_-;\eta,12r)&=(3r)^s\int_{\mathbb{H}^n\setminus B_{12r}(\eta)}g\left( \frac{u_-}{|\eta^{-1}\circ \xi|^s_{\mathbb{H}^n}}\right)\frac{d\xi}{|\eta^{-1}\circ \xi|^{Q+s}_{\mathbb{H}^n}}\\
				&=(3r)^s\int_{\mathbb{H}^n\setminus B_{16R}(\xi_0)}g\left( \frac{u_-}{|\eta^{-1}\circ \xi|^s_{\mathbb{H}^n}}\right)\frac{d\xi}{|\eta^{-1}\circ \xi|^{Q+s}_{\mathbb{H}^n}}\\
				&\le C(Q,p,q,s)R^s\int_{\mathbb{H}^n\setminus B_{16R}(\xi_0)}g\left( \frac{u_-}{|\xi_0^{-1}\circ \xi|^s_{\mathbb{H}^n}}\right)\frac{d\xi}{|\xi_0^{-1}\circ \xi|^{Q+s}_{\mathbb{H}^n}}\\
				&\le Cg\left( \frac{\delta^k t}{R}\right)\le g\left( \frac{\delta^k t}{3r}\right)
			\end{aligned}
		\end{equation*}
		with the use of the converse of \eqref{3.2-1}, which guarantees the reasonable application of Lemma \ref{G} in $B_{12r}(\eta)$.\par
		Based on Lemma \ref{H}, we can conclude the following weak Harnack inequality in a rather straightforward way.\par
		\begin{lemma}
			\label{I}
			Let $B_{16R}:=B_{16R}(\xi_0)\subset\Omega$ and $u\in H\mathbb W^{s,G}(\Omega)\cap L^g_s(\mathbb{H}^n)$ be a weak solution to Eq. \eqref{main}. If
			\begin{equation}
				\label{3.3-1}
				u\ge 0\quad \text{in }B_{16R},
			\end{equation}
			then there are two constants $\epsilon_0\in (0,1)$ and $C\ge 1$, both of which depend  only on $n,p,q,s$, such that
			\begin{equation*}
				\left(
				\mint_{B_R}u^{\epsilon_0}\,d\xi
				\right) ^{\frac{1}{\epsilon_0}}
				\le C\left(
				\inf_{B_R}u+R^sg^{-1}\left( R^s\mathrm{Tail}(u_-;\xi_0,R)\right)
				\right) .
			\end{equation*}
		\end{lemma}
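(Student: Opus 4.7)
The plan is to deduce the $L^{\epsilon_0}$ bound from a distribution-function decay estimate on the super-level sets $\{u\ge t\}\cap B_R$, produced by reading Lemma \ref{H} contrapositively. Set
\[
M:=R^sg^{-1}\!\left(R^s\mathrm{Tail}(u_-;\xi_0,R)\right),\qquad m:=\inf_{B_R}u+M.
\]
Since $u\ge 0$ on $B_{16R}$, the function $u_-$ vanishes on $B_{16R}\setminus B_R$, whence $\mathrm{Tail}(u_-;\xi_0,R)=\mathrm{Tail}(u_-;\xi_0,16R)$, and the tail in Lemma \ref{H} coincides with the one in the statement. The target is to prove
\begin{equation}\label{plan1}
|B_R\cap\{u\ge t\}|\le C\left(\frac{m}{t}\right)^{\epsilon_1}|B_R|\quad\text{for all }t>0,
\end{equation}
for some $\epsilon_1,C>0$ depending only on $n,p,q,s$, and then integrate in $t$.

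For \eqref{plan1} I would fix $\beta=\tfrac{1}{2}$ in Lemma \ref{H} and let $\delta=\delta(1/2)\in(0,1/8]$ be the corresponding constant. The contrapositive of Lemma \ref{H} reads: whenever $k\in\mathbb N$ satisfies both $\delta^kt>\inf_{B_R}u$ and $g(\delta^kt/R^s)>R^s\mathrm{Tail}(u_-;\xi_0,16R)$ (the latter equivalent to $\delta^kt>M$ by monotonicity of $g^{-1}$), one must have $|B_R\cap\{u\ge t\}|<2^{-k}|B_R|$. Both conditions follow at once from the single inequality $\delta^kt>m$. Hence for $t>m/\delta$ I would choose $k=\lfloor\log_{1/\delta}(t/m)\rfloor\ge 1$, which makes $\delta^kt>m$, and obtain
\[
|B_R\cap\{u\ge t\}|<2^{-k}|B_R|\le 2\,(m/t)^{\log 2/\log(1/\delta)}|B_R|,
\]
i.e.\ \eqref{plan1} with $\epsilon_1:=\log 2/\log(1/\delta)$; for $t\le m/\delta$ estimate \eqref{plan1} follows from the trivial bound $|B_R\cap\{u\ge t\}|\le|B_R|$ after absorbing $\delta^{-\epsilon_1}$ into the constant.

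Finally I would integrate \eqref{plan1}. Picking any $\epsilon_0\in(0,\epsilon_1)$, say $\epsilon_0=\epsilon_1/2$, the layer-cake identity gives
\begin{align*}
\int_{B_R}u^{\epsilon_0}\,d\xi
&=\epsilon_0\int_0^\infty t^{\epsilon_0-1}|B_R\cap\{u>t\}|\,dt\\
&\le|B_R|\,m^{\epsilon_0}+C\epsilon_0|B_R|\,m^{\epsilon_1}\int_m^\infty t^{\epsilon_0-\epsilon_1-1}\,dt\\
&\le C(n,p,q,s)\,|B_R|\,m^{\epsilon_0},
\end{align*}
and taking the $(1/\epsilon_0)$-th power after dividing by $|B_R|$ yields the desired inequality. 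The main obstacle I anticipate is the bookkeeping in the contrapositive step: the two distinct alternatives in Lemma \ref{H} (a large negative tail on one hand, a uniform pointwise lower bound on the other) must be fused into the single scale $m$, which crucially relies on the monotonicity of $g$ and $g^{-1}$ together with the identification $\mathrm{Tail}(u_-;\xi_0,R)=\mathrm{Tail}(u_-;\xi_0,16R)$ granted by the sign assumption on $u$. Once that is handled, the rest is a standard truncation-and-integration argument.
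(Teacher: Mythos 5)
Your proposal is correct and follows essentially the same route as the paper's own proof: both fix $\beta=\tfrac12$ in Lemma \ref{H}, read the resulting dichotomy to obtain a power-law decay of the super-level sets $|B_R\cap\{u\ge t\}|$ in terms of the quantity $m=\inf_{B_R}u+R^sg^{-1}(R^s\mathrm{Tail}(u_-;\xi_0,16R))$, choose the integer $k$ roughly as $\log_{1/2}(|A^+(t,R)|/|B_R|)$ (equivalently your $\lfloor\log_{1/\delta}(t/m)\rfloor$), and then integrate via Cavalieri/layer-cake. The paper even fixes $\epsilon_0=\tfrac{1}{2\log_{1/2}\delta}$, which is exactly your $\epsilon_1/2$. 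Your write-up is marginally cleaner in two respects: you observe that $\mathrm{Tail}(u_-;\xi_0,R)=\mathrm{Tail}(u_-;\xi_0,16R)$ holds as an identity (not merely an inequality) because $u_-\equiv 0$ on $B_{16R}\setminus B_R$, and you keep the decay exponent $\epsilon_1=2\epsilon_0$ distinct from the integration exponent $\epsilon_0$, making the convergence of $\int_m^\infty t^{\epsilon_0-\epsilon_1-1}\,dt$ explicit; the paper instead states the decay with exponent $\epsilon_0$ and then defers the integration to an external reference, which as displayed would lead to a logarithmically divergent integral unless the sharper exponent is reinstated in the cited computation. So your bookkeeping of the exponents is the more careful version of the same argument.
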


		\begin{proof}
			Assume $u$ does not vanish identically in $B_R$. Let $\delta\in\left( 0,\frac{1}{8}\right] $ comes from Lemma \ref{H} with $\beta=\frac{1}{2}$. Define
			\begin{equation*}
				\epsilon_0:=\frac{1}{2\log_\frac{1}{2}\delta}\in (0,1).
			\end{equation*}
			We assert that, for any $t\ge 0$,
			\begin{equation}
				\label{3.3-2}
				\inf_{B_R}u+R^sg^{-1}\left( R^s\mathrm{Tail}(u_-;\xi_0,16R)\right)\ge \delta\left( \frac{|A^+(t,R)|}{|B_R|}\right)^{\frac{1}{\epsilon_0}}t.
			\end{equation}
			Via the definition of $A^+(t,R)$, we just need to verify \eqref{3.3-2} for $t\in \left[ 0,\sup_{B_R}u\right) $. Here let $\sup_{B_R}u=+\infty$ if $u$ is not bounded from above in $B_R$.\par
			Fixing arbitrarily $t\in \left[ 0,\sup\limits_{B_R}u\right) $, we can find $k=k(t)$ as the smallest integer fulfilling
			\begin{equation}
				\label{3.3-3}
				\left| A^+(t,R)\right|\ge 2^{-k}|B_R|,
			\end{equation}
			i.e., $k$ is an integer such that
			\begin{equation*}
				\log_\frac{1}{2}\frac{\left| A^+(t,R)\right|}{|B_R|}\le k<1+\log_\frac{1}{2}\frac{\left| A^+(t,R)\right|}{|B_R|}.
			\end{equation*}
			Then we get
			\begin{equation}
				\label{3.3-4}
				\delta\left( \frac{\left| A^+(t,R)\right|}{|B_R|}\right)^\frac{1}{\epsilon_0}\le \delta^k.
			\end{equation}
			If $R^sg^{-1}\left( R^s\mathrm{Tail}(u_-;\xi_0,16R)\right)>\delta^kt$, then nothing is proved for \eqref{3.3-2}. Let us consider the case $R^sg^{-1}\left( R^s\mathrm{Tail}(u_-;\xi_0,16R)\right)\le\delta^kt$, that is,
			\begin{equation*}
			 R^s\mathrm{Tail}(u_-;\xi_0,16R)\le g\left( \frac{\delta^kt}{R^s}\right) .
			\end{equation*}
			At this point, this inequality and \eqref{3.3-3} justify the application of Lemma \ref{H}, and we derive
			\begin{equation*}
				u\ge \delta^kt\quad \text{in }B_R.
			\end{equation*}\par
			From mentioned above, there holds
			\begin{equation*}
				\inf_{B_R}u+R^sg^{-1}\left( R^s\mathrm{Tail}(u_-;\xi_0,16R)\right)\ge \delta^kt,
			\end{equation*}
			and by invoking \eqref{3.3-4}, we can see that \eqref{3.3-2} is valid. We now rearrange \eqref{3.3-2} to have
			\begin{equation*}
				\frac{\left| A^+(t,R)\right|}{|B_R|}\le \left( \frac{k}{\delta t}\right)^{\epsilon_0}
			\end{equation*}
			with $k=\inf\limits_{B_R}u+R^sg^{-1}\left( R^s\mathrm{Tail}(u_-;\xi_0,16R)\right)$. Then exploiting this display and Cavalieri's principle and through some standard calculations, we could deduce
			\begin{equation*}
				\begin{aligned}
					\left(
					\mint_{B_R}u^{\epsilon_0}d\xi
					\right) ^{\frac{1}{\epsilon_0}}
					&\le C\left(
					\inf_{B_R}u+R^sg^{-1}\left( R^s\mathrm{Tail}(u_-;\xi_0,16R)\right)
					\right)\\
					&\le C\left(
					\inf_{B_R}u+R^sg^{-1}\left( R^s\mathrm{Tail}(u_-;\xi_0,R)\right)
					\right),
				\end{aligned}
			\end{equation*}
			the details of which can be found for instance in \cite[Pages 4810-4811]{Cozzi}.
		\end{proof}

		\begin{lemma}
			\label{J}
			Suppose that $u\in H\mathbb W^{s,G}(\Omega)\cap L^g_s(\mathbb{H}^n)$ is a weak solution of Eq. \eqref{main} such that $u\ge 0$ in $B_R=B_R(\xi_0)\subset\subset\Omega$. Then there holds that
			\begin{equation*}
				R^sg^{-1}\left( R^s\mathrm{Tail}(u_+;\xi_0,R)\right)
				\le C\left( \sup_{B_R}u+R^sg^{-1}\left( R^s\mathrm{Tail}(u_-;\xi_0,R)\right)\right),
			\end{equation*}
			where $C>0$ depends on $n,p,q,s$.
		\end{lemma}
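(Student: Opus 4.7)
Set $M := \sup_{B_R} u \ge 0$. The plan is to apply the Caccioppoli-type estimate of Proposition \ref{A} to $u$ viewed as a \emph{supersolution} with truncation level $k := 2M$ and radii $\rho = R/2$, $r = R$, and then read off the positive tail from the LHS of that estimate. Since $0\le u \le M$ on $B_R$, this choice produces $w_- := (u-2M)_- = 2M - u \in [M,2M]$ inside $B_R$, while $w_+ := (u-2M)_+ \equiv 0$ on $B_R$. After discarding the nonnegative Gagliardo-type contribution and restricting the outer $\xi$-integral to $B_{R/2}$, Proposition \ref{A} collapses to
\begin{equation*}
\int_{B_{R/2}} w_-(\xi)\! \int_{\mathbb{H}^n \setminus B_R}\! g\!\left(\tfrac{w_+(\eta)}{|\eta^{-1}\circ\xi|^s_{\mathbb{H}^n}}\right)\tfrac{d\eta\,d\xi}{|\eta^{-1}\circ\xi|^{Q+s}_{\mathbb{H}^n}} \le C\!\int_{B_R}\! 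G\!\left(\tfrac{w_-}{R^s}\right)\! d\xi + C\|w_-\|_{L^1(B_R)}\,\mathrm{Tail}(w_-;\xi_0,R).
\end{equation*}

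For the LHS, the triangle inequality gives $|\eta^{-1}\circ\xi|_{\mathbb{H}^n} \le 2|\xi_0^{-1}\circ\eta|_{\mathbb{H}^n}$ whenever $\xi \in B_{R/2}$ and $\eta \notin B_R$; combining this with the monotonicity of $g$, the growth \eqref{1-1}, and $w_- \ge M$ on $B_{R/2}$ yields the lower bound LHS $\ge c\,M\,|B_{R/2}|\,\mathrm{Tail}((u-2M)_+;\xi_0,R)$. For the RHS, a case analysis on the sign of $u(\eta)$ yields the pointwise bound $w_- \le 2M + u_-$ on all of $\mathbb{H}^n$; hence \eqref{1-5} together with the elementary inequality $g(at)\le a^{p-1}g(t)$ for $a\in(0,1)$ and the explicit integral $\int_{\mathbb{H}^n\setminus B_R}|\xi_0^{-1}\circ\eta|_{\mathbb{H}^n}^{-Q-sp}\,d\eta = CR^{-sp}$ deliver
\begin{equation*}
\mathrm{Tail}(w_-;\xi_0,R) \le C\,\mathrm{Tail}(u_-;\xi_0,R) + \tfrac{C}{R^s}g(M/R^s).
\end{equation*}
Using $w_- \le 2M$ on $B_R$ and the equivalence $M g(M/R^s)\sim R^s G(M/R^s)$ supplied by \eqref{02}, the RHS of the displayed Caccioppoli inequality is therefore dominated by $CG(M/R^s)|B_R| + CM|B_R|\,\mathrm{Tail}(u_-;\xi_0,R)$.

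Dividing through by $cM|B_{R/2}|$ gives $R^s\,\mathrm{Tail}((u-2M)_+;\xi_0,R) \le Cg(M/R^s) + CR^s\,\mathrm{Tail}(u_-;\xi_0,R)$. To transfer this from $(u-2M)_+$ to $u_+$, split $\mathbb{H}^n\setminus B_R$ into $\{u>2M\}$, where $u_+\le 2(u-2M)_+$ and hence $g(u_+/|\cdot|^s)\le 2^{q-1}g((u-2M)_+/|\cdot|^s)$ by \eqref{1-1}, and its complement, where $u_+\le 2M$ contributes at most $Cg(M/R^s)/R^s$ by the very same constant-tail computation used above. Combining and applying $g^{-1}$ (using the $\Delta_2$-type sublinearity $g^{-1}(a+b)\le C(g^{-1}(a)+g^{-1}(b))$ coming from \eqref{02}), then multiplying by $R^s$, produces the stated inequality. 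The degenerate case $M=0$ is handled by running the argument with $k = 2M+\varepsilon$ and letting $\varepsilon \to 0^+$. The main technical nuisance is the Heisenberg-norm comparison in the LHS lower bound and the careful bookkeeping of $\Delta_2$-constants across $g$, $G$, and $g^{-1}$; Proposition \ref{A} supplies all of the actual PDE input.
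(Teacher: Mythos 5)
Your proposal is correct and follows essentially the same route as the paper's proof: both test Proposition~\ref{A} (supersolution form) with $k=2\sup_{B_R}u$, $\rho=R/2$, $r=R$, extract the positive tail from the left-hand side via the Heisenberg-norm comparison $|\eta^{-1}\circ\xi|_{\mathbb H^n}\le 2|\xi_0^{-1}\circ\eta|_{\mathbb H^n}$ and the lower bound $w_-\ge M$ on $B_{R/2}$, estimate $\mathrm{Tail}(w_-;\xi_0,R)$ by $\mathrm{Tail}(u_-;\xi_0,R)$ plus a constant term, and then absorb the $G$ and $g$ terms using \eqref{02} before applying $g^{-1}$. The only cosmetic difference is the ordering of the transfer from $\mathrm{Tail}((u-2M)_+)$ to $\mathrm{Tail}(u_+)$: the paper performs it inside the left-hand-side lower bound using $g(u_+)\le C(g((u-k)_+)+g(k))$, whereas you do it as a separate step after dividing by $M$; these are equivalent.
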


		\begin{proof}
			Let $k=2\sup\limits_{B_R}u$ and $w_-=(u-k)_-$. We apply Proposition \ref{A} with $\rho=\frac{R}{2}$ and $r=R$ to get
			\begin{align}
				\label{3.4-1}
				&\ \quad\int_{B_{\frac{R}{2}}}w_-(\xi)\left[
				\int_{\mathbb{H}^n}g\left( \frac{w_+(\eta)}{|\eta^{-1}\circ \xi|^s_{\mathbb{H}^n}}\right)\frac{d\eta}{|\eta^{-1}\circ \xi|^{Q+s}_{\mathbb{H}^n}}
				\right] d\xi\nonumber\\
				&\le C\int_{B_{R}}G\left( \frac{w_-}{R^s}\right)d\xi
				+C||w_-||_{L^1(B_R)}\mathrm{Tail}(w_-;\xi_0,R).
			\end{align}
			Observe that $w_-\le k$ in $B_R$ by $u\ge 0$ in $B_R$. Thus we can see
			\begin{equation*}
				||w_-||_{L^1(B_R)}\le k|B_R|\quad \text{and}\quad \int_{B_{R}}G\left( \frac{w_-}{R^s}\right)d\xi\le G\left( \frac{k}{R^s}\right)|B_R|.
			\end{equation*}
			In addition, the tail is evaluated as
			\begin{equation*}
				\begin{aligned}
					\mathrm{Tail}(w_-;\xi_0,R)&\le \int_{\mathbb{H}^n\setminus B_R}g\left( \frac{u_-+k}{|\xi_0^{-1}\circ \xi|^s_{\mathbb{H}^n}}\right) \frac{d\xi}{|\xi_0^{-1}\circ \xi|^{Q+s}_{\mathbb{H}^n}}\\
					&\le C\int_{\mathbb{H}^n\setminus B_R}
					\left[
					g\left( \frac{u_-}{|\xi_0^{-1}\circ \xi|^s_{\mathbb{H}^n}}\right)+g\left( \frac{k}{|\xi_0^{-1}\circ \xi|^s_{\mathbb{H}^n}}\right)
					\right]
					\frac{d\xi}{|\xi_0^{-1}\circ \xi|^{Q+s}_{\mathbb{H}^n}}\\
					&\le C\mathrm{Tail}(u_-;\xi_0,R)+C\frac{1}{R^s}g\left( \frac{k}{R^s}\right).
				\end{aligned}
			\end{equation*}
			Thereby, the right-hand side of \eqref{3.4-1} could be controlled by
			\begin{equation*}
				C\left[
				G\left( \frac{k}{R^s}\right)+\frac{k}{R^s}g \left( \frac{k}{R^s}\right)+k\mathrm{Tail}((u_-;\xi_0,R))
				\right] \left| B_R\right| .
			\end{equation*}\par
			We in turn tackle the left-hand side of \eqref{3.4-1}. Note that for $\xi\in B_{\frac{R}{2}}(\xi_0)$ and $\eta\in \mathbb{H}^n\setminus B_R(\xi_0)$, there holds that $|\eta^{-1}\circ \xi|_{\mathbb{H}^n}\le |\eta^{-1}\circ \xi_0|_{\mathbb{H}^n}+|\xi_0^{-1}\circ \xi|_{\mathbb{H}^n}\le 2|\xi_0^{-1}\circ \eta|_{\mathbb{H}^n}$. Furthermore, by $k\ge 0$, we discover
			\begin{equation*}
				g(u_+)\le g\left( (u-k)_++k\right)\le C(p,q)\left( g\left( (u-k)_+\right)+g(k)\right).
			\end{equation*}
			Thanks to these two facts, we calculate
			\begin{equation*}
				\begin{aligned}
					&\ \quad\int_{B_{R}}w_-(\xi)\int_{\mathbb{H}^n}g\left( \frac{w_+(\eta)}{|\eta^{-1}\circ \xi|^s_{\mathbb{H}^n}}\right)\frac{d\eta}{|\eta^{-1}\circ \xi|^{Q+s}_{\mathbb{H}^n}}\,d\xi\\
					&\ge Ck\int_{B_{\frac{R}{2}}}\int_{\mathbb{H}^n\setminus B_R}g\left( \frac{w_+(\eta)}{|\xi_0^{-1}\circ \eta|^s_{\mathbb{H}^n}}\right) \frac{d\eta}{|\xi_0^{-1}\circ \eta|^{Q+s}_{\mathbb{H}^n}}\,d\xi\\
					&\ge Ck|B_R|\int_{\mathbb{H}^n\setminus B_R}\left[
					g\left( \frac{u_+}{|\xi_0^{-1}\circ \eta|^s_{\mathbb{H}^n}}\right)-g\left( \frac{k}{|\xi_0^{-1}\circ \eta|^s_{\mathbb{H}^n}}\right)
					\right] \frac{d\eta}{|\xi_0^{-1}\circ \eta|^{Q+s}_{\mathbb{H}^n}}\\
					&\ge Ck|B_R|\mathrm{Tail}(u_+;\xi_0,R)-C\frac{k}{R^s}g\left( \frac{k}{R^s}\right)|B_R|.
				\end{aligned}
			\end{equation*}
			In summary, we arrive at
			\begin{equation*}
				k\mathrm{Tail}(u_+;\xi_0,R)\le CG\left( \frac{k}{R^s}\right)+C\frac{k}{R^s}g\left( \frac{k}{R^s}\right)+Ck\mathrm{Tail}(u_-;\xi_0,R),
			\end{equation*}
			namely,
			\begin{equation*}
				\mathrm{Tail}(u_+;\xi_0,R)\le C\frac{1}{R^s}g\left( \frac{k}{R^s}\right)+C\mathrm{Tail}(u_-;\xi_0,R)
			\end{equation*}
			with $C$ depending only on $n,p,q,s$. Now following the computations in \cite[page 19]{FZ}, we deduce that
			\begin{equation*}
				R^sg^{-1}\left( R^s\mathrm{Tail}(u_+;\xi_0,R)\right)
				\le C\left( \sup_{B_R}u+R^sg^{-1}\left( R^s\mathrm{Tail}(u_-;\xi_0,R)\right)\right).
			\end{equation*}
We complete this proof now.
		\end{proof}

		In the end, we carry out the proof of the nonlocal Harnack estimate given in Theorem \ref{K}. For this purpose, we need the forthcoming result obtained by \cite{Ok}.

		\begin{lemma}
			\label{3.5}
			Assume that $F:\left[ 0,\infty\right)\rightarrow\left[ 0,\infty\right)  $ is a nondecreasing function and that the function $t\rightarrow\frac{F(t)}{t}$ is nonincreasing. Then we can find a concave function $\bar{F}:\left[ 0,\infty\right)\rightarrow\left[ 0,\infty\right)$ fulfilling
			\begin{equation*}
				\frac{1}{2}\bar{F}(t)\le F(t)\le \bar{F}(t)\quad \text{for }t\ge 0.
			\end{equation*}
		\end{lemma}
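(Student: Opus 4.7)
My plan is to take $\bar F$ to be the least concave majorant of $F$, constructed pointwise via the concave envelope
\[
\bar F(t):=\sup\bigl\{\lambda F(a)+(1-\lambda)F(b):\lambda\in[0,1],\ a,b\ge 0,\ \lambda a+(1-\lambda)b=t\bigr\}.
\]
First I would check that this supremum is finite. Since $s\mapsto F(s)/s$ is nonincreasing, $F(s)\le F(1)\,s$ for $s\ge 1$, so $F$ has at most linear growth at infinity; together with the monotonicity of $F$ this makes $F$ dominated by an affine function on the whole of $[0,\infty)$, so the family of concave majorants is nonempty. The standard one-dimensional description of the concave envelope (the upper boundary of the convex hull of the hypograph of $F$ is piecewise affine between contact points with $F$, so only two-point convex combinations are needed) then ensures that the $\bar F$ above is indeed concave and coincides with the least concave majorant. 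Taking $a=b=t$ in the supremum gives the lower estimate $\bar F(t)\ge F(t)$ at once.

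The core step is the reverse bound $\bar F\le 2F$. Fix $t>0$ and an admissible triple $(a,b,\lambda)$ with $\lambda a+(1-\lambda)b=t$; after relabeling assume $a\le b$, so that $a\le t\le b$. The monotonicity of $F$ yields $F(a)\le F(t)$, while the monotonicity of $s\mapsto F(s)/s$ yields $F(b)\le (b/t)F(t)$. Substituting and using the identity $(1-\lambda)b=t-\lambda a$,
\[
\lambda F(a)+(1-\lambda)F(b)\le F(t)\,\frac{\lambda t+(1-\lambda)b}{t}=F(t)\,\frac{t(1+\lambda)-\lambda a}{t}\le 2F(t),
\]
since $\lambda\le 1$ and $a\ge 0$. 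Taking the supremum over all admissible $(a,b,\lambda)$ gives $\bar F(t)\le 2F(t)$, and the boundary case is handled by setting $\bar F(0):=F(0)$.

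I anticipate no serious obstacle. The only mildly delicate point is the justification that two-point convex combinations suffice to attain the concave envelope in one variable; this is a classical fact of convex analysis (every point of the upper boundary of a planar convex hull lies on a line segment between two points of the set) and can be invoked as a black box.
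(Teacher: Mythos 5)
Your proof is correct. The paper does not prove this lemma at all---it simply cites \cite{Ok}---so there is no in-paper argument to compare against, but your construction via the least concave majorant with the two-point convex-combination estimate is the standard route to this fact, and every step (finiteness from the affine bound $F(1)(1+t)$, the lower bound from $a=b=t$, the upper bound $\bar F\le 2F$ via the two monotonicities and the identity $(1-\lambda)b=t-\lambda a$) checks out.
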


	\noindent	\textbf{Proof of Theorem \ref{K}}.
			Fix any $\eta\in B_R:=B_R(\xi_0)$ and $r\in \left( 0,2R\right] $. Then the local boundedness result, Theorem \ref{C}, gives directly
			\begin{equation*}
				\sup_{B_{r}(\eta)}u\le C(2r)^sG^{-1}\left( \delta^\gamma\mint_{B_{2r}}G\left( \frac{u}{(2r)^s}\right)d\xi \right)+(2r)^sg^{-1}\left( \delta r^s\mathrm{Tail}(u_+;\eta,r)\right),
			\end{equation*}
			where $\gamma=\frac{\theta}{1-\theta}<0$ for $\theta>1$ given by Lemma \ref{B}. Observe that, for $t\ge 0$,
			\begin{equation*}
				G^{-1}(at)\le \left( \frac{q}{p}\right)^{\frac{1}{p}}a^{\frac{1}{p}}G^{-1}(t) \quad \text{with }a\ge 1
			\end{equation*}
			and
			\begin{equation*}
				g^{-1}(at)\le \left( \frac{q}{p}\right)^{\frac{1}{p-1}}a^{\frac{1}{q-1}}g^{-1}(t) \quad \text{with }0<a\le 1.
			\end{equation*}
			Taking these facts and the tail estimate, Lemma \ref{J}, into account, we obtain
			\begin{equation*}
				\begin{aligned}
					\sup_{B_{r}(\eta)}u&\le C\delta^{\frac{\gamma}{p}}(2r)^sG^{-1}\left( \mint_{B_{2r}}G\left( \frac{u}{(2r)^s}\right)d\xi\right)  +C\delta^\frac{1}{q-1}r^sg^{-1}\left( r^s\mathrm{Tail}(u_+;\eta,r)\right)\\
					&\le C\delta^{\frac{\gamma}{p}}(2r)^sG^{-1}\left( \mint_{B_{2r}}G\left( \frac{u}{(2r)^s}\right)d\xi\right) \\
					&\quad+C\delta^\frac{1}{q-1}
					\left( \sup_{B_{r}(\eta)}u+r^sg^{-1}\left( r^s\mathrm{Tail}(u_-;\eta,r)\right)\right).
				\end{aligned}
			\end{equation*}
			Via taking such a small $\delta>0$ that $C\delta^{\frac{1}{q-1}}\le \frac{1}{2}$, the last display becomes
			\begin{equation*}
				\sup_{B_{r}(\eta)}u\le C\delta^{\frac{\gamma}{p}}(2r)^sG^{-1}\left( \mint_{B_{2r}}G\left( \frac{u}{(2r)^s}\right)d\xi\right)+Cr^sg^{-1}\left( r^s\mathrm{Tail}(u_-;\eta,r)\right).
			\end{equation*}
			Thanks to Lemma \ref{3.5}, we could find a concave function $\bar{G}(t)$ fulfilling $\bar{G}(t)\approx G(t^\frac{1}{q})$. Then an application of Jensen's inequality with $\bar{G}^{-1}(t)\approx\left( G^{-1}(t)\right)^q $ to the previous inequality indicates
			\begin{equation*}
				\sup_{B_r(\eta)}u\le C\delta^\frac{\gamma}{p}\left( \mint_{B_{2r}}u^q\,d\xi\right)^\frac{1}{q}+Cr^sg^{-1}\left( r^s\mathrm{Tail}(u_-;\eta,r)\right).
			\end{equation*}
			For the constant $\epsilon_0\in (0,1)$ from Lemma \ref{I}, employing Young's inequality we derive
			\begin{equation*}
				\begin{aligned}
					\left( \mint_{B_{2r}}u^q\,d\xi\right)^{\frac{1}{q}}&\le \left( \sup_{B_{2r}}u\right)^{\frac{q-\epsilon_0}{q}}\left( \mint_{B_{2r}}u^{\epsilon_0} \,d\xi\right)^{\frac{1}{q}}\\
					&\le \delta_1\sup_{B_{2r}}u+C(\delta_1)\left( \mint_{B_{2r}}u^{\epsilon_0}d\xi\right)^{\frac{1}{\epsilon_0}}.
				\end{aligned}
			\end{equation*}
			Thus, via further picking $\delta_1\in (0,1)$ so small that $C\delta^\frac{\gamma}{p}\delta_1=\frac{1}{2}$, the combination of two inequalities above yields
			\begin{equation*}
				\sup_{B_{r}(\eta)}u\le \frac{1}{2}\sup_{B_{2r}(\eta)}u+C\left( \mint_{B_{2r}(\eta)}u^{\epsilon_0} \,d\xi\right)^{\frac{1}{\epsilon_0}}+Cr^sg^{-1}\left( r^s\mathrm{Tail}(u_-;\eta,r)\right).
			\end{equation*}\par
			Now we would like to make use of the iteration tool, Lemma \ref{2.7Lemma}. Set $1\le \sigma<\tau\le 2$ and $r=(\tau-\sigma)R$. We conclude through a covering argument that
			\begin{equation*}
				\sup_{B_{\sigma R}}u\le \frac{1}{2}\sup_{B_{\tau R}}u
				+\frac{C}{(\tau-\sigma)^{\frac{Q}{\epsilon_0}}}\left( \mint_{B_{2R}}u^{\epsilon_0} \,d\xi\right)^\frac{1}{\epsilon_0}
				+CR^sg^{-1}\left( R^s\mathrm{Tail}(u_-;\xi_0,R)\right).
			\end{equation*}
			Therefore, apply Lemma \ref{2.7Lemma} to deduce
			\begin{equation*}
				\begin{aligned}
					\sup_{B_{R}}u&\le C\left( \frac{Q}{\epsilon_0}\right)\left[
					C\left( \mint_{B_{2R}}u^{\epsilon_0} \,d\xi\right)^\frac{1}{\epsilon_0}+CR^sg^{-1}\left( R^s\mathrm{Tail}(u_-;\xi_0,R)\right)
					\right] \\
					&\le C\left( \inf_{B_{R}}u+R^sg^{-1}\left( R^s\mathrm{Tail}(u_-;\xi_0,R)\right)\right),
				\end{aligned}
			\end{equation*}
			where in the last line Lemma \ref{I} has been used and the constant $C>0$ depends only on $n, p, q, s$.

	\section*{Data availability}
	No data was used for the research described in the article.
	
	\section*{Acknowledgments}
This work was supported by the National Natural Science Foundation of China (No. 12071098), the China Postdoctoral Science Foundation (No. 2023M730875) and the Postdoctoral Science Foundation of Heilongjiang Province (No. LBH-Z22177).

\end{document}